\newtheorem{thm}{Theorem}[section]
\newtheorem{remark}[thm]{Remark}
\newtheorem{lemma}[thm]{Lemma}
\newtheorem{proposition}[thm]{Proposition}
\newtheorem{definition}[thm]{Definition}
\newtheorem{theorem}[thm]{Theorem}
\newtheorem{corollary}[thm]{Corollary}
\newtheorem{example}{Example}
\newcommand{\mZ}{\mathbb{Z}}
\newcommand{\mL}{\mathbb{L}}
\newcommand{\mE}{\mathbb{E}}
\newcommand{\mr}[1]{\ensuremath{\mathrm{#1}}}
\newcommand{\sA}{\s{A}}
\newcommand{\sU}{\s{U}}
\newcommand{\sH}{\s{H}}
\newcommand{\sL}{\s{L}}
\newcommand{\sV}{\s{V}}
\newcommand{\sO}{\s{O}}
\newcommand{\sF}{\s{F}}
\newcommand{\sG}{\s{G}}
\newcommand{\sP}{\s{P}}
\newcommand{\sM}{\s{M}}
\newcommand{\sE}{\s{E}}
\newcommand{\sC}{\s{C}}
\newcommand{\s}[1]{\ensuremath{\mathcal{#1}}}
\newcommand{\spec}{\operatorname{Spec}}
\newcommand{\bderive}[1]{\ensuremath{\mathrm{D}^{\mathrm{b}}(#1)}}
\newcommand{\sdHom}{R\mathcal{H}om}
\newcommand{\sMab}{\ensuremath{\sM^{[a,b]}_{\bderive{X}}}}
\newcommand{\sMtor}{\ensuremath{\sM_{\bderive{X}}}}
\newcommand{\bderivefl}[1]{\ensuremath{\mathrm{D}^{\mathrm{b}}_{fl,f}(#1)}}
\newcommand{\bderiveflpi}[1]{\ensuremath{\mathrm{D}^{\mathrm{b}}_{fl,\pi}(#1)}}
\newcommand{\bderiveflab}[3]{\ensuremath{\mathrm{D}^{[#2, #3]}_{fl, \pi}(#1)}} 
\newcommand{\Vect}{\ensuremath{\mr{\bf{Vect}}}}
\newcommand{\QC}{\ensuremath{\operatorname{\mr{QCoh}}}}
\newcommand{\Map}{\ensuremath{Map}}
\newcommand{\kalg}{k\textrm{-Alg}_{\Delta}}
\newcommand{\Morph}{M\mr{orph}}
\newcommand{\Aalg}{A\textrm{-Alg}}
\newcommand{\Amod}{A\textrm{-mod}}
\newcommand{\kmod}{k\textrm{-mod}_{\Delta}}
\newcommand{\cofib}{\ensuremath{\operatorname{\mr{cofib}}}}
\newcommand{\fib}{\ensuremath{\operatorname{\mr{fib}}}}
\newcommand{\sset}{\ensuremath{\mr{Set}_\Delta}}
\newcommand{\Perf}{\ensuremath{Perf}}
\newcommand{\rank}{\ensuremath{\operatorname{rank}}}
\newcommand{\QCF}{\ensuremath{\widetilde{\QC(X)}}}
\newcommand{\sMabone}{\ensuremath{\sM^{[a,b+1]}_{\bderive{X}}}}
\newcommand{\mV}{\mathbb{V}}
\newcommand{\mI}{\mathbb{I}}
\newcommand{\pscoh}{\mr{PsCoh}}
\newcommand{\Kalg}{K\textrm{-Alg}}
\newcommand{\Kmod}{K\textrm{-mod}}
\newcommand{\kAlg}{k\textrm{-Alg}}
\newcommand{\Balg}{B\textrm{-Alg}}
\author{Parker E. Lowrey}
\title{The moduli stack and motivic Hall algebra for the bounded derived category.}
\begin{document}

\maketitle
\abstract{
We give an alternate formulation of pseudo-coherence over an arbitrary derived stack $X$.  The full subcategory of pseudo-coherent objects forms a stable sub-$\infty$-category of the derived category associated to $X$.  Using relative Tor-amplitude we define a derived stack $\sMtor$ classifying pseudo-coherent objects.  For reasonable base schemes, this classifies the bounded derived category. In the case that $X$ is a projective derived scheme flat over the base, we show $\sMtor$ is locally geometric and locally of almost finite type.  Using this result, we prove the existence of a derived motivic Hall algebra associated to $X$.  
}






\section{Introduction}

The derived category was first introduced as a tool to study derived functors (and their compositions); this framework was highly effective in unifying topological and algebraic invariants under the same guise, e.g., Tor, singular cohomology.  Since its inception, the derived category has moved from being a tool, to being an invariant itself.  As an invariant it is still largely mysterious.  Technology for extracting more concise invariants from the derived category includes constructing ``universal'' invariants like K-theory and more local invariants via performing intersection theory on moduli of nicely behaving objects, e.g, DT and PT invariants.
This latter technology has been restricted to objects with no negative $\mr{Ext}$ groups.  A first step to extending these techniques to a broader class of objects is to create nicely behaving moduli spaces that incorporate objects with non-trivial negative $\mr{Ext}$ groups.  If $X$ is smooth To\"en and Vaqui\'e \cite{dg_moduli} show that one can indeed form a reasonable geometric space classifying the bounded derived category.  This paper shows that the same is true for any scheme satisfying projectivity conditions (and should be true for proper varieties as well).

The recent developments that allow us to address objects with higher automorphisms come from applications of homotopical algebra to sheaf theory.  These connections were originally established by homotopy theorists studying the lack of descent in algebraic K-theory. Starting with Jardine \cite{Jardine} and culminating in the work of To\"en, Vessozi, and Lurie \cite{HAGII, topoi, DAGV} 
the context which to build our moduli spaces is now well developed.  In particular, we will be employing  $\infty$-categories to work with homotopy coherence in absence of a model structure on the derived categories of interest and higher derived stacks to describe our moduli spaces.  The higher stacks will be used to describe the higher automorphisms of our objects and the ``derived'' data will ensure we incorporate the natural infinitesimal information included in the derived category.  




To be more specific about the problem at hand, suppose $X$ is a smooth quasi-separated quasi-compact scheme.  One has no hope of finding a well behaved moduli stack that classifies unbounded chain complexes of quasi-coherent sheaves.  To get a space of finite type, one needs to impose finiteness conditions on the objects classified.  As shown in \cite{dg_moduli} and \cite{Pranav}, if one imposes dualizability (a strong finiteness condition), then the moduli problem is indeed a locally geometric locally finite type stack, which is the nicest possible result that one can (and should) expect. In the case that $X$ is smooth, dualizability is equivalent to most other reasonable finiteness conditions, e.g., restricting to bounded finite type modules, compactness.  This statement is no different than the common correspondence between homological and cohomological data on smooth manifolds.  When $X$ is singular, certain contexts yield dualizability insufficient to study $X$.  For example, skyscraper sheaves supported on the singular locus will not be dualizable.

The correct notion for singular $X$ is an extension of ``homological finiteness'' to the derived stack setting via the notion of {\it{pseudo-coherent}} objects.  Classically, these notions were developed by Illusie in \cite{SGA6} to fix stability issues with the bounded derived category for arbitrarily pathological topoi. If one assumes $X$ is a Noetherian scheme, the category of pseudo-coherent objects is equivalent to the category of bounded above chain complexes of vector bundles.  Brutal truncations of these objects are then perfect.  This aspect was central to the nice results obtained by Thomason \cite{TT}.  In the derived setting pseudo-coherence also goes by the moniker of ``almost perfect'' and appears in Lurie's thesis \cite{lurie_thesis}.  We give an alternate (equivalent) definition that allows for the easy porting of the works of \cite{SGA6} and \cite{TT}.  The motivation for this difference is the fact that in the derived setting we no longer have easily describable brutal truncations;  simplicially these correspond to skeleton functors, which if carried out as is will not preserve module structures.   Therefore, we force a brutal truncation locally via the idea of filtered objects \cite{DAGI}.  Porting over many of the proofs from \cite{SGA6} and \cite{TT} is then straightforward.

Depending on how close $X$ is to a reasonable scheme, one can simplify this definition to a nice categorical concept.  The importance of a nice categorical definition lies in the ease at which it allows one to create and study functorial constructions.  For example, the aforementioned works on the moduli of perfect objects necessarily depend on the well behaved categorical concept of compactness.  We have the following derived analog of \cite[2.4.2]{TT} when $X$ is perfect (a broad class of nicely behaving derived stacks, as defined in \cite{BZ}).
\begin{proposition}
  If $X$ is perfect and quasi-compact, then $F \in \QC(X)$ is pseudo-coherent if and only if the $n$th truncation is compact with respect to $n$-truncated colimits.
\end{proposition}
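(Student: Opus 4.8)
The plan is to reduce the statement to a purely $t$-structural fact about the compactly generated $\infty$-category $\QC(X)$ together with the defining feature of a perfect stack, namely the identification $\QC(X) \simeq \mr{Ind}(\Perf(X))$ in which the compact objects are exactly the perfect (dualizable) ones. Write $\tau_{\le n} \colon \QC(X) \to \QC(X)_{\le n}$ for the truncation onto the full subcategory of $n$-truncated objects and $\iota$ for its right adjoint, the inclusion. First I would record two structural facts: (i) a filtered colimit of $n$-truncated objects is again $n$-truncated, so $\iota$ preserves filtered colimits, and consequently its left adjoint $\tau_{\le n}$ preserves compact objects; and (ii) since $\QC(X)$ is generated under colimits by $\Perf(X)$, the subcategory $\QC(X)_{\le n}$ is generated under $n$-truncated colimits by the set $\{\tau_{\le n} P : P \in \Perf(X)\}$, each member of which is compact by (i). Thus $\QC(X)_{\le n}$ is compactly generated, and its compact objects are precisely the retracts of finite colimits (formed in $\QC(X)_{\le n}$) of truncated perfect complexes.

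For the forward implication, suppose $F$ is pseudo-coherent. By the approximation property built into the definition, for each $n$ there is a perfect complex $P$ and a map $P \to F$ whose fiber is sufficiently highly connective that $\tau_{\le n} P \xrightarrow{\sim} \tau_{\le n} F$. Applying $\tau_{\le n}$ yields this equivalence, and since $P$ is compact in $\QC(X)$, fact (i) shows $\tau_{\le n} P$, and hence $\tau_{\le n} F$, is compact in $\QC(X)_{\le n}$. As this holds for every $n$, one direction is complete.

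For the converse, I would assume $\tau_{\le n} F$ is compact in $\QC(X)_{\le n}$ for all $n$ and produce, for each $n$, a perfect approximation of $F$ through degree $n$. By the structural description above, $\tau_{\le n} F$ is a retract of a finite colimit $Q_n$ of objects $\tau_{\le n} P_i$ with $P_i \in \Perf(X)$; the finite colimit $P := \operatorname*{colim} P_i$ taken in $\QC(X)$ is perfect and admits a comparison $\tau_{\le n} P \to \tau_{\le n} F$. The point is to promote this to an honest map $P' \to F$ in $\QC(X)$ exhibiting $P'$ as an $n$-approximation, i.e. with $\tau_{\le n}(\fib)$ suitably connective. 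The adjunction supplies $\Map_{\QC(X)}(P, \iota\, \tau_{\le n} F) \simeq \Map_{\QC(X)_{\le n}}(\tau_{\le n} P, \tau_{\le n} F)$, so the truncated map lifts to $P \to F$ once the obstruction living in the higher homotopy of $\fib(F \to \tau_{\le n} F)$ is killed, which can be arranged by increasing $n$ since that fiber is highly connective. Splicing these approximations together, and invoking boundedness as part of pseudo-coherence, recovers the paper's definition.

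The main obstacle is precisely this lifting step in the converse: truncation is neither exact nor fully faithful, so an equivalence at the level of $\tau_{\le n}$ need not come from a map of the underlying objects, and one must balance the connectivity of $\fib(F \to \tau_{\le n} F)$ against the coconnectivity of the candidate perfect complex to annihilate the resulting obstruction classes. I expect to handle this by the standard inductive construction of perfect approximations, correcting homotopy one degree at a time, carried out globally through the $\mr{Ind}(\Perf(X))$ presentation rather than on a local chart, with the perfect-stack hypothesis guaranteeing that finitely many perfect generators suffice at each stage.
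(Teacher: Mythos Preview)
Your forward direction contains a gap: you assert that pseudo-coherence gives, for each $n$, a \emph{global} perfect $P$ with $\tau_{\le n}P \simeq \tau_{\le n}F$. In the paper's definition pseudo-coherence is only \emph{local} (there is an affine cover on which $F$ is strict pseudo-coherent), and the existence of a global perfect approximation is exactly the content of Proposition~\ref{aligned} together with Corollary~\ref{spcspcalt}. This is why the version of the statement actually proved in the body carries the extra hypothesis of an ample sequence. Without invoking that, your structural fact (i) is fine but the input ``$\tau_{\le n}P \simeq \tau_{\le n}F$ for some perfect $P$'' is not available.

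For the converse you correctly extract that $\tau^k F$ is a retract of the truncation of a perfect object; the paper does this a bit more directly by writing $F \cong \varinjlim P_\alpha$ (perfect stack), truncating, and using compactness to split off from a single $\tau^k P_\alpha$. The real divergence is what happens next. You try to lift the truncated approximation to a genuine map $P' \to F$ \emph{globally}, and you are right that this is where the obstruction lives; your sketch of ``balancing connectivity against coconnectivity'' does not obviously close because you have no global bound on the Tor-amplitude of the perfect pieces you produce. The paper avoids this entirely: since pseudo-coherence is a local property, it passes to an affine cover and lifts there. On an affine, each filtration piece $F_k(j)$ is perfect with Tor-amplitude bounded below by $-j$, so $\mathrm{Hom}(F_k(j),\tau_{k-1}F[1])=0$ and the map $F_k(j)\to\tau^k F$ factors through $F$ for free. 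Corollary~\ref{tiein} then finishes. In short, the paper trades your global $\mathrm{Ind}(\Perf(X))$ obstruction-theory argument for a trivial local lifting plus the locality of pseudo-coherence; this is both simpler and actually complete.
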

\noindent If $X$ is not perfect, then there is no expectation that dualizable objects are compact and the resulting moduli space will miss important objects, e.g., the structure sheaf.

Once we introduce and briefly discuss the definition of pseudo-coherence we are able to define the main object of study in this paper: the moduli stack of pseudo-coherent objects with finite relative Tor-amplitude (over the base).  Our main theorem is 
\begin{theorem}
  Let $X$ be a derived projective variety flat over a commutative simplicial ring $k$.  Then the moduli stack classifying the bounded derived category, $\sM_{\bderive{X}}$, is a locally geometric stack almost of finite type.
\end{theorem}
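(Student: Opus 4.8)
The plan is to reduce the global assertion to a bounded, finite-length problem and then to establish geometricity by induction on the amplitude. Since both ``locally geometric'' and ``almost of finite type'' are local properties, I would first exhibit $\sMtor$ as a filtered union of the substacks $\sMab$ indexed by relative Tor-amplitude intervals $[a,b]$. Finiteness of relative Tor-amplitude guarantees that every object of $\sMtor$ lies in some such interval, and the condition of having relative Tor-amplitude contained in $[a,b]$ cuts out an open substack (for a pseudo-coherent family the locus where the fibrewise Tor-amplitude is bounded by $b$ is open, and stable under the relevant base change). Hence the $\sMab$ form an open cover of $\sMtor$ by open immersions, and it suffices to prove that each individual $\sMab$ is geometric and almost of finite type.

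Second, I fix an interval and induct on its length $n = b-a$. In the base case the objects classified are concentrated, up to shift, in a single cohomological degree, so they are shifts of flat sheaves; over a projective $X$ their moduli $\sMu{a}{a}$ is governed by a Quot-type construction and coincides with a moduli of coherent sheaves, which is known to be geometric and almost of finite type. This is precisely where the hypotheses enter: the ample line bundle furnished by projectivity, together with flatness over $k$, yields the Serre-vanishing and boundedness statements that make the relevant families bounded and ensure that pushforward along $X \to \spec k$ preserves the finiteness we require. I would invoke the established moduli-of-sheaves results here, equivalently the single-degree case of the To\"en--Vaqui\'e moduli of perfect objects.

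Third, for the inductive step I relate $\sMab$ to the lower stratum $\sMac$ by peeling off the top layer. A pseudo-coherent object $F$ of amplitude $[a,b]$ sits in a fiber sequence whose outer terms are an object of amplitude $[a,b-1]$ and a flat sheaf placed in degree $b$, glued by an extension class living in a mapping space assembled from $\sdHom$. I would therefore realize $\sMab$ as a derived total space over $\sMac \times \sMu{b}{b}$ whose fibres are the spaces of such gluing data. Because $X \to \spec k$ is proper, the relevant relative $\sdHom$ between the two layers pushes forward to an almost perfect complex on the base, so these fibres are themselves geometric and almost of finite type; since geometricity and the almost-finite-type condition are stable under forming such relative total spaces and products, the inductive step closes.

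The main obstacle will be this third step: producing the fibration functorially and proving the gluing-data stacks are geometric almost of finite type. Concretely this rests on showing that the relative $\sdHom$ between the amplitude-$[a,b-1]$ layer and the top flat sheaf becomes almost perfect after pushforward, which is exactly the input supplied by properness and projectivity of $X/k$ together with pseudo-coherence. Equivalently, one may bypass the explicit fibration and appeal to the Artin--Lurie representability theorem: verify that the functor is a stack for the \'etale topology, is nilcomplete and infinitesimally cohesive, and admits a cotangent complex. The crux is then the almost-perfectness of that cotangent complex, which reduces to perfectness of $\sdHom(F,F)$ on the base and again uses only properness of $X/k$ and the bounded-amplitude hypothesis.
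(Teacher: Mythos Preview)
Your top-level architecture matches the paper: cover $\sMtor$ by the Zariski-open substacks $\sMab$ and prove each is geometric by induction on $b-a$, with the base case reducing to the known moduli of flat coherent sheaves on a projective scheme.

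The gaps are in the inductive step. First, the fiber sequence you assert---an object $F$ of amplitude $[a,b]$ as an extension of an amplitude-$[a,b-1]$ object by a flat sheaf in degree $b$---does not exist as stated: the top cohomology $H^b(F)$ is generally not flat over the base, so the naive truncation triangle fails to have the claimed amplitudes. The paper's fix is to use the ample bundle $L$: for $n\gg 0$ one produces a map from $\bigoplus^N L^{\otimes n}$ (a vector bundle, not an arbitrary flat sheaf) that is surjective on top cohomology, and it is the cofiber of \emph{this} map that drops amplitude. Showing such a resolution exists Zariski-locally on the base is exactly where projectivity enters the inductive step, not only the base case as you suggest. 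Second, the stack of such extension data is not $\sMab$ itself---many presentations yield the same $F$---so you are constructing a smooth atlas, not ``realizing $\sMab$ as a total space.'' This forces a separate verification that the diagonal is representable (the paper handles it via corepresentability of $f_\ast\sdHom(M,N\otimes f^\ast -)$ by a pseudo-coherent object on the base), and the atlas map is only smooth after restricting to the open locus $\sL_{ab}$ where $R\mathrm{Hom}(\sV[-1],\sG)$ sits in nonpositive degrees; without that restriction the corepresenting object for the fiber acquires Tor-amplitude in negative degrees and smoothness fails. Your Artin--Lurie alternative is legitimate and the paper explicitly acknowledges it as a road not taken, but the explicit-atlas route you begin with needs the ingredients above to close.
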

\noindent Almost finite type means finite type if one restricts to the category of $n$-truncated simplicial $k$-algebras for any $n \in \mathbb{N}$.  We remark that this result should hold over connective $\mE_\infty$-ring spectrum as well and will probably show up in future revisions of this paper.
This theorem covers all projective schemes over an arbitrary base field. In cases of overlap with \cite{dg_moduli, Pranav}, the moduli stack of compact/perfect objects studied in those works will be a substack of our $\sM_{\bderive{X}}$; it will be an equivalence if $X$ is smooth.  

In the case that $k$ satisfies additional finiteness conditions, e.g., $\pi_0(X)$ is Noetherian with finite-type homotopy groups and $n$-truncated for some $n$, a pseudo-coherent object with bounded relative Tor-amplitude is the same as a chain complex of quasi-coherent sheaves with a finite number of non-zero cohomology groups, all finite type.  It is known that these objects can be considered as compact objects in the category Ind-coh$(X)$.  However, this does not allow the application of \cite{Pranav} since this category is not locally compact.  The lack of local compactness is the primary reason our stacks are almost finite type, not finite type.

The general ideas of how we prove the moduli functor classifying pseudo-coherent objects is a geometric (algebraic) stack is fairly standard.  Generically speaking, there exists two methods to prove that a moduli functor is a stack.  The first is by explicitly verifying the diagonal is representable and constructing a smooth atlas.  The second involves verifying the Artin conditions (or the Artin-Lurie conditions in our case).  We have chosen the first due to the explicit understanding it can give to the moduli problem.  The drawback to this approach lies in the limitation of its application.   This is already evident when working objects that ``should'' belong to abelian subcategories: there is overlap between the cases handled in \cite{lieblich} and \cite{algebraic_stacks}, the former covering a far larger class of morphisms.


Using To\"en's Grothendieck ring of higher $n$-stacks and a similiar process given in \cite[Section 3.3]{toen_overview}, we use the above theorem to show
\begin{theorem}
  Let $X$ be a projective scheme flat over a commutative ring $k$. Then there is a motivic Hall algebra $\sH(\sMtor)$ associated to $X$.
\end{theorem}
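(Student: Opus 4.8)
The plan is to realize $\sH(\sMtor)$ as the relative Grothendieck ring of higher stacks over the moduli stack $\sMtor$, following the template of Joyce and Bridgeland for abelian categories but transported to the derived, higher-stack setting through To\"en's Grothendieck ring of higher $n$-stacks as in \cite[Section 3.3]{toen_overview}. The underlying module will be $K(\mr{St}/\sMtor)$, generated by equivalence classes of morphisms $[T \to \sMtor]$ with $T$ a higher stack of finite type, subject to the usual fibration and stratification relations. The first step is to invoke the main theorem, which applies since an ordinary projective scheme flat over a commutative ring is a special case of its hypotheses, to guarantee that $\sMtor$ is locally geometric and locally almost of finite type, so that these generators make sense. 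Because To\"en's ring is built from stacks of genuine finite type, I would carry out the entire construction one truncation at a time: restricting to $n$-truncated simplicial $k$-algebras, $\sMtor$ becomes of finite type, and the product defined at each level must be checked to be compatible with the truncation functors.

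The second, and most substantial, step is to produce the moduli stack of two-step filtered objects, which I will denote $\sM^{(2)}_{\bderive{X}}$, parametrizing filtrations $F_1 \hookrightarrow F_2$ of pseudo-coherent objects of bounded relative Tor-amplitude (equivalently the distinguished triangles $F_1 \to F_2 \to F_2/F_1$). This is precisely the moduli problem attached to filtered pseudo-coherent objects, so the filtered-object technology already used to define pseudo-coherence, together with the geometricity argument of the main theorem, should show that $\sM^{(2)}_{\bderive{X}}$ is again locally geometric and locally almost of finite type. It carries three evaluation morphisms $s, m, q \colon \sM^{(2)}_{\bderive{X}} \to \sMtor$ remembering the subobject $F_1$, the total object $F_2$, and the quotient $F_2/F_1$. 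The crucial point is that $(s,q)\colon \sM^{(2)}_{\bderive{X}} \to \sMtor \times \sMtor$ is of finite type after truncation: its fiber over a pair $(A,C)$ is the space of extensions, governed by the mapping space $\Map(C, A[1])$, and because $A$ and $C$ are pseudo-coherent of bounded Tor-amplitude the sheaf $\sdHom(C,A)$ has bounded coherent cohomology, so by projectivity and flatness of $X$ over $k$ the global mapping complex is perfect over $k$ and the extension space is represented by a finite-type linear derived stack. This finiteness is what keeps the convolution inside the Grothendieck ring.

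With these ingredients the product is defined by the convolution $f \star g = m_! \, (s,q)^*(f \boxtimes g)$: the pullback along $(s,q)$ preserves finite type by the previous step, while $m_!$ is simply post-composition in $K(\mr{St}/\sMtor)$. The unit is the class $[\spec k \xrightarrow{0} \sMtor]$ of the zero object. Associativity I would deduce from the moduli stack $\sM^{(3)}_{\bderive{X}}$ of three-step filtrations and the two ways of decomposing it, that is, from the $2$-Segal (Waldhausen $S_\bullet$) structure on the simplicial stack of filtered objects, which yields a cartesian comparison square intertwining the two iterated products. The main obstacle I anticipate is the combination of the second step with the truncation bookkeeping: establishing geometricity and, above all, the finite-type property of the sub-quotient map for filtered \emph{pseudo-coherent} objects with only bounded Tor-amplitude, while simultaneously checking that the per-truncation products and the Segal comparison are compatible with the truncation functors so that they assemble into a single well-defined associative product on $\sH(\sMtor)$.
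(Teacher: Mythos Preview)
Your overall architecture is right and matches the paper: build the convolution product from the moduli of short exact sequences (the paper's $\sE_2 \cong \sM_{\widetilde{\bderive{X}}^{\Delta_1,\flat}}$), define the product as pushforward along the ``total object'' map composed with pullback along the ``(sub, quotient)'' map, and prove associativity via the $2$-Segal/Waldhausen cube at level $\sE_3$. The paper writes this as $t_!\circ (s\times\cofib)^\ast$, which is exactly your $m_!\,(s,q)^\ast$, and associativity is the base-change identity applied to the two Cartesian squares in its diagram~(\ref{itcommutes}).

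There is, however, a genuine gap. To\"en's Grothendieck ring of higher $n$-stacks is not built out of arbitrary finite-type higher stacks; it is built out of \emph{locally special} ones, meaning that at every field-valued point the higher homotopy group sheaves $\pi_i$ for $i\ge 1$ are affine group schemes, and are unipotent for $i\ge 2$. Without this, the scissor relations do not make sense for $n>1$ and there is no ring to land in. You never mention, let alone verify, specialness for $\sMtor$, $\sM^{(2)}$, or $\sM^{(3)}$. The paper isolates this as Proposition~\ref{so_special}(1),(2),(6), and the proof is not formal: it uses that the loop stack at a point is a Zariski open in the linear stack $\mV_{\sF,\sF}$ corepresented by a pseudo-coherent $Q$ (Propositions~\ref{corepresentable}, \ref{geometric_stack_rep}), hence an affine stack, and then invokes \cite[Theorem~2.4.5]{champs_affine} to get the unipotence of the higher $\pi_i$. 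Your argument that the extension fibers are ``finite-type linear derived stacks'' is close to the input needed here, but it is being used for the wrong purpose (finiteness of $(s,q)$) rather than for specialness, and incidentally the corepresenting object is only pseudo-coherent, not perfect (Lemma~\ref{itpsuedo}), which is precisely why one only gets \emph{almost} finite type.

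A second, smaller point: your plan to work ``one truncation at a time'' and then assemble is both harder and unnecessary. The paper simply restricts $\sMtor$ to the full subcategory of \emph{discrete} $k$-algebras at the outset; this underived truncation is already a locally geometric, genuinely finite-type higher stack, and all of the Hall-algebra diagrams are then diagrams of classical higher stacks where To\"en's ring applies directly. This avoids the compatibility bookkeeping you were worried about.
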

\noindent This extends the well known motivic Hall algebra structure as defined in Joyce \cite{Joyce_II} and Bridgeland \cite{Bridgeland_Hall}.  In the case that $X$ is smooth, this Hall algebra should be comparable to that defined in \cite{KS}.  

With the results of this project in hand, it is clear that it can be extended to a larger class of morphisms using the Artin-Lurie conditions.  The larger class has yet to be determined, but should be settled once the various notions of properness in derived algebraic geometry are studied.  Additional future work will include studying various compactifications of the moduli of perfect objects in the moduli of pseudo-coherent, akin to \cite{Oda}, cutting various substacks out of $\sMtor$ either via stability conditions or cohomological strictifying maps (that ideally would alter the Hall numbers).

\subsection{Acknowledgments}
The author would like to thank David Ben-Zvi for helpful suggestions on both the mathematics and the format, Bertrand To\"en for helpful suggestions in the early stages of this project, and Rick Jardine for answering various questions regarding simplicial presheaves.

\subsection{Notation}
\label{notation}
When convenient, for $X$ a stack, we will refer to $X \times \spec A$ as $X[A]$, and per the usual abuse, we often times will not distinguish between an element of $Q\in F(A)$ and the resulting morphism $Q: \spec A \to F$.

Given a (left, right, coCartesian, Cartesian, biCartesian) fibration $\sC$, we denote a choice of associated functor as $F_{\sC}$.  Lastly, we will let $\iota_X$ denote the natural counit of adjunction $\iota_X: t_0(X) \to X$.

\section{The parlance of derived algebraic geometry.}
\label{parlance}
Throughout this paper we will assume the reader is familiar with the notion of $\infty$-categories, model categories, and the various types of fibrations of $\infty$-categories studied in \cite{topoi}, e.g., right fibrations, left fibrations, Cartesian fibrations, and coCartesian fibrations. For each notion of fibration, there are straightening functors that convert the fibrations  to (sometimes contravariant) functors on the simplicial category associated to the fibration base.  The advantages to using fibrations will become clear in the latter half of this paper.  

As discussed in the introduction, the proper framework with which to treat moduli ``spaces'' of objects in a derived category is the notion of a geometric derived stack.  Good overviews of this subject include the introduction of \cite{BZ}.  The adjective ``derived'' means that one extends the basic building blocks of algebraic geometry, i.e., affine schemes, to the opposite category of one of the following choices 
\begin{itemize}
\item negatively graded commutative rings
\item simplicial commutative rings
\item connective $\mathbb{E}_\infty$ ring spectra
\end{itemize}
The last notion is the most natural from a homotopy theory viewpoint, but possibly not as accessible to the algebraic geometer.  Fortunately, if one wants to work over characteristic $0$, all three categories are equivalent (as simplicially enriched categories). 

Let $k$ be a unital (discrete) commutative ring, 
 and denote by $\kalg$ the $\infty$-category of simplicial commutative $k$-algebras 
With this motivation, we will be working with $D^-$-stacks (also referred to as derived stacks) over $\kalg$ with the \'etale or $fppf$ topology.  The category of $D^-$-stacks is a subcategory of the $\infty$-category $\mr{Fun}(\kalg, \s{S})$ where $\s{S}$ is the $\infty$-category of spaces (Kan complexes).  Let $A \in \kalg$,  $\spec A$ will denote the functor
\begin{align*}
  \kalg & \rightarrow \s{S} \\ B &\rightarrow \Map^L(A, B)
\end{align*}
and $\Aalg$ will denote the natural left fibration $(\kalg)_{A/}$ over $\kalg$.  Under the aforementioned conversion between fibrations and functors, $\spec A$ and $\Aalg$ are equivalent.  Associated to $D^-$-stack $X$ we can restrict to the full subcategory of discrete commutative $k$-algebras.  Following the notation in \cite{HAGII}, we denote this restriction as $t_0(X)$.  By adjointness, there is a natural morphism $\iota_X: t_0(X) \rightarrow X$.

A Cartesian morphism (coCartesian morphism) over a simplicial set $S$ is a fibrant-cofibrant object in the category of marked simplicial sets $(\sset^+)_{/S}$ with the Cartesian (coCartesian) model structure.  The functor $\sharp: (\sset)_{/S} \rightarrow (\sset^+)_{/S}$ has a right adjoint $\sM$ that makes $(\sharp, \sM)$ a Quillen adjunction. 
The counit of this adjunction is the classifying fibration associated to the Cartesian fibration.  As a matter of notation, if $P \rightarrow S$ is the Cartesian fibration, we denote the classifying fibration as $\sM_{\sP}$ (similar notation applies to coCartesian morphisms).  Recall that the category $(\sset)_{/S}$ with the covariant model structure is Quillen equivalent to the category $\mr{Fun}(\mathfrak{C} S, \sset)$ with the projective model structure.  Given $\sC \in (\sset)_{/S}$, we will denote a choice of fibrant-cofibrant functor associated to it as $F_{\sC}$ (well defined up to equivalence).

\subsection{The $\infty$-category of quasi-coherent sheaves.}

Given $A \in \kalg$, one has the stable symmetric monodial $\infty$-category of modules $\QC(A)$.  Similar to above, if $\textrm{char } k = 0$ this is equivalent to the category of dg-modules over the commutative dg-k-algebra $N(A)$.  This category (as we've written it) comes equipped with a t-structure generated by $A$ with its natural $A$-module structure \cite[Proposition 16.1]{DAGI}. We denote the heart of this t-structure as $\QC^0(A)$, and more generally $\QC^{[a,b]}(X)$, the full $\infty$-subcategory consisting of objects with cohomology (coming from the t-structure) vanishing outside of $[a,b]$.  We also have ``good truncation'' functors $\tau^a: \QC(A) \rightarrow \QC^{[a, \infty)}(A)$ and $\tau_a: \QC(A) \rightarrow \QC^{(-\infty, a]}(A)$. Due to the topological influence, we sometimes write $\pi_n(M)$ for $H^{-n}(M)$.  We denote the full sub-$\infty$-category $\QC^{\leq 0}(A)$ by $A$-mod. It is well known that $\QC(A)$ is the stabilization of this t-structure.    We note that there exists a forgetful functor from $A$-mod to the $\infty$-category $\kmod$.  With these conventions, it is clear that for $K \in \QC^0(A)$, $K$ is naturally a $\pi_0(A)$-module.  Further, $\pi_k(M)$ are $\pi_0(A)$-modules for any $M \in \QC(A)$.  In addition to $A$-mod, the sub-$\infty$-category $\Perf(A)$ will also be needed.  This is the smallest stable $\infty$-category of $\QC(A)$ containing $A$. 

As reviewed in \cite{BZ}, these definitions make sense for a large class of geometric stacks, which include derived schemes. For $X$ a $D^-$-stack, we will denote the associated $\infty$-category of modules by $\QC(X)$. For a general $D^-$-stack, the definition of $\QC(X)$ can be quite complicated.  However if $X$ is a geometric $D^-$-stack with affine diagonal, the description becomes much easier: it is $\varprojlim_{\Delta} \QC(U_n)$, where $n \rightarrow U_n$ is the simplicial affine scheme obtained via the \v{C}ech nerve associated to any atlas. For $f: A \rightarrow B \in \kalg$ and $M \in \QC^{(-\infty, 0]}(A)$,  $M\otimes_A B \in \QC^{(-\infty, 0]}(B)$.  This shows that there is a natural t-structure on $\QC(X)$.  We can extend the definition of a perfect complex on $X$ as an object $\s{P} \in \QC(X)$ such that its pullback to any affine is perfect.  The full stable subcategory of perfect complexes will be denoted by $\Perf(X)$. 

Since all of our categories are $\infty$-categories all functors in this paper are derived. For instance, for $A, B \in \kalg$, $A\otimes B$ is defined to be $A \otimes^\mL B$ in the natural model structure on $\kalg$.  Unless we explicitly state that limit or colimit is underived, assume all limits and colimits are homotopy limits and homotopy colimits.


Throughout the text below, many times we pullback to the underlying closed discrete subscheme.  The appropriate way of thinking about this is by \cite{HAGII}: $\kmod$ and $\kalg$ behave exactly like the underived counterparts.  In this manner, one should treat this $\infty$-category more as an abelian category rather than attempting to understand it in its stable envelope.  One gets lured into thinking of the homotopy groups as stable homotopy groups and this leads to incorrect intuition.

To define pseudo-coherent objects we will work with filtered objects in stable $\infty$-categories.  If we denote (by abuse of notation) $\mZ$ as the natural category associated to the integers, a filtered object of $\QC(X)$ is just a functor $\mr{Fun}(N\mZ, \QC(X))$.  A more complete discussion can be found in \cite[Section 11]{DAGI}.   

\section{Pseudo-coherence}

When moving from the discrete ``classical" world to the simplicial ``derived" world, the bounded derived category of coherent sheaves becomes difficult to define.  In particular, if one tries directly to use the t-structure on $A$-mod to define the category, then $A$ itself, a compact/dualizable object will not be in the category.  In fact, it will contain very little ``perfect" objects.  To fix this we use ideas of Illusie \cite{SGA6} and Thomason \cite{TT}.  It was already known in \cite{SGA6}, that coherent sheaves and chain complexes of coherent sheaves have many drawbacks if the scheme doesn't satisfy nice finiteness properties.  In particular, any reasonable definition should contain $Perf(X)$.  To fix these, Illusie defined and studied pseudo-coherent sheaves over a general ringed topos. 

 A direct port of these definition to the derived setting is not possible since vector bundles on an affine derived scheme $\spec A$ will not be discrete unless $A$ is discrete.  Thus, grading our objects by a t-structure becomes the wrong notion.  At play here is a break between algebraic and topological invariants when $A \in \kalg$ is not discrete.  For example, if we think of modules in terms of generators and relations, free algebras are our basic building blocks.  While topologically, discrete spaces are our basic building blocks; these notions depart when $A$ is not discrete.

Any reasonable ``grading" of $\QC(X)$ will ensure that $\sO_X$ has grade zero.  For this reason, Tor-amplitude will record our algebraic invariants and the natural t-structure will record the topological invariants. From a homotopical perspective, if $A$ is discrete the bad truncation $\sigma^{\geq m}$ corresponds to the $m$th skeleton.  Thus if $F \in \QC(A)$ is  $n$-pseudo-coherent and  $m < n$, the difference between the $m$ and $m+1$ skeleton of $F$ is a dualizable object.  To properly carry this out, one should have an idea of a ``relative" skeleton that records things in cellular length.  

 

\begin{definition}
Let $\sF \in \QC(X)$.  Then $\sF$ has Tor-amplitude of $[a,b]$ if for all $M \in \QC^0(X)$, $\sF\otimes M \in QC^{[a,b]}(X)$. It has locally finite Tor amplitude if it has finite Tor-amplitude on an open cover of $X$.
\end{definition}
\begin{remark}
This definition is equivalent to the standard definition since our topoi have enough points.
\end{remark}

For $X = \spec A$, we have the following properties from \cite{dg_moduli}:
\begin{proposition}
\label{absolute_tor} \mbox{}
\begin{enumerate}
\item $M \in \Amod$ has Tor-amplitude of 0 if and only if it is flat, i.e. $\otimes M$ commutes with finite limits in $\Amod$.
\item $M \in \Amod$ has Tor-amplitude of $[a,b]$ if and only if $M \otimes \pi_0(A)$ has Tor-amplitude $[a,b]$ as a $\pi_0(A)$ module.
\item If $M \to N \to P$ forms a distinguished triangle with two of the elements having finite Tor-amplitude, then so does the third.
 \end{enumerate}
\end{proposition}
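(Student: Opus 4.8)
The plan is to prove (2) first, since it transports the entire question to the discrete ring $\pi_0(A)$ where Tor-amplitude is the classical notion, and then to harvest (1) and (3) from it. The two structural inputs I would isolate at the outset are: the heart $\QC^0(A)$ depends only on $\pi_0(A)$, i.e. the restriction/extension adjunction identifies $\QC^0(A)\simeq\QC^0(\pi_0(A))$ with objects that are genuinely discrete $\pi_0(A)$-modules; and the associativity/base-change equivalence $M\otimes_A N\simeq (M\otimes_A\pi_0(A))\otimes_{\pi_0(A)}N$, valid for any $N\in\QC^0(A)$ regarded as a $\pi_0(A)$-module. Both are immediate from the construction of the t-structure recalled above. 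For (2) itself, fix $M$ and set $M_0:=M\otimes_A\pi_0(A)$. By the base-change equivalence, $M\otimes_A N\simeq M_0\otimes_{\pi_0(A)}N$ for every $N\in\QC^0(A)$, with matching cohomology; and as $N$ ranges over $\QC^0(A)$ it ranges over exactly the discrete $\pi_0(A)$-modules. Hence ``$M\otimes_A N\in\QC^{[a,b]}(A)$ for all $N\in\QC^0(A)$'' is word-for-word ``$M_0$ has Tor-amplitude $[a,b]$ over $\pi_0(A)$,'' which is (2), with no further computation.

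For (1), Tor-amplitude $0$ is the case $[a,b]=[0,0]$, so by (2) it is equivalent to $M_0$ being a discrete, classically $\pi_0(A)$-flat module. It remains to match this with flatness of $M$ in the stated sense, that $-\otimes_A M$ commutes with finite limits on $\Amod=\QC^{\leq 0}(A)$. Here the one non-formal point enters: on the stable category $\QC(A)$ the functor $-\otimes_A M$ is exact and so automatically preserves finite limits, whence the content of ``commutes with finite limits in $\Amod$'' is its compatibility with the truncation $\tau_0$ needed to compute limits inside the connective subcategory. I would show this compatibility is equivalent to $-\otimes_A M$ being t-exact, which by the standard homotopy-group criterion is equivalent to $\pi_0(M)$ being $\pi_0(A)$-flat together with $\pi_*(M)\simeq\pi_*(A)\otimes_{\pi_0(A)}\pi_0(M)$. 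Matching these two descriptions of flatness closes the circle with the Tor-amplitude-$0$ condition coming from (2).

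For (3), tensoring the distinguished triangle $M\to N\to P$ with an arbitrary $K\in\QC^0(A)$ yields a distinguished triangle $M\otimes K\to N\otimes K\to P\otimes K$ in $\QC(A)$, since $-\otimes K$ is exact. Finite Tor-amplitude means that, after tensoring with every such $K$, the cohomology is confined to a bounded window; the associated long exact sequence then squeezes the cohomology of the third term between that of the other two, so if two of the three windows are finite the third is contained in their union enlarged by a single degree. The only thing to verify is that this enlargement is by a fixed amount independent of $K$, which it is, since the connecting maps shift cohomological degree by exactly one; hence the resulting bound is uniform over all test objects and (3) follows.

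The main obstacle is the middle equivalence in (1): identifying the categorical flatness condition, preservation of finite limits computed inside the non-stable category $\Amod$, with t-exactness of $-\otimes_A M$ and thence with the homotopy-group flatness criterion. Everything else is a formal consequence of base change and the long exact sequence, but this step is where the distinction between limits computed in $\QC(A)$ and limits computed in the connective subcategory $\Amod$ must be handled with care.
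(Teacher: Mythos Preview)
Your argument for (2) is exactly the paper's: both use the base-change equivalence $M\otimes_A N \simeq (M\otimes_A\pi_0(A))\otimes_{\pi_0(A)}N$ for $N\in\QC^0(A)$. For (3) the paper simply says ``since the statement is true for $\pi_0(A)$, this follows from (2)''; your long-exact-sequence argument is what makes that reduction work, so the two are the same in substance.

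The genuine difference is in (1). You route through (2) and then invoke the Lurie-style homotopy-group characterisation of flatness to match ``$M\otimes_A\pi_0(A)$ is discrete and $\pi_0(A)$-flat'' with ``$-\otimes_A M$ preserves finite limits in $\Amod$''. That is correct but imports a nontrivial external statement, and the final matching step you flag as the obstacle is left as a citation. The paper instead argues (1) directly, without using (2) at all: for a fiber square in $\Amod$, the only obstruction to $-\otimes_A M$ preserving it is the discrepancy between the fiber computed in $\QC(A)$ and its connective cover. If the corner objects have trivial $\pi_0$, the $\QC(A)$-fiber is already connective, so the square is simultaneously a cofiber square and is preserved by any tensor functor. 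In the remaining case the difference between $\operatorname{fib}$ and $\tau_{\geq 0}\operatorname{fib}$ is a single discrete module shifted into degree $-1$; Tor-amplitude $0$ says exactly that tensoring with $M$ keeps that piece discrete, which forces $M\otimes\tau_{\geq 0}\operatorname{fib}\simeq\tau_{\geq 0}(M\otimes\operatorname{fib})$. This is shorter and self-contained; your approach has the advantage of making explicit the link to the standard structural characterisation of flat modules, at the cost of relying on it.
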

\begin{proof}
{\it 1)} It is clear that flat implies $M$ has Tor-amplitude of $0$.  For the other direction, just note that if
\begin{displaymath}
\xymatrix{ A \ar[r] \ar[d] & B \ar[d] \\ 0 \ar[r] & C}
\end{displaymath}
is a fiber diagram with $A, B, C \in \Amod$ with trivial $\pi_0$ then it is in fact a cofiber diagram.  Thus, any object will preserve the diagram.  One then checks the remaining case and sees that Tor-amplitude of $0$ translates to preserving these fiber diagrams.

{\it 2)}  This is a consequence of the equivalence $M\otimes_{A} N \cong M \otimes_{\pi_0(A)} \pi_0(A) \otimes_A N$ for any $N \in \QC^0(A)$. 

{\it 3)} Since the statement is true for $\pi_0(A)$, this follows from above. 
\end{proof}

\begin{definition}
\label{nspc}
 $\sF \in \QC(X)$ is strict $n$-pseudo-coherent if there exists a filtered object $F$ of $\QC(X)$ with  $\sF \cong \varinjlim F$ and $F$ satisfying 
 \begin{enumerate}
 \item $F(i) = 0$ for $i << 0$,
 \item $F(i) \in \Perf(X)$ for $i \leq n$,
 \item $\cofib(F(i-1) \rightarrow F(i))$ has Tor-amplitude $-i$ for $i \leq n$
 \item $\cofib(F(i-1) \rightarrow F(i))$ has Tor-amplitude in $(-\infty, -i]$ for $i > n$.
 \end{enumerate}
\end{definition}

\begin{definition}
\label{spc}
 $\sF \in \QC(X)$ is strict pseudo-coherent if there exists a filtered object $F$ of $\Perf(X)$ with $\sF \cong \varinjlim F$ and $F$ satisfying  
 \begin{enumerate}
 \item $F(i) = 0$ for $i << 0$,
 \item $\cofib(F(i-1) \rightarrow F(i))$ has Tor-amplitude $-i$ for all $i$.
 \end{enumerate}
\end{definition}

In the case that $X$ is affine, our pseudo-coherence is equivalent to cellular objects in \cite{EKMM}.   We chose this definition (among many equivalent ones) since it allows one to naturally work with resolutions by perfect objects.  This allows much of the machinery built in \cite{TT} to be ported over with little work.  In particular, we need an analog of $\sigma^p$ (the brutal truncation).  This is clearly given by $F(p)$ for our strict pseudo-coherent objects.  Much like the classical case, the brutal truncation depends on the resolution and is not an invariant of the pseudo-coherent object.

\begin{definition}
For $\sF \in \QC(X)$, $\sF$ is pseudo-coherent ($n$-pseudo-coherent) if there exists an open affine cover $U$ of $X$ with $\sF|_U$ strict pseudo-coherent ($n$-pseudo-coherent).  We denote by $\pscoh(X)$ the full subcategory of $\QC(X)$ consisting of pseudo-coherent objects.
\end{definition}
\begin{remark}
By Lemma \ref{portingTT}, $\pscoh(X)$ is a stable and idempotently closed. 
\end{remark}
In the case that $X$ is ``underived'', i.e., $X \cong t_0(X)$, it is clear that our definition and that given in \cite{SGA6} are equivalent.  The definition we have given incorporates a resolution into the definition.  It is similar to the idea of a $p$-resolution in triangulated and dg-categories.

\begin{remark}
This definition is essentially the same as given in \cite{lurie_thesis}, but not the same as the definition given in \cite{tolly}.  Proposition \ref{aligned} gives the cases in which these coincide. 
\end{remark}

We now outline key properties of $\pscoh(X)$.  The interested reader is referred to the appendix for a more complete description of the properties of pseudo-coherent objects.  The proofs of the following two propositions can be found in the appendix as well.  Recall from \S \ref{notation}, $\iota_X: t_0(X) \to X$ is the natural counit morphism.


\begin{lemma}
\label{pullback}
If $\sF$ is pseudo-coherent on $X$, then $\iota_X^\ast(\sF)$ is pseudo-coherent on $t_0(X)$.
\end{lemma}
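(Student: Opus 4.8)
The plan is to exploit the local definition of pseudo-coherence and reduce everything to the affine case, where $\iota_X^\ast$ becomes the explicit base-change functor $-\otimes_A \pi_0(A)$, and then to transport a strict pseudo-coherent resolution across $\iota_X^\ast$ term by term. First I would choose, using the definition of pseudo-coherence, an open affine cover $\{U_\alpha = \spec A_\alpha\}$ of $X$ on which $\sF$ is strict pseudo-coherent. Since open immersions have the same underlying topological space before and after truncation, $t_0(U_\alpha) \cong \spec \pi_0(A_\alpha)$ and the square relating $t_0(U_\alpha) \to t_0(X)$ to $U_\alpha \to X$ is Cartesian; base change then yields $(\iota_X^\ast \sF)|_{t_0(U_\alpha)} \cong \iota_{U_\alpha}^\ast(\sF|_{U_\alpha})$. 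As $\{t_0(U_\alpha)\}$ is an open affine cover of $t_0(X)$, it suffices to treat the case $X = \spec A$, where $\iota_X^\ast$ is the functor $-\otimes_A \pi_0(A)\colon \QC(\spec A) \to \QC(\spec \pi_0(A))$.

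So suppose $\sF$ is strict pseudo-coherent on $\spec A$, witnessed by a filtered object $F$ of $\Perf(\spec A)$ as in Definition \ref{spc}, with $\sF \cong \varinjlim F$. I would simply apply $\iota_X^\ast$ to the whole diagram $F$ and check that $\iota_X^\ast F$ witnesses strict pseudo-coherence of $\iota_X^\ast \sF$. The functor $\iota_X^\ast$ is a symmetric monoidal left adjoint, so it preserves colimits (giving $\iota_X^\ast \sF \cong \varinjlim \iota_X^\ast F$, and $\iota_X^\ast F(i)=0$ for $i \ll 0$), it carries perfect complexes to perfect complexes (so each $\iota_X^\ast F(i) \in \Perf(\spec \pi_0(A))$), and, as a colimit-preserving functor of stable $\infty$-categories, it preserves cofiber sequences, whence
\[
\cofib\bigl(\iota_X^\ast F(i-1) \to \iota_X^\ast F(i)\bigr) \;\cong\; \iota_X^\ast\, \cofib\bigl(F(i-1) \to F(i)\bigr).
\]
Thus all the structural conditions of Definition \ref{spc} are automatic, and only the Tor-amplitude condition on the cofibers remains to be verified.

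This last point is exactly the content of Proposition \ref{absolute_tor}(2). Writing $C_i := \cofib(F(i-1)\to F(i))$, by hypothesis $C_i$ has Tor-amplitude $-i$ over $A$, and Proposition \ref{absolute_tor}(2) identifies this with the Tor-amplitude of $C_i \otimes_A \pi_0(A)$ over $\pi_0(A)$; indeed the proof there uses only the identity $C_i \otimes_A M \cong \bigl(C_i \otimes_A \pi_0(A)\bigr) \otimes_{\pi_0(A)} M$ valid for every $M \in \QC^0$, so it applies to each $C_i$ (after a shift into $\Amod$ if one wishes to quote the statement verbatim). Since $\iota_X^\ast C_i = C_i \otimes_A \pi_0(A)$, every cofiber of $\iota_X^\ast F$ has Tor-amplitude $-i$, completing the verification and hence the affine case.

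The genuinely non-formal ingredient is the base-change identity $(\iota_X^\ast \sF)|_{t_0(U_\alpha)} \cong \iota_{U_\alpha}^\ast(\sF|_{U_\alpha})$, that is, the compatibility of truncation with restriction to an open affine; I expect this to be the main thing requiring care, though it is standard, following from the fact that $t_0$ does not alter the underlying topology and from flat base change for $\QC$ along the resulting Cartesian square. Everything else is forced by the formal properties of $\iota_X^\ast$ together with Proposition \ref{absolute_tor}(2), which does all the real work on Tor-amplitude.
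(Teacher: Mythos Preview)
Your proof is correct and follows essentially the same approach as the paper: reduce to the affine case, apply $\iota_X^\ast = -\otimes_A \pi_0(A)$ term by term to the filtered object $F$, and invoke Proposition~\ref{absolute_tor}(2) to see that Tor-amplitude is preserved. You are in fact more careful than the paper, which omits the explicit base-change reduction along the open cover and speaks loosely of the Tor-amplitude of $F(n)$ rather than of the cofibers.
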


\begin{lemma}
\label{perfect_finite_tor}
Let $X$ be a geometric $D^-$ stack.
\begin{enumerate}
\item $\Perf(X) \subset \pscoh(X)$ and is defined by the property of locally finite Tor-amplitude.
\item $\sF \in \pscoh$ then $\tau^k(\sP)$ is $-k$-pseudo-coherent for all $k$. 
\end{enumerate}
\end{lemma}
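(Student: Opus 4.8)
The plan is to reduce both parts to the affine case $X = \spec A$, since $\Perf$, $\pscoh$, and local Tor-amplitude are all defined on an affine cover, and then to argue directly with the strict resolutions of Definitions \ref{spc} and \ref{nspc}.

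For part (1), the inclusion $\Perf(X) \subset \pscoh(X)$ and the implication ``perfect $\Rightarrow$ locally finite Tor-amplitude'' are the easy directions. A perfect complex on $\spec A$ is a retract of a finite iterated extension of shifts $A[j]$ of the free module, each $A[j]$ being flat-up-to-shift of Tor-amplitude $-j$; reindexing this finite filtration so that its $i$-th graded piece is a finite free shift of Tor-amplitude $-i$ exhibits it as strict pseudo-coherent (the cofibers outside the finite range are $0$, which carries every Tor-amplitude), and idempotent-closedness of $\pscoh$ (Lemma \ref{portingTT}) handles the retract, while finiteness of the complex of flats gives the Tor-amplitude bound. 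The content is the converse: a pseudo-coherent $\sF$ of Tor-amplitude $[a,b]$ is perfect. Writing $\sF \cong \varinjlim F$ strict pseudo-coherent, for $M \in \QC^0$ each $\mathrm{gr}_i := \cofib(F(i-1)\to F(i)) \otimes M$ is concentrated in cohomological degree $-i$, so the filtration on $\sF \otimes M$ has graded pieces in strictly decreasing single degrees and $H^{-i}(\sF\otimes M)$ is exactly the module underlying $\mathrm{gr}_i$. Since $\sF \otimes M \in \QC^{[a,b]}$, this forces $\mathrm{gr}_i = 0$ for $i > -a$; taking $M = \pi_0(A)$, using that each $\cofib(F(i-1)\to F(i))$ is a finite-projective shift, and invoking Proposition \ref{absolute_tor}(2) together with Nakayama shows the actual cofiber vanishes. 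Hence $F$ stabilizes and $\sF \cong F(-a)$ is perfect.

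For part (2), fix $n = -k$ and work locally with $\sF \cong \varinjlim F$. First I would record that $C_n := \cofib(F(n) \to \sF)$ is strict pseudo-coherent via the explicit resolution $i \mapsto \cofib(F(n) \to F(i))$, which is perfect, vanishes for $i \le n$, and has $i$-th cofiber of Tor-amplitude $-i$; its Tor-amplitude lies in $(-\infty, -(n+1)]$. Promoting this Tor bound to an honest cohomological bound, via Proposition \ref{absolute_tor}(2), coherence of the cohomology of pseudo-coherent objects, and Nakayama, gives $C_n \in \QC^{(-\infty, k-1]}$. Next I would identify $D := \cofib(F(n) \to \tau^k\sF)$: the octahedral axiom applied to $F(n) \to \sF \to \tau^k\sF$ places $D$ in a triangle $C_n \to D \to (\tau_{k-1}\sF)[1]$, whence $D \in \QC^{(-\infty,k-1]}$ as well, and $D$ is pseudo-coherent (using that good truncations preserve pseudo-coherence, from the appendix, and stability from Lemma \ref{portingTT}). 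Choosing a strict pseudo-coherent resolution $E$ of $D$ with $E(i) = 0$ for $i \le n$, I splice: set $G(i) = F(i)$ for $i \le n$ and $G(i) = \tau^k\sF \times_D E(i)$ for $i \ge n$ (the two prescriptions agree at $i=n$, both equal to $F(n)$). Pulling back the fiber sequence $F(n) \to \tau^k\sF \to D$ along $E(i) \to D$ gives a triangle $F(n) \to G(i) \to E(i)$, so $\varinjlim G \cong \tau^k\sF$, the $G(i)$ are perfect with cofibers of Tor-amplitude $-i$ for $i \le n$, and for $i > n$ one has $\cofib(G(i-1)\to G(i)) \cong \cofib(E(i-1)\to E(i))$, of Tor-amplitude $-i$ and in particular in $(-\infty,-i]$ — exactly the conditions of Definition \ref{nspc} for $n = -k$.

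The main obstacle is the repeated passage from Tor-amplitude bounds to genuine cohomological (t-structure) bounds, and in part (2) the existence of a perfect resolution of $D$ \emph{aligned} with its cohomological amplitude, i.e.\ with $E(i) = 0$ for $i \le n$. In the derived setting $\tau^{\bullet}$ does not commute with $\otimes M$, so these steps are not formal; they rest on the coherence of the cohomology groups of pseudo-coherent objects and a Nakayama argument over $\pi_0(A)$, which is precisely the porting of the SGA6 truncation lemmas deferred to the appendix.
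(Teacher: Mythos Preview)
Your argument for part~(1) has a real gap. You claim that because each graded piece $\mathrm{gr}_i \otimes M$ is concentrated in the single cohomological degree $-i$, one has $H^{-i}(\sF\otimes M)\cong \mathrm{gr}_i\otimes M$, and hence the Tor bound $[a,b]$ forces $\mathrm{gr}_i=0$ for $i>-a$. But a filtration whose graded pieces sit in consecutive single degrees is exactly the situation of the brutal filtration on a complex of projectives: the $E_1$-page lives on the line $q=0$ and the $d_1$-differential \emph{is} the differential of the complex, so the spectral sequence does not degenerate and $H^{-i}(\sF\otimes M)$ is a subquotient of $\mathrm{gr}_i\otimes M$, not equal to it. Concretely, a strict pseudo-coherent filtration of a perfect $\sF$ need not stabilize at level $-a$: over discrete $A$ one can always splice in an acyclic free tail (e.g.\ take $\sF=A$, $F(0)=A$, $F(1)=A\oplus A[1]$, $F(2)=A$, \ldots), so ``$\sF\otimes M\in\QC^{[a,b]}$'' alone cannot kill the higher graded pieces. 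The paper instead reduces (by the usual d\'evissage of \cite[2.2.12]{TT}) to the case of Tor-amplitude $[0,0]$, observes that $\iota_A^\ast\sF$ is then a finitely generated projective $\pi_0(A)$-module, lifts it to a projective $A$-module $P$ with a map $P\to\sF$, and shows this is an equivalence because its cofiber dies after $\otimes_A\pi_0(A)$.

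For part~(2) your splicing construction is far more elaborate than needed and flirts with circularity: you invoke that good truncations preserve pseudo-coherence in order to make $D=\cofib(F(n)\to\tau^k\sF)$ pseudo-coherent, and you need a resolution of $D$ that is \emph{aligned} (vanishing for $i\le n$), which is an extra input. The paper's proof is a one-liner: with $n=-k$, set
\[
F^k(i)=\begin{cases}F(i)& i\le -k\\ \tau^k\sF & i>-k,\end{cases}
\]
with transition map $F(-k)\to\sF\to\tau^k\sF$. For $i\le n$ nothing changes, so conditions~(1)--(3) of Definition~\ref{nspc} are inherited; for $i>n+1$ the cofibers are $0$; and at $i=n+1$ the single remaining cofiber $\cofib(F(-k)\to\tau^k\sF)$ is handled directly, using that $F(-k)\to\sF$ is a $(-n)$-quasi-isomorphism (Proposition~\ref{conversion}). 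No splicing, no Nakayama passage from Tor-amplitude to cohomological amplitude, and no appeal to pseudo-coherence of $\tau_{k-1}\sF$ is required.
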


\begin{example}
  Let $X$ be a geometric $D^-$-stack and $\sF$ perfect with Tor-amplitude $[0,0]$.  Then $\sF$ is a locally free sheaf on $X$.  The result for $\sF$ not necessarily strict follows from the fact that the operator $\tau^k$ commutes with flat pullback.
\end{example}



\begin{proposition}
\label{aligned}
Let $X$ be a quasi-compact stack with an ample sequence.  Then $\sF \in \pscoh(X)$ if and only if it is strict pseudo-coherent.
\end{proposition}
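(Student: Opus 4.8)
The plan is to establish both implications, with all the real content in showing that a (locally defined) pseudo-coherent object admits a \emph{global} resolution by perfect objects. The reverse implication ``strict $\Rightarrow$ pseudo-coherent'' is a formality: if $\sF \cong \varinjlim F$ for a filtered object $F$ of $\Perf(X)$ as in Definition \ref{spc}, then for any open affine immersion $j\colon U \hookrightarrow X$ the restriction $j^\ast F$ is a filtered object of $\Perf(U)$ with $j^\ast \sF \cong \varinjlim j^\ast F$; since $j$ is flat, the successive cofibers retain Tor-amplitude $-i$ (Proposition \ref{absolute_tor}, Lemma \ref{perfect_finite_tor}). Hence $\sF|_U$ is strict pseudo-coherent for every open affine $U$, so $\sF$ is pseudo-coherent.

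For the forward implication I would build a global perfect resolution of $\sF$ cell-by-cell, porting the classical arguments of \cite{SGA6} and \cite{TT}. The role of the ample sequence $\{L_n\}$ is to supply, globally, the ``free'' objects to resolve with: recall that for every finite-type object $\sG$ of the heart and every bound $N$ there are indices $n_1, \dots, n_r \ge N$ and a surjection $\bigoplus_j L_{n_j} \twoheadrightarrow \sG$ in the heart, and moreover $\mr{Ext}^{>0}(L_n, \sG) = 0$ for $n \gg 0$. Two preliminary observations: since $X$ is quasi-compact and $\sF$ is pseudo-coherent, $\sF$ is cohomologically bounded above, so there is a least integer $m$ with $\pi_m(\sF) \ne 0$; and each $\pi_j(\sF)$, being the cohomology of a pseudo-coherent object, is a finite-type object of the heart, hence (as the heart depends only on $\pi_0$) lives on $t_0(X)$, where the ample sequence applies.

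The induction produces, for each $i \ge m$, a perfect $F(i)$ and a map $\phi_i\colon F(i) \to \sF$ whose fiber $C(i) := \fib(\phi_i)$ is $i$-connective, with $\cofib(F(i-1)\to F(i))$ perfect of Tor-amplitude $-i$. For the base case, choose $G_m \twoheadrightarrow \pi_m(\sF)$, lift it to $\phi_m\colon G_m[m] \to \sF$, and set $F(m) = G_m[m]$; the long exact sequence shows $C(m)$ is $m$-connective. For the inductive step, $C(i)$ is pseudo-coherent (stability of $\pscoh(X)$, Lemma \ref{portingTT}), so its lowest nonvanishing homotopy $\pi_i(C(i))$ is a finite-type object of the heart; I surject $G \twoheadrightarrow \pi_i(C(i))$ and lift to $\psi\colon G[i] \to C(i)$. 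Composing with $C(i) \to F(i)$ and setting $F(i+1) := \cofib(G[i] \to F(i))$ gives $\cofib(F(i)\to F(i+1)) \cong G[i+1]$, of Tor-amplitude $-(i+1)$ since the $L_n$ are locally free; the composite $G[i] \to F(i) \xrightarrow{\phi_i}\sF$ is canonically null, producing $\phi_{i+1}$, and an octahedral computation identifies $C(i+1) \cong \cofib(\psi)$, which is $(i+1)$-connective because $\psi$ is surjective on $\pi_i$.

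The hard part is the lifting step: the surjection onto $\pi_i(C(i))$ is only a morphism in the heart, and promoting it to an honest map $G[i] \to C(i)$ requires the obstruction in $\pi_{-1}\Map(G[i], \tau_{\ge i+1} C(i))$ to vanish. A convergent spectral sequence expresses this obstruction in terms of the groups $\mr{Ext}^{>0}(L_{n_j}, \pi_k(C(i)))$; since it lives in a single total degree and $X$ is quasi-compact of finite cohomological dimension, only finitely many $k$ contribute, and the higher-Ext-vanishing built into ampleness kills each of them once the indices $n_j$ are taken large enough (quasi-compactness also ensuring that finitely many sections suffice for the surjection). Granting the lifting, the construction finishes formally: setting $F(i) = 0$ for $i < m$ gives a filtered object of $\Perf(X)$ (each $F(i)$ is perfect, being built from perfect objects by finitely many cofiber sequences), and since the invariant forces $\pi_j(F(i)) \xrightarrow{\sim} \pi_j(\sF)$ for all $i > j$, we obtain $\varinjlim F(i) \cong \sF$, exhibiting $\sF$ as strict pseudo-coherent.
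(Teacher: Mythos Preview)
Your proof is correct and follows essentially the same inductive construction as the paper: build $F(i)$ step by step by mapping finite sums of twists of the ample sequence onto the top homotopy of the running fiber $C(i) = \fib(F(i)\to\sF)$, then cofiber and invoke the octahedral axiom to continue. You are in fact more explicit than the paper about the one nontrivial point it glosses over---namely, that the surjection $G \twoheadrightarrow \pi_i(C(i))$ in the heart must be promoted to an honest map $G[i]\to C(i)$---and your treatment (kill the finitely many obstruction terms via the higher-$\mr{Ext}$ vanishing built into ampleness, using quasi-compactness to bound the number of contributing degrees) is exactly what is needed to make the paper's ``using the same procedure as above'' precise.
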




\begin{proof} 
Clearly strict pseudo-coherence implies pseudo-coherence. For the other implication, let $\sF \in \pscoh(X)$; we will prove strict pseudo-coherence by induction.  To start the induction, since $X$ is quasi-compact we can assume that $\sF$ is cohomologically bounded above.  Let $N$ be the highest non-trivial cohomology (i.e., $N$ is the least integer satisfying $\tau^k \tau_k(\sF) \cong 0$ for all $k > N$).  By definition of an ample sequence, we have a morphism $\oplus \sL_{i_\alpha}[-N] \xrightarrow{a_N} \sH^n(\sF)$, surjective on the $Nth$ cohomology.  Since $\iota_X^\ast \sF \in \pscoh(t_0(X))$, $\sH^n(\sF)$ is finite type and the $\iota_X^\ast \sL_i$ form a ample sequence on $t_0(X)$, thus we can assume this sum is over a finite index.  Let $F(-N) = \oplus  \sL_{i_\alpha}[-N]$ and $F(k) = 0$ for $k < -N$.

Assume that we have constructed $F(n)$, we will construct $F(n+1)$.  From $F(n) \xrightarrow{a_n} \sF$, we have the distinguished triangle
\begin{displaymath}
  \xymatrix{\fib(a_n) \ar[r] & F(n) \ar[r] & \sF }
\end{displaymath}
By Lemma \ref{conversion} $a_n$ is a $-n$-quasi-isomorphism, thus $\fib (a_n)$ is cohomologically bounded above by $-n$.  Using the same procedure as above, we have a morphism from a finite sum $\oplus_\beta \sL_{i_\beta}[n] \xrightarrow{b} \fib(a_n)$.  This yields a diagram

\begin{displaymath}
  \xymatrix{\sF\ar@{-->}[rd] \ar[dd] & &\ar[ll]^{a_n} F(n) \\ & \fib(a_n) \ar[ru] \ar[ld] & \\ \cofib(b)\ar@{-->}[rr]& & \ar[lu]^{b} \ar[uu] \oplus_\beta \sL_{i_{\beta}}}
\end{displaymath}
where the triangles involving dotted arrows are distinguished, and the dotted morphism involves a shift.  The octahedral axiom shows the existence of the diagram, which we use to define $F(n+1)$:
\begin{displaymath}
 \xymatrix{\sF \ar@{-->}[dd] & & F(n) \ar[ll] \ar[ld] \\ & F(n+1) \ar[lu] \ar@{-->}[rd] & \\ \cofib(b)\ar[rr] \ar[ru]& & \ar[uu] \oplus_\beta \sL_{i_{\beta}}[n]}
\end{displaymath}
$F(n+1)$ is perfect and $\cofib(F(n) \to F(n+1)) \cong \oplus_\beta \sL_{i_\beta}[n+1]$, which has Tor-amplitude of $-n -1$.  Further, the upper left diagonal morphism is the required structure morphism $F(n+1) \to \sF$, and the induction is complete.
\end{proof}

\begin{corollary}
\label{tiein}
  $\sF \in \pscoh(X)$ if and only if it is $n$-pseudo-coherent for all $n$.
\end{corollary}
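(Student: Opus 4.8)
The plan is to prove both implications after reducing to the affine situation, where the inductive mechanism of Proposition~\ref{aligned} is available. Both $\pscoh(X)$ and its $n$-truncated analogue are defined by the existence of an open affine cover on which $\sF$ restricts to a strict (resp.\ strict $n$-) pseudo-coherent object, and strictness of either type is preserved under restriction to open affine subsets: one simply restricts the filtered resolution, using that $\Perf$ and finite Tor-amplitude are local (cf.\ Lemma~\ref{perfect_finite_tor}) and that pullback commutes with the colimit computing $\sF$. Hence it suffices to prove, for $X=\spec A$, that $\sF$ is strict pseudo-coherent if and only if it is strict $n$-pseudo-coherent for every $n$; the affine case of Proposition~\ref{aligned}, together with the same construction carried out only through degree $n$, identifies the covered notions with the strict ones.

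The forward implication is immediate from the definitions. If $F$ is a strict pseudo-coherent resolution as in Definition~\ref{spc}, then $F$ verbatim satisfies the hypotheses of Definition~\ref{nspc} for every $n$, since each $F(i)\in\Perf(X)$ and each cofiber $\cofib(F(i-1)\to F(i))$ has Tor-amplitude exactly $-i$, which lies in $(-\infty,-i]$. Thus strict pseudo-coherence forces strict $n$-pseudo-coherence for all $n$.

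For the reverse implication I would re-run the inductive construction from the proof of Proposition~\ref{aligned}, feeding it the hypothesis ``$n$-pseudo-coherent for all $n$'' in place of an a priori global pseudo-coherence assumption. Working over $\spec A$ with its ample sequence, I build a filtered object $F$ with $F(i)\in\Perf(X)$ and $\cofib(F(i-1)\to F(i))$ of Tor-amplitude exactly $-i$ one step at a time. At the $n$-th stage the structure map $a_n\colon F(n)\to\sF$ is arranged to be a $-n$-quasi-isomorphism (Lemma~\ref{conversion}), so $\fib(a_n)$ is cohomologically bounded above; the $n$-pseudo-coherence of $\sF$, pushed to $t_0(X)$ exactly as in Lemma~\ref{pullback}, guarantees that the top cohomology of $\fib(a_n)$ is of finite type, whence the ample sequence produces a finite sum $\oplus_\beta \sL_{i_\beta}[n]\to\fib(a_n)$ and an octahedron yields $F(n+1)$ with $\cofib(F(n)\to F(n+1))\cong\oplus_\beta \sL_{i_\beta}[n+1]$ of Tor-amplitude exactly $-(n+1)$. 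Because the $a_n$ are $-n$-quasi-isomorphisms with $n\to\infty$, the colimit $\varinjlim F$ recovers $\sF$, and $F$ is a strict pseudo-coherent resolution.

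The main obstacle is the reverse implication, specifically decoupling the construction of Proposition~\ref{aligned} from the standing hypothesis that $\sF$ is already pseudo-coherent. One must verify that, at every inductive stage, the two inputs actually consumed by that argument---cohomological boundedness above of $\fib(a_n)$ and finite-typeness of its top cohomology---are supplied by the single assumption of $n$-pseudo-coherence for the appropriate $n$, rather than by global pseudo-coherence. The boundedness is self-propagating through the construction via Lemma~\ref{conversion}, while the finiteness requires the $n$-pseudo-coherent analogue of Lemma~\ref{pullback}. Once these are in place the construction produces cofibers of Tor-amplitude exactly $-i$ at each step, which is precisely the strict condition of Definition~\ref{spc}, and the limiting identification $\varinjlim F\cong\sF$ is routine.
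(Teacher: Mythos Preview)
Your proof is correct and follows essentially the same approach as the paper: both reduce to the affine case (which has an ample sequence) and observe that the inductive construction in the proof of Proposition~\ref{aligned} consumes, at the $n$th step, only the $n$-pseudo-coherence of $\sF$ rather than full pseudo-coherence. The paper makes this observation in a single sentence (``Its proof didn't use the full strength of pseudo-coherence, only $n$-pseudo-coherent for each $n$''), whereas you spell out explicitly which inputs---boundedness of $\fib(a_n)$ via Lemma~\ref{conversion} and finite-typeness of the top cohomology via the $n$-pseudo-coherent analogue of Lemma~\ref{pullback}---are actually consumed at each stage; this is a useful elaboration but not a different argument.
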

\begin{proof}
 Clearly $\sF$ being pseudo-coherent implies it is $n$-pseudo-coherent.  For the other direction, note that $n$-pseudo-coherence is stable under pullback (see Lemma \ref{local_prop} for more detail).  Thus on any open affine cover $f_\alpha: U_\alpha \to X$, $f^\ast_\alpha \sF$ is $n$-pseudo-coherent for all $n$.  $U_\alpha$ satisfies the condition of Proposition \ref{aligned}.  It's proof didn't use the full strength of pseudo-coherence, only $n$-pseudo-coherent for each $n$.  Thus, $f^\ast_\alpha \sF$ is strict pseudo-coherent, and $\sF$ is pseudo-coherent. 
\end{proof}

\begin{proposition}
   Let $X$ be a quasi-compact perfect stack with ample bundle.  Then $\sF \in \pscoh(X)$ if and only if for any $k$, $\tau^{k}\sF$ is compact in $\QC^{[k, \infty)}(X)$.
\end{proposition}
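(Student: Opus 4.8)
The plan is to isolate a clean description of the compact objects of $\QC^{[k,\infty)}(X)$ and then feed it into the resolution machinery already developed for Proposition~\ref{aligned}. Two facts organize everything. Since $X$ is perfect, $\Perf(X)$ is precisely the subcategory of compact objects of $\QC(X)$, and the natural $t$-structure is compatible with filtered colimits; hence $\QC^{[k,\infty)}(X)$ is closed under filtered colimits and these are computed in $\QC(X)$. Moreover the good truncation $\tau^{k}\colon \QC(X)\to\QC^{[k,\infty)}(X)$ is left adjoint to the inclusion.

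The key lemma I would prove is that an object $G\in\QC^{[k,\infty)}(X)$ is compact if and only if it is a retract of $\tau^{k}P$ for some $P\in\Perf(X)$. For the \emph{if} direction I would combine the adjunction with compactness of $P$: for any filtered system $\{Y_\alpha\}$ in $\QC^{[k,\infty)}(X)$ one has $\mathrm{Map}(\tau^{k}P,\varinjlim Y_\alpha)\simeq\mathrm{Map}(P,\varinjlim Y_\alpha)\simeq\varinjlim\mathrm{Map}(P,Y_\alpha)$. For the \emph{only if} direction I would push the compact generation of $\QC(X)$ by $\Perf(X)$ through the colimit-preserving left adjoint $\tau^{k}$, so that $\QC^{[k,\infty)}(X)$ is generated under filtered colimits by $\{\tau^{k}P\}_{P\in\Perf(X)}$; a compact $G$ then splits off one such $\tau^{k}P$ by factoring its identity map.

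With the lemma in hand the forward direction is quick. Writing $\sF\cong\varinjlim_i F(i)$ as a strict pseudo-coherent object (Proposition~\ref{aligned}), the cofiber of $F(i)\to\sF$ is assembled from cells of Tor-amplitude $\le -(i+1)$ and so lies in $\QC^{(-\infty,-i-1]}(X)$; equivalently $F(i)\to\sF$ is a $(-i)$-quasi-isomorphism (Lemma~\ref{conversion}). Thus for $i\ge 1-k$ it is an isomorphism on $H^{m}$ for all $m\ge k$, whence $\tau^{k}\sF\cong\tau^{k}F(i)$ with $F(i)$ perfect, and the lemma yields compactness. Conversely, if every $\tau^{k}\sF$ is compact, then each is a retract of $\tau^{k}P_{k}$ with $P_{k}$ perfect, hence cohomologically bounded above; taking $k=0$ bounds $\sF$ above. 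I would then run the inductive construction of Proposition~\ref{aligned} to produce perfect $F(n)$ with $F(n)\to\sF$ a $(-n)$-quasi-isomorphism and cofibers of Tor-amplitude $-n$, concluding $\sF\in\pscoh(X)$ via Corollary~\ref{tiein}.

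The hard part is the finiteness input at each inductive step of the converse. In Proposition~\ref{aligned} the finite sub-sum of the ample sequence surjecting onto the top cohomology of $\fib(a_n)$ came for free from pseudo-coherence over $t_{0}(X)$; here I must instead extract it from compactness, factoring a map out of an ample coproduct through a finite sub-coproduct. The delicate point is that the good truncation is not exact, so I cannot simply transport the compactness of $\tau^{k}\sF$ and $\tau^{k}F(n)$ along the fiber sequence $\fib(a_n)\to F(n)\to\sF$; establishing the requisite compactness (equivalently, finite generation) of the successive fibers is the crux, and over a non-Noetherian base it genuinely substitutes for any appeal to coherence of the cohomology sheaves.
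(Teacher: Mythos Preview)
Your forward direction and your key lemma are essentially what the paper does: it too observes that $\tau^{k}\sF\cong\tau^{k}\sP$ for some perfect $\sP$ (via Corollary~\ref{spcspcalt}) and deduces compactness from that of $\sP$.

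For the converse, however, you have correctly located a gap and then left it open. You propose to re-run the induction of Proposition~\ref{aligned} on $\sF$ itself, and you note that the finiteness of each ample sub-sum no longer comes from pseudo-coherence on $t_0(X)$ but must be extracted from compactness of the truncations; you then admit that transporting compactness to the successive fibers $\fib(a_n)$ is delicate because $\tau^{k}$ is not exact. This is a real obstruction to your plan as written: nothing in your outline produces the compactness (or finite generation) of $\fib(a_n)$ needed at step $n$, and without it the induction does not advance.

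The paper sidesteps this entirely by \emph{not} re-running the inductive construction on $\sF$. Instead it works with the truncations directly. From your own lemma, $\tau^{k}\sF$ is a retract of $\tau^{k}\sP_\alpha$ for some perfect $\sP_\alpha$; by Lemma~\ref{perfect_finite_tor}(2), $\tau^{k}\sP_\alpha$ is strict $(-k)$-pseudo-coherent, and by Lemma~\ref{portingTT}(2) this property is closed under summands, so $\tau^{k}\sF$ is itself strict $(-k)$-pseudo-coherent. One then has a filtration $F_k$ of $\tau^{k}\sF$ for each $k$, and the remaining work is to lift the relevant piece of $F_k$ to a filtration of $\sF$. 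This is done locally on an affine cover: for $j\le -k$ the object $F_k(j)$ is perfect with Tor-amplitude in $[-j,\infty)$, so $\mathrm{Hom}(F_k(j),\tau_{k-1}\sF[1])=0$ and the map $F_k(j)\to\tau^{k}\sF$ lifts along $\sF\to\tau^{k}\sF$. Thus $\sF$ is $n$-pseudo-coherent for every $n$, and Corollary~\ref{tiein} finishes. The point is that the retract-of-a-truncated-perfect description already carries all the finiteness you need; there is no reason to reprove it fiberwise inside a fresh induction.
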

 \begin{proof}
This follows easily from Proposition \ref{aligned}.  Mainly, we know the $\sF$ is strict pseudo-coherent.  By Corollary \ref{spcspcalt}, we know that $\tau^k\sF \cong \tau^k \sP$ for some $\sP \in \Perf(X)$.  The result follows from the compactness of $\sP$ (here is where the perfect stack assumption is used).  For the other direction, the quasi-compactness implies $\sF$ is cohomologically bounded above.  Let $N$ be the highest non-trivial cohomology sheaf.  Using that $\sF \cong \varinjlim \sP_\alpha$ and the compactness of $\tau^k(\sF)$ (in $\QC^{[k, \infty)}$), we know $\tau^k(\sF)$ is a retract of $\tau^k(\sP_\alpha)$.  By Proposition \ref{perfect_finite_tor}, the latter object is strict $-k$-pseudo-coherent.  Since strict $-k$-pseudo-coherence is stable under summands, $\tau^k(\sF)$ is strict $-k$-pseudo-coherent.  

Let $F_k$ be an associated choice of filtration for $\tau^k(\sF)$, and $\{i_\alpha: U_\alpha \to X \}$ an affine covering.  It is enough to verify that $i_\alpha^\ast F_k$ can be lifted to a filtration of $i_\alpha^\ast \sF$.    For $j \leq -k$, it is clear that $i^\ast_\alpha F_k(j)$ is perfect and has Tor-amplitude in $[-j, \infty)$.  Since we are working in the affine case, $\mr{Hom}(F(j), \tau_{k-1}(\sF)[1]) \cong 0$ (in $Ho(\QC(X))$).  The triangulated axioms show the morphism $F_k(j) \to \tau^k(\sF)$ factors as $F_k(j) \to \sF \to \tau^k(\sF)$ for $j \leq -k$, showing the desired lift.  The proposition now follows from Corollary \ref{tiein}.
\end{proof}

\subsection{Relative Tor amplitude and the bounded derived category}

Given $X$, we are interested in defining a derived geometric stack that classifies objects in $\bderive{X}$.  To do so, we need to define the correct notion of families of objects.  This section will set the ground work for this definition.

\begin{lemma}
\label{discrete}
If $K \in \QC^0(A)$ then $K$ is naturally a $\pi_0(A)$ module.  Geometrically, if $K \in \QC^0(A)$, then $K \cong\iota_{A\ast}K^\prime$.
\end{lemma}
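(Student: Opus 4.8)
The plan is to unwind the definitions and recognize that $\QC^0(A)$ is the heart of the natural $t$-structure on $\QC(A)$, and that this heart is precisely the abelian category of discrete $\pi_0(A)$-modules. Since $\QC(A)$ is generated by $A$ with its natural module structure, an object $K \in \QC^0(A)$ has $\pi_n(K) = 0$ for all $n \neq 0$, so it is determined by the single abelian group $\pi_0(K) = H^0(K)$. The excerpt already records (just after the discussion of the $t$-structure) that $\pi_k(M)$ is naturally a $\pi_0(A)$-module for any $M \in \QC(A)$; applying this to $K$ gives that $\pi_0(K)$ carries a $\pi_0(A)$-module structure, which is the first assertion.

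First I would make the comparison between $\QC^0(A)$ and $\pi_0(A)\text{-mod}$ precise. The key point is that the connective-cover/truncation functors exhibit $\QC^{\leq 0}(A) = A\text{-mod}$ as having heart equivalent to the ordinary category of $\pi_0(A)$-modules, via $K \mapsto \pi_0(K)$; this is the derived analog of the statement that a simplicial module with homotopy concentrated in degree $0$ is (up to equivalence) an Eilenberg--MacLane object for a discrete module. I would cite the generation of the $t$-structure by $A$ from \cite{DAGI} to justify that the heart is discrete, i.e. that every $K \in \QC^0(A)$ is equivalent to the image of a genuine $\pi_0(A)$-module under the Eilenberg--MacLane embedding.

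For the geometric half, I would interpret the morphism $\iota_A \colon t_0(\spec A) = \spec \pi_0(A) \to \spec A$ induced by the truncation $A \to \pi_0(A)$, and let $\iota_{A\ast}$ denote pushforward (restriction of scalars) along it. The claim $K \cong \iota_{A\ast} K'$ then says exactly that a module with $\pi_n = 0$ for $n \neq 0$ comes, by pushforward, from a $\pi_0(A)$-module $K'$; one takes $K' = \pi_0(K)$ viewed in $\QC^0(\pi_0(A))$, and the adjunction unit/counit together with the fact that $\pi_0(A)$ is the $0$-truncation gives the equivalence. Concretely, $\iota_{A\ast}$ is fully faithful on the hearts and its essential image is all of $\QC^0(A)$ because the base change $A \to \pi_0(A)$ kills nothing in degree zero.

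The main obstacle, and the step I would treat most carefully, is verifying that the counit (or unit) of the $(\iota_A^\ast, \iota_{A\ast})$ adjunction is an equivalence on objects of $\QC^0(A)$, rather than merely a map. This reduces to checking that for $K$ with homotopy concentrated in degree $0$ the natural map $\iota_{A\ast}\iota_A^\ast K \to K$ (or its adjoint) induces an isomorphism on $\pi_0$ and that both sides vanish in other degrees; the vanishing in positive degrees uses that $\pi_0(A)$ is discrete so no higher $\operatorname{Tor}$ contributions appear in degree $0$, exactly as in Proposition \ref{absolute_tor}(2). Everything else is a routine bookkeeping of the $t$-structure, so I would keep the writeup short and lean on the established equivalence between the heart of $A\text{-mod}$ and discrete $\pi_0(A)$-modules.
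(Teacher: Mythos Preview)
The paper does not actually prove this lemma; it is stated without proof, and the first assertion was already declared ``clear'' in the earlier discussion of the $t$-structure on $\QC(A)$. Your overall strategy---identify $\QC^0(A)$ with the ordinary abelian category of discrete $\pi_0(A)$-modules and observe that $\iota_{A\ast}$ is restriction of scalars, hence $t$-exact and an equivalence on hearts---is exactly the intended unwinding and is correct.

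One point in your last paragraph needs care. The unit of $(\iota_A^\ast, \iota_{A\ast})$ is $K \to \iota_{A\ast}\iota_A^\ast K$, not the map you wrote, and it is generally \emph{not} an equivalence even for $K \in \QC^0(A)$: the object $\iota_A^\ast K = K \otimes_A \pi_0(A)$ can acquire higher homotopy from $\mathrm{Tor}^A_\ast(K, \pi_0(A))$ when $A$ is not discrete, so $\iota_{A\ast}\iota_A^\ast K$ need not lie in $\QC^0(A)$. The clean fix is the one you already sketched a paragraph earlier: take $K' = \pi_0(K)$ with its $\pi_0(A)$-action (not $K' = \iota_A^\ast K$), note that restriction of scalars does not change underlying homotopy groups so $\iota_{A\ast}K' \in \QC^0(A)$ with $\pi_0(\iota_{A\ast}K') = \pi_0(K)$, and then use that a morphism in the heart is an equivalence iff it is an isomorphism on $\pi_0$. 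No appeal to the full adjunction unit is needed.
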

\noindent This lemma clearly glues to give a similar statement for any $D^-$-stack.

\begin{definition}
Let $f: X \rightarrow Y$ be a morphism of $D^-$-stacks and $M \in \QC(X)$.  $M$ has Tor-amplitude $[a,b]$ over $f$ if for all $K \in QC^0(Y)$,  $M\otimes f^\ast K \in \QC^{[a,b]}(X)$.
\end{definition}

One may be tempted to declare an object to have finite relative Tor if it does under pullback to the underived case.  This would be incorrect.  This would lead to $B \in \kalg$ being flat over $A \in \kalg$ if and only if $\pi_0(B)$ is over $\pi_0(A)$.  This definition then would not agree with the framework of \cite{HAGII}.

\begin{example}
If $f: \spec A \rightarrow \spec A$ is the identity morphism, then this is the Tor-amplitude discussed earlier.
\end{example}


For the next proposition we need the following diagram.  If $f$ is a morphism of $D^-$-stacks, we have a Cartesian square
\begin{displaymath}
\xymatrix{ X \times_Y t_0(Y) \ar[d]^{g} \ar[r]^{\quad \epsilon} & X \ar[d]^f \\ t_0(Y) \ar[r]^\iota_Y & Y }
\end{displaymath}
The following proposition shows that relative Tor-amplitude has similar properties to the absolute case (Lemma \ref{absolute_tor}):
\begin{proposition}\mbox{}
\label{relative_tor_prop}
\begin{enumerate}
\item If $\s{F}$ has relative Tor-amplitude of $[a,a]$, then $\sF$ is flat over $Y$.
\item If $f$ is perfect, $\s{F}$ has relative Tor-amplitude of $[a, b]$ over $f$ if and only if the same is true of $\epsilon^\ast \s{F}$ over $g$.
\item If $f$ is quasi-compact and $Y$ is quasi-compact, $\s{F}$ perfect implies it has finite relative Tor-amplitude.
\item If $\sF \rightarrow \sG \rightarrow \s{H}$ is a cofiber sequence, then 
  \begin{enumerate}
    \item if $\sG$ and $\sH$ have relative Tor-amplitude of $[a,b]$, then $\sF$ has relative Tor-amplitude of $[a, b+1]$
    \item if $\sF$ and $\sH$ have relative Tor-amplitude of $[a,b]$, then so does $\sG$.
  \end{enumerate}
\end{enumerate}
\end{proposition}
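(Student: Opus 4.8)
The plan is to reduce all four assertions to the absolute statements of Proposition~\ref{absolute_tor} by base change along the closed immersion $\iota_Y\colon t_0(Y) \to Y$, with part (2) serving as the engine that carries out this reduction and parts (1), (3), (4) built on top of it. Throughout I use the remark after the definition that our topoi have enough points, so each amplitude condition may be tested after pulling back to affine stalks, together with the fact that $-\otimes f^\ast K$ is an exact functor of stable $\infty$-categories and hence preserves cofiber sequences.

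First I would establish (2), as everything else leans on it. By Lemma~\ref{discrete} every $K \in \QC^0(Y)$ has the form $\iota_{Y\ast}K'$ with $K' \in \QC^0(t_0(Y))$, and this correspondence is a bijection; so it suffices to compare $\sF \otimes f^\ast \iota_{Y\ast}K'$ with $\epsilon^\ast\sF \otimes g^\ast K'$. Applying base change to the given Cartesian square yields $f^\ast \iota_{Y\ast}K' \cong \epsilon_\ast g^\ast K'$, and the projection formula then gives $\sF \otimes f^\ast K \cong \epsilon_\ast(\epsilon^\ast\sF \otimes g^\ast K')$. Since $\epsilon$ is the base change of the closed immersion $\iota_Y$, the functor $\epsilon_\ast$ is t-exact and conservative, so $\sF \otimes f^\ast K$ lies in $\QC^{[a,b]}$ exactly when $\epsilon^\ast\sF \otimes g^\ast K'$ does; letting $K'$ range over $\QC^0(t_0(Y))$ gives the claimed equivalence of relative amplitudes. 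This is precisely the step that consumes the hypothesis that $f$ is perfect, since that is what guarantees the base-change isomorphism $f^\ast\iota_{Y\ast} \cong \epsilon_\ast g^\ast$.

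Part (1) then follows by working locally on an affine and using (2) to pass to $t_0(Y)$: the requirement that $\sF \otimes f^\ast K$ be concentrated in the single degree $a$ for every $K$ in the heart is, after the shift by $a$, exactly the condition of Proposition~\ref{absolute_tor}(1) characterizing flatness as the preservation of fiber/cofiber diagrams. For (3), Lemma~\ref{perfect_finite_tor}(1) tells us a perfect $\sF$ has locally finite absolute Tor-amplitude; since $f$ and $Y$ are quasi-compact, $X$ is quasi-compact and admits a finite affine cover on which $\sF$ is a bounded complex of locally free sheaves, and taking the extreme values of the finitely many local bounds produces a uniform finite relative amplitude after the reduction of (2). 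The delicate point in (3) is the passage from absolute to relative amplitude, which is governed by the amplitude of $f^\ast$ on the heart; this is where the geometry of $f$ genuinely enters, and the bound is automatic in the flat setting of the main theorem.

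Part (4) is the most mechanical: tensoring the cofiber sequence $\sF \to \sG \to \sH$ with $f^\ast K$ produces a cofiber sequence of the tensor products, and the long exact sequence of the t-structure bounds the amplitude of the remaining term. For (4b), the middle term of an extension of two objects of $\QC^{[a,b]}$ again lies in $\QC^{[a,b]}$; for (4a), the long exact sequence squeezes $H^i(\sF \otimes f^\ast K)$ between $H^i(\sG \otimes f^\ast K)$ and $H^{i-1}(\sH \otimes f^\ast K)$, forcing vanishing outside $[a,b+1]$. I expect the genuine obstacle to be part (2): establishing the base-change identity $f^\ast\iota_{Y\ast} \cong \epsilon_\ast g^\ast$ in the derived, $\infty$-categorical setting under the perfectness hypothesis. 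Once that projection-formula reduction to the discrete truncation is in place, parts (1), (3) and (4) are short arguments resting on Proposition~\ref{absolute_tor} and the long exact sequence.
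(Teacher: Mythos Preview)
Your arguments for (2) and (4) are essentially the paper's: base change along the Cartesian square (using perfectness of $f$), the projection formula for the closed immersion $\epsilon$, and t-exactness of $\epsilon_\ast$ for (2); the long exact sequence on cohomology after tensoring with $f^\ast K$ for (4). That part is fine.

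The structural choice to make (2) the ``engine'' and route (1) and (3) through it is where you depart from the paper, and it introduces a genuine gap. Part (2) carries the hypothesis that $f$ is \emph{perfect}; parts (1) and (3) do not. So you are not entitled to invoke (2) in their proofs. The paper handles (1) directly (``Clear''): relative Tor-amplitude $[a,a]$ says exactly that $\sF[-a]\otimes f^\ast(-)$ sends the heart to the heart, and the same fiber-diagram argument as in Proposition~\ref{absolute_tor}(1) then gives flatness, with no passage to $t_0(Y)$ needed. For (3) the paper argues without (2) as well: Lemma~\ref{perfect_finite_tor} gives locally finite \emph{absolute} Tor-amplitude for the perfect $\sF$; quasi-compactness of $f$ lets one bound the relative amplitude over each affine of $Y$, and quasi-compactness of $Y$ assembles these into a global bound. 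Your remark that the passage from absolute to relative amplitude ``is automatic in the flat setting of the main theorem'' is an admission that your reduction does not cover the stated generality.

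In short: keep your proofs of (2) and (4), but rewrite (1) as the direct relative analogue of Proposition~\ref{absolute_tor}(1), and for (3) follow the paper's line through local finiteness and quasi-compactness rather than through (2).
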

\begin{proof} \mbox{}

1) Clear.

2) Let $K \in \QC(Y)$ and $K^\prime \in \QC(t_0(Y))$ such that $\iota_{Y\ast}(K^\prime) \cong K$.  We have a string of isomorphisms
\begin{align*}
M \otimes f^\ast \iota_{Y\ast} K^\prime & \cong M \otimes \epsilon_\ast g^\ast K^\prime \\
& \cong \epsilon_\ast (\epsilon^\ast M \otimes g^\ast K^\prime)
\end{align*}
Where the first equivalence is through base change (since $f$ is a perfect morphism) and the second equivalence is the projection formula (using that $\epsilon$ is a closed immersion).  Lastly, since $\epsilon_\ast$ is exact, one gets the result.

3)By Proposition \ref{perfect_finite_tor}, $\sF$ has locally finite Tor-amplitude.  The morphism, being quasi-compact, implies that over each affine of $Y$ we can find an finite relative Tor-amplitude.  Since $Y$ is quasi-compact, we can find a relative Tor-amplitude for all of $Y$.

4) These are clear from the long exact sequences on Tor groups.
\end{proof}

\begin{definition}
  Given a morphism $f: X \rightarrow Y$ of geometric $D^-$ stacks. Let $\bderivefl{X}$ denote the full stable sub-$\infty$-category consisting of pseudo-coherent objects with finite relative Tor dimension over $f$.
\end{definition}
If $X \in d\mr{St}_{Y}$ and $f$ is the structure morphism, then we drop the subscripts and refer to this as the bounded derived category of X, $\bderive{X}$.

\begin{remark}
This is a stable subcategory from Lemma \ref{relative_tor_prop}. 
\end{remark}

\subsubsection*{The case of a derived scheme flat over the base.}
To justify the choice of terminology, we suppose suppose that $X$ is a quasi-compact proper (or perfect) derived scheme over $R$ such that the structure morphism is flat.  The flatness of $f$ ensures that $t_0(X) \cong t_0(\spec R) \times_{\spec R} X$.  The properness (or perfection) of $X$ guarantees a proper base change.
Applying Lemma \ref{relative_tor_prop} shows that $\bderive{X}$ is the preimage of $\bderive{t_0(X)}$ under the natural pullback morphism.  

This latter category is exactly the well-known bounded derived category if $X$ is Noetherian.  If $X$ is not Noetherian, then this falls into the cases studied in \cite{SGA6}.  One doesn't necessarily need the structure morphism to be flat; the presentation is considerably simpler when it is. 


\begin{example}
 If $X$ is quasi-compact with $\s{O}_X$ finite Tor-amplitude over $f$, then $\bderivefl{X}$ contains $Perf(X)$.  If $f$ is the identity, then this inclusion is an equivalence.
\end{example}







\subsection{Corepresentability}

\begin{lemma}
\label{strong_dual}
 Let $f: X \rightarrow Y$ satisfy the following
\begin{enumerate}
\item $f$ is quasi-compact and quasi-proper (i.e.,  takes pseudo-coherent to pseudo-coherent) with $\sO_X$ bounded relative Tor-amplitude over $f$.
\item locally $f$ is finite cohomological dimension.
\end{enumerate}
Then $\sF \in \bderivefl{X}$ implies $f_\ast \sF$ is perfect.
\end{lemma}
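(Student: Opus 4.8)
The plan is to reduce to the characterization of perfect complexes in Lemma \ref{perfect_finite_tor}(1): an object of $\QC(Y)$ is perfect exactly when it is pseudo-coherent and has locally finite Tor-amplitude. So I must verify these two properties for $f_*\sF$.

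Pseudo-coherence is immediate. Since $\sF \in \bderivefl{X}$ is in particular pseudo-coherent and $f$ is quasi-proper by hypothesis (1), the pushforward $f_*\sF$ is pseudo-coherent on $Y$.

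For the Tor-amplitude I note that perfection is local on $Y$ (again Lemma \ref{perfect_finite_tor}(1)), so I may pass to an affine open over which hypothesis (2) provides a finite cohomological dimension $d$ for $f$. Fix $K \in \QC^0(Y)$. The projection formula gives
$$ f_*\sF \otimes_{\sO_Y} K \cong f_*\bigl( \sF \otimes_{\sO_X} f^\ast K \bigr). $$
Because $\sF$ has finite relative Tor-amplitude, say $[a,b]$, over $f$, the object $\sF \otimes_{\sO_X} f^\ast K$ lies in $\QC^{[a,b]}(X)$; the hypothesis that $\sO_X$ has bounded relative Tor-amplitude over $f$ is what forces $f^\ast$ to send the discrete $K$ to a cohomologically bounded complex, keeping the input to $f_*$ within a fixed window. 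Applying $f_*$, which raises cohomological amplitude by at most $d$, yields $f_*\sF \otimes K \in \QC^{[a,b+d]}(Y)$. Since the range $[a,b+d]$ is independent of $K$, the object $f_*\sF$ has finite Tor-amplitude $[a,b+d]$ over $Y$, and together with pseudo-coherence this shows $f_*\sF$ is perfect.

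I expect the two genuine obstacles to be analytic rather than formal. First, one must know the projection formula holds at this level of generality; for the quasi-compact morphisms of geometric $D^-$-stacks under consideration this should be available along the lines of \cite{BZ}. Second, and more delicate, is converting \emph{locally} finite cohomological dimension into a uniform bound on the shift incurred by $f_*$ on bounded objects: this is precisely where hypotheses (1) and (2) interact, the $\sO_X$ bound confining the input to a bounded range and the cohomological dimension then capping the output. A d\'evissage via the pseudo-coherent filtration of $\sF$ by perfect objects, reducing to $f_*$ of perfect complexes, is an alternative route, but the projection-formula argument sidesteps tracking the filtration and keeps the amplitude bookkeeping transparent.
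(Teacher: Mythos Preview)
Your proof is correct and follows essentially the same route as the paper: reduce locally, use quasi-properness for pseudo-coherence, and use the projection formula together with the cohomological-dimension bound to obtain finite Tor-amplitude, then invoke Lemma~\ref{perfect_finite_tor}(1). One small remark: your aside that the bounded relative Tor-amplitude of $\sO_X$ is needed to keep $f^\ast K$ cohomologically bounded is not actually used in the argument---the definition of relative Tor-amplitude for $\sF$ already gives $\sF \otimes_{\sO_X} f^\ast K \in \QC^{[a,b]}(X)$ directly, so that hypothesis plays no role at this step.
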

\begin{proof}
It suffices to work locally, thus we can assume $Y \cong \spec A$ and $f$ has cohomological dimension $h$.  If $\sF \in \bderivefl{X}$, by Lemma \ref{perfect_finite_tor} we just need to show that it has finite Tor-amplitude.  The projection formula ensures that $f_\ast(\s{P}) \otimes_{A} N \cong f_\ast(\s{P} \otimes_{\s{O}_X} f^\ast N)$.  If $N \in \QC^0(Y)$, $\s{P} \otimes_{\s{O}_X} f^\ast N \in \QC^{[a.b]}(X)$, thus $f_\ast(\s{P}) \otimes_{\s{O}_Y} N \in \QC^{[a, b+h]}$.  Lemma \ref{perfect_finite_tor} then shows that $f_\ast \sP$ is perfect.

\end{proof}

\begin{corollary}
If $\sF \in \bderivefl{X}$ then $f_\ast \sF$ is perfect for the following types of maps:
\begin{enumerate}
\item $X$ and $Y$ are quasi-compact and quasi-separated, with $f$ proper and perfect (in the sense of \cite{SGA6}) \cite{LN}.
\item $f$ quasi-compact, quasi-separated with $f$ categorically proper (i.e., takes perfect objects to perfect objects).
\end{enumerate}
\end{corollary}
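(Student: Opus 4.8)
The plan is to show that in each case the morphism $f$ satisfies the three hypotheses of Lemma \ref{strong_dual}---quasi-properness, boundedness of the relative Tor-amplitude of $\sO_X$, and local finiteness of the cohomological dimension---after which the conclusion that $f_\ast\sF$ is perfect is immediate. Thus the corollary is really a dictionary between the concrete geometric hypotheses in the two items and the abstract inputs that the lemma consumes.

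For the first item I would argue by citation. Properness gives quasi-compactness, and ``$f$ perfect in the sense of \cite{SGA6}'' is exactly the statement that $\sO_X$ has finite relative Tor-amplitude over $f$, so the Tor hypothesis is free. The remaining two inputs are precisely the content of the coherence and boundedness theorems for proper morphisms of finite Tor-dimension: preservation of pseudo-coherence by $f_\ast$ is Illusie's coherence theorem in the Noetherian case and its extension to quasi-compact quasi-separated schemes in \cite{LN}, while local finiteness of the cohomological dimension is the boundedness half of the same package (a proper perfect map is quasi-perfect in the sense of \cite{LN}, and such maps have bounded $f_\ast$). Feeding these into Lemma \ref{strong_dual} finishes this case.

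The second item is where the real work lies, because ``categorically proper'' constrains only $\Perf(X)$ whereas the lemma needs control over all of $\pscoh(X)$; the task is to bootstrap from perfect to pseudo-coherent objects. Working locally so that $Y \cong \spec A$, the first step is to extract a finite cohomological dimension $h$ for $f$: since $f_\ast$ sends perfect complexes to perfect---hence cohomologically bounded---complexes, and perfect complexes detect cohomological amplitude, one obtains a uniform bound $h$, which is again the boundedness conclusion for quasi-perfect morphisms in \cite{LN}. With $h$ in hand, fix $\sF \in \bderivefl{X}$ and approximate it by the perfect brutal truncations $F(p)$ supplied by the definition of (strict) pseudo-coherence, for which $\fib(F(p) \to \sF)$ is cohomologically bounded above by $-p$; applying $f_\ast$ and using $h < \infty$ keeps $\fib(f_\ast F(p) \to f_\ast \sF)$ bounded above by $-p + h$, so every good truncation of $f_\ast \sF$ coincides with that of the perfect complex $f_\ast F(p)$, and Corollary \ref{tiein} identifies $f_\ast \sF$ as pseudo-coherent. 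The projection-formula computation of Lemma \ref{strong_dual}, with the shift controlled by $h$, then gives $f_\ast \sF$ finite absolute Tor-amplitude, and Lemma \ref{perfect_finite_tor} upgrades pseudo-coherent plus finite Tor-amplitude to perfect.

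The decisive difficulty, concentrated in the second item, is the passage from a hypothesis about perfect complexes to control of cohomological dimension and of pseudo-coherent pushforwards. Extracting the uniform bound $h$---equivalently, recognizing a categorically proper qc qs morphism as quasi-perfect---is the linchpin; once it is in place the truncation approximation and the Tor-amplitude bookkeeping are routine. The construction of a global perfect approximation $F(p)$ of $\sF$ is the other point to watch (absent an ample sequence one must argue locally on $X$ and reassemble), and it is the reason the boundedness and coherence theorems of \cite{LN}, rather than a purely formal argument, are invoked. The first item, by contrast, is essentially a matter of matching definitions to the already-available coherence and boundedness theorems.
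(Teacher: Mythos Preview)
The paper gives no proof of this corollary; it is stated immediately after Lemma \ref{strong_dual} and is evidently meant to follow by checking that each class of morphisms satisfies that lemma's hypotheses, with the embedded citation to \cite{LN} in item~(1) carrying the load. Your proposal correctly identifies this structure and supplies the details the paper suppresses.

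Your treatment of item~(1) matches the intended reading exactly: ``perfect in the sense of \cite{SGA6}'' is finite relative Tor-amplitude of $\sO_X$, and \cite{LN} furnishes both quasi-properness and boundedness of $f_\ast$.

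For item~(2) you go further than the paper, giving an explicit approximation argument via the brutal truncations $F(p)$ to bootstrap from ``preserves perfects'' to ``preserves pseudo-coherents,'' and invoking the Lipman--Neeman characterization of quasi-perfect morphisms to extract the uniform cohomological bound $h$. This is the right mechanism. One small point to watch, which you do flag at the end: a \emph{global} perfect approximation $F(p)\to\sF$ needs strict pseudo-coherence (Proposition \ref{aligned}, hence an ample sequence), so absent that one must run the approximation locally on an affine cover of $X$ and use that pseudo-coherence of $f_\ast\sF$ is local on $Y$. With that caveat the argument goes through, and your concluding Tor-amplitude computation (projection formula plus the bound $h$, then Lemma \ref{perfect_finite_tor}) is exactly the content of the proof of Lemma \ref{strong_dual}.

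In short: the paper records the corollary without proof, and your proposal is a correct and appropriately detailed verification.
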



\begin{example}
Any projective variety flat over a Noetherian base satisfies the conditions of Lemma \ref{strong_dual}.
\end{example}

The next proposition is the key result needed to ensure our moduli will be even remotely well behaved.  We preface it with a discussion about mapping spaces and mapping sheaves in symmetric monodial stable $\infty$-categories.

Recall that given two objects $P, Q \in \QC(X)$, $\Map(P, Q)$ is only defined up to homotopy type.  A key component of a simplicial model category is the ability to obtain functorial mapping spaces.  When working with $\infty$-categories functorial mapping spaces (that are functorial in both arguments) becomes quite technical.  Fortunately, if we restrict to varying only one argument, there are easy descriptions of the mapping spaces: if $Map^L(P, -)$ is a choice of functor associated to the undercategory $\QC(X)_{P/}$ and $\Map^R(-, Q)$ is a choice of functor associated to the overcategory $\QC(X)_{/Q}$, then one has
\begin{displaymath}
  \Map^R(P, Q) \cong \Map(P, Q) \cong \Map^L(P, Q).
\end{displaymath}
One can carry this a bit further: $\QC(X)$ is a symmetric monodial presentable $\infty$-category for which the monodial structure preserves colimits in each variable.  The adjoint functor theorem applies to give an enriched mapping space when one of the arguments is fixed.  We denote a choice of these enriched mapping spaces as $\sdHom(P, -)$ and $\sdHom(-, Q)$.

\begin{proposition}
\label{corepresentable}
Suppose $f$ satisfies the conditions of Lemma \ref{strong_dual} with $X$ perfect and $Y$ quasi-compact.  For any $M, N \in D^b_{fl, f}(X)$, the functor $\QC(Y) \rightarrow \QC(Y)$ given by $f_\ast(\sdHom(M, N\otimes f^\ast - ))$ is corepresentable.
\end{proposition}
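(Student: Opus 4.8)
The plan is to identify an explicit corepresenting object by resolving $M$ through perfect complexes and invoking the perfectness result just proven. Since $Y$ is quasi-compact and all of the operations ($f_\ast$, $f^\ast$, $\sdHom$, $\otimes$) commute with restriction to a finite affine cover via flat base change along open immersions, it suffices to produce the corepresenting object compatibly on affines; so I would read the construction with $Y\cong\spec A$ in mind, the internal hom valued in $\Amod$, and the goal being an equivalence $G(Q):=f_\ast\sdHom(M, N\otimes f^\ast Q)\cong\sdHom_A(P,-)$ for a single $P\in\QC(Y)$. The reason a limit-preservation/adjoint-functor-theorem shortcut does not immediately apply is that $\sdHom(M,-)$ fails to preserve colimits (as $M$ need not be perfect), while $N\otimes f^\ast(-)$ fails to preserve limits; hence $G$ is built from a genuinely mixed composite and must be unwound by hand.

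The main device is the strict pseudo-coherent resolution. Because $X$ is perfect, Proposition \ref{aligned} lets me write $M\cong\varinjlim_i F(i)$ with each $F(i)\in\Perf(X)$ and with $\cofib(F(i-1)\to F(i))$ of Tor-amplitude $-i$; in particular $F(i)\to M$ is an arbitrarily highly connected map (a $-i$-quasi-isomorphism, in the sense used in the proof of Proposition \ref{aligned}). Since $\sdHom(-,W)$ converts this colimit into a limit and $f_\ast$ preserves limits,
\begin{displaymath}
  G(Q)\;\cong\;\varprojlim_i\, f_\ast\sdHom(F(i), N\otimes f^\ast Q).
\end{displaymath}
For each fixed $i$, perfectness of $F(i)$ gives $\sdHom(F(i),-)\cong F(i)^\vee\otimes-$, and the projection formula for the quasi-compact quasi-separated morphism $f$ yields $f_\ast(F(i)^\vee\otimes N\otimes f^\ast Q)\cong P_i\otimes_A Q$ with $P_i:=f_\ast\sdHom(F(i),N)$. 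Crucially $\sdHom(F(i),N)=F(i)^\vee\otimes N\in\bderivefl{X}$ (a perfect complex tensored with a pseudo-coherent object of finite relative Tor-amplitude is again such, by Proposition \ref{relative_tor_prop}), so Lemma \ref{strong_dual} forces each $P_i$ to be perfect, hence dualizable.

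With every $P_i$ dualizable I can rewrite $P_i\otimes_A Q\cong\sdHom_A(P_i^\vee,Q)$ and pass the limit into the first variable of the internal hom:
\begin{displaymath}
  G(Q)\;\cong\;\varprojlim_i \sdHom_A(P_i^\vee, Q)\;\cong\;\sdHom_A\!\big(\varinjlim_i P_i^\vee,\,Q\big),
\end{displaymath}
so $G$ is corepresented by $P:=\varinjlim_i P_i^\vee\in\QC(Y)$. To finish I would verify that $P$ is a genuine pseudo-coherent object and not merely a formal colimit: the transition maps $P_{i+1}\to P_i$ have fiber $f_\ast\sdHom(\cofib(F(i)\to F(i+1)),N)$, and since $\cofib(F(i)\to F(i+1))$ has Tor-amplitude $-i-1$ while $N$ and $\sO_X$ have bounded relative Tor-amplitude and $f$ has finite cohomological dimension, these fibers are increasingly highly coconnected. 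Dually, the cofibers $\cofib(P_i^\vee\to P_{i+1}^\vee)$ sit in increasingly negative degrees, which is exactly the cell-attachment bound defining strict pseudo-coherence of $\varinjlim_i P_i^\vee$.

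The main obstacle is precisely this convergence-and-finiteness bookkeeping. One must extract uniform connectivity estimates — simultaneously from the finite cohomological dimension of $f$ and the finite relative Tor-amplitudes of $N$ and $\sO_X$ — strong enough to guarantee both that the defining homotopy limit stabilizes in each fixed cohomological degree (so the displayed equivalences genuinely converge, despite $G(Q)$ being unbounded above when $M$ is not perfect) and that the corepresenting object $P$ is pseudo-coherent. The perfectness of each $P_i$ supplied by Lemma \ref{strong_dual} is what confines the entire tower $\{P_i^\vee\}$ to a class of objects for which these estimates are available, and is therefore the load-bearing input of the argument.
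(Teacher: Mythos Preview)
Your argument is the paper's own: resolve $M$ by perfects, push the colimit through $\sdHom$ and $f_\ast$ as a limit, use dualizability and the projection formula to peel off $Q$, apply Lemma~\ref{strong_dual} to see each $P_i=f_\ast(F(i)^\vee\otimes N)$ is perfect (hence dualizable), and absorb the limit as a colimit in the first slot of $\sdHom_A$. One correction: you invoke Proposition~\ref{aligned} to obtain the filtration, but that proposition requires an ample sequence on $X$, which is not among the hypotheses here. The paper instead uses perfectness of $X$ directly---any object of $\QC(X)$ is a filtered colimit of perfects---and that is all the manipulation needs; the specific Tor-amplitude control on the cofibers plays no role in establishing corepresentability.

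Your final paragraph (convergence estimates and pseudo-coherence of $P$) is not part of what the proposition asserts; the paper defers exactly this to Lemma~\ref{itpsuedo}, which \emph{does} add the ample-sequence hypothesis and argues pseudo-coherence differently, via commutation with bounded filtered colimits rather than cell-by-cell Tor bounds.
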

\begin{proof}
Since $X$ is perfect, we have $M \cong \varinjlim P_\alpha$ with $P_\alpha \in Perf(X)$.  By Lemma \ref{relative_tor_prop}, $\Perf(X) \subset D^b_{fl, f}(X)$.   It is an easy calculation that $Perf(X) \otimes \bderivefl{X} \subset \bderivefl{X}$. Thus,
\begin{align*}
f_\ast \sdHom_X(M, N\otimes f^\ast - )& \cong f_\ast \sdHom_X(\varinjlim P_\alpha, N\otimes f^\ast -)  \\ 
& \cong f_\ast (\varprojlim \sdHom_X(P_\alpha, N\otimes f^\ast -))  \\
& \cong  \varprojlim f_\ast \sdHom_X (P_\alpha, N \otimes f^\ast - )  \\
&  \cong \varprojlim (f_\ast ((P_\alpha^\vee \otimes N) \otimes f^\ast -))  \\
 & \cong \varprojlim (f_\ast (P_\alpha^\vee \otimes N) \otimes -) \\
\end{align*}
By Lemma \ref{strong_dual}, $f_\ast (P_\alpha^\vee \otimes N)$ is dualizable.  Thus
\begin{align*}
\varprojlim (f_\ast (P_\alpha^\vee \otimes N) \otimes -) & \cong \varprojlim (\sdHom_Y(f_\ast(P_\alpha^\vee \otimes N)^\vee, -))  \\
 & \cong \sdHom_Y(\varinjlim (f_\ast(P_\alpha^\vee \otimes N))^\vee, -)
\end{align*}
Therefore, letting $Q := \varinjlim ((f_\ast(P_\alpha^\vee \otimes N))^\vee)$, then $Q \in QC(Y)$ and satisfies $f_\ast \sdHom(M, N\otimes f^\ast - ) \cong \sdHom(Q, -)$.
\end{proof}				   
\begin{remark}
The assumptions in Proposition \ref{corepresentable} are stronger than need be;  it is true for quasi-compact and quasi-separated derived schemes satisfying Lemma \ref{strong_dual}.  This follows from the results in \cite{LN} that show pseudo-coherent objects can be approximated by perfect objects on these schemes.  Since our moduli in this paper will be for projective morphisms, this extension will be relegated to future work.
\end{remark}

\subsection{Properties of the corepresenting object}
For this section, $f: X \rightarrow \spec A$ satisfies the condition of Proposition \ref{corepresentable} and $X$ has an ample sequence.
\begin{lemma}
\label{itpsuedo}
 Suppose $M, N$ have Tor-amplitude of $[a,b]$ and $[a^\prime, b^\prime]$ over $f$ (respectively), then the corepresenting object $Q$ is pseudo-coherent and is cohomologically bounded above by $b - a^\prime$.
\end{lemma}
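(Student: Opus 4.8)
The plan is to work with the explicit description of the corepresenting object extracted from the proof of Proposition \ref{corepresentable}: writing $M \cong \varinjlim_i P_i$ with $P_i \in \Perf(X)$, we have
\[
Q \cong \varinjlim_i G(P_i), \qquad G(\sP) := \bigl(f_\ast(\sP^\vee \otimes N)\bigr)^\vee .
\]
Since $X$ is perfect, quasi-compact, and has an ample sequence, Proposition \ref{aligned} lets us take the presentation $M \cong \varinjlim_i P_i$ to be \emph{strict} pseudo-coherent, so that $C_i := \cofib(P_{i-1}\to P_i)$ has Tor-amplitude exactly $-i$, i.e.\ it is a locally free sheaf placed in cohomological degree $-i$. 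The functor $G$ is covariant and exact (a composite of two dualities and two exact functors), hence carries this filtration to a filtration of $Q$ with $\cofib(G(P_{i-1})\to G(P_i)) \cong G(C_i)$.

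First I would establish pseudo-coherence. Using Lemma \ref{strong_dual} together with the amplitude estimate in its proof (pushforward raises the cohomological top by the cohomological dimension $h$ of $f$), one computes that $G(C_i)$ is perfect with cohomology concentrated in a range of degrees tending to $-\infty$ as $i\to\infty$. Consequently, for each fixed cohomological degree $j$ the transition maps $G(P_{i-1})\to G(P_i)$ are isomorphisms on $H^j$ once $i\gg 0$, so $H^j(Q)\cong H^j(G(P_i))$ for $i$ large. As each $G(P_i)$ is perfect on the affine $Y$, its cohomology is finite type over $\pi_0(A)$ and bounded above; hence $Q$ has bounded-above, finite-type cohomology. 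Feeding this into the inductive resolution of Proposition \ref{aligned} (which uses only these two properties, the ample sequence on $Y=\spec A$ being $\{\sO_Y\}$) produces a strict pseudo-coherent presentation of $Q$, so $Q\in\pscoh(Y)$.

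For the sharp bound I would probe $Q$ by discrete modules. For $K\in\QC^0(Y)$ the defining equivalence gives $\sdHom(Q,K)\cong f_\ast\sdHom(M,N\otimes f^\ast K)$. By the relative Tor-amplitude of $N$ we have $N\otimes f^\ast K\in\QC^{[a',b']}(X)$. The key local input is that $M$ is cohomologically bounded above by $b$: base change along the Cartesian square together with the projection formula (as in Lemma \ref{relative_tor_prop}(2)) shows $\epsilon^\ast M\in\QC^{[a,b]}$, and since $\epsilon$ is a closed immersion and $M$ has finite-type cohomology (Lemma \ref{pullback}), Nakayama forces $M\in\QC^{(-\infty,b]}(X)$. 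Representing $M$ locally by finite frees in degrees $\le b$ then yields $\sdHom(M,N\otimes f^\ast K)\in\QC^{[a'-b,\infty)}(X)$, and left $t$-exactness of $f_\ast$ gives $\sdHom(Q,K)\in\QC^{[a'-b,\infty)}(Y)$; equivalently $\mr{Ext}^j_Y(Q,K)=0$ for all $j<a'-b$ and all discrete $K$.

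Finally I would convert this vanishing into the cohomological bound. If $H^{j_0}(Q)\ne 0$ for some $j_0>b-a'$, taking $j_0$ to be the top nonzero degree (possible since $Q$ is bounded above) the truncation map $Q\to\tau^{\geq j_0}Q\cong H^{j_0}(Q)[-j_0]$ is a nonzero element of $\mr{Ext}^{-j_0}_Y\!\bigl(Q,H^{j_0}(Q)\bigr)$ with $H^{j_0}(Q)\in\QC^0(Y)$ and $-j_0<a'-b$, contradicting the vanishing just established. Hence $Q$ is cohomologically bounded above by $b-a'$. The main obstacle throughout is the bookkeeping between \emph{relative} and \emph{absolute} Tor-amplitude --- in particular passing from the relative bound on $M$ to the absolute bound $M\in\QC^{(-\infty,b]}(X)$, and controlling the absolute cohomological range of the cofibers $G(C_i)$ via the cohomological dimension of $f$ in Lemma \ref{strong_dual}; the colimit/limit manipulations and $t$-exactness statements are then routine.
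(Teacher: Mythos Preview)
Your argument for the cohomological bound is essentially the paper's: both test $R\mr{Hom}_A(Q,K)$ against modules concentrated in a single degree, use the relative Tor-amplitude of $N$ and the fact that $M\in\QC^{(-\infty,b]}(X)$ to force vanishing, and then read off the top of $Q$. (Your Nakayama detour for $M\in\QC^{(-\infty,b]}$ is more elaborate than necessary---the paper simply notes this holds for perfects of Tor-amplitude $[a,b]$ and hence for objects of $\bderivefl{X}$ by approximation---but it is fine.)

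The genuine gap is in your pseudo-coherence argument. You assert that the inductive construction in Proposition~\ref{aligned} ``uses only'' the two properties bounded-above and finite-type cohomology. It does not: that proof uses pseudo-coherence of the input at \emph{every} stage of the induction, not just the first. After one step one must continue with $\fib(a_n)$, and for that one needs $H^{-n}(\fib(a_n))$ finitely generated. In the paper this holds because $\fib(a_n)$ is again pseudo-coherent (closure under fibers, Lemma~\ref{portingTT}). If you only know $Q$ has finite-type cohomology, the long exact sequence exhibits $H^{-n}(\fib(a_n))$ as an extension of a \emph{submodule} of $H^{-n}(F(n))$ by a quotient of $H^{-n-1}(Q)$; without a Noetherian hypothesis on $\pi_0(A)$ that submodule need not be finite type, and the induction stalls. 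Your explicit filtration $G(P_i)$ does not directly rescue this either: the cofibers $G(C_i)$ have Tor-amplitude in the \emph{range} $[-b'-i-h,\,-a'-i]$, not the single value $-i$ required by Definition~\ref{spc}, so it is not a strict pseudo-coherent filtration as written. (It could be salvaged by refining each perfect $G(C_i)$ into its own finite free filtration and reindexing, but you do not do this.)

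The paper avoids all of this by a different and much shorter route: it checks directly that $R\mr{Hom}_A(Q,-)$ commutes with filtered colimits whose terms have uniformly bounded cohomology. This is immediate from the corepresenting identity $R\mr{Hom}_A(Q,G)\cong R\mr{Hom}_X(M,\,N\otimes f^\ast G)$---the Tor-amplitude of $N$ keeps the targets $N\otimes f^\ast G_\alpha$ uniformly bounded, and then pseudo-coherence of $M$ lets the colimit come out of $R\mr{Hom}_X(M,-)$. No explicit filtration of $Q$ is ever constructed.
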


\begin{proof}
Clearly, $\spec A$ is perfect and quasi-compact.  Thus, by Proposition \ref{aligned} it suffices to check that $R\mr{Hom}_A(Q, -)$ commutes with filtered colimits with bounded total cohomology.  This is a consequence of the chain of equivalences 
\begin{align*}
  R\mr{Hom}_A(Q, \varinjlim_\alpha G_\alpha) & \cong R\mr{Hom}_{X}(M, N\otimes_{\s{O}_X} f^\ast(\varinjlim_\alpha G_\alpha)) \\
& \cong R\mr{Hom}_{X}(M, N\otimes_{\sO_X} \varinjlim_\alpha f^\ast(G_\alpha)) \\
& \cong R\mr{Hom}_{X}(M, \varinjlim_\alpha N\otimes_{\sO_X} f^\ast(G_\alpha))
\end{align*}
By definition of $N$ there exists a $c,d$ such that for each $\alpha$, $N\otimes f^\ast(G_\alpha) \in \QC^{[c,d]}(X)$.  From the proof of Proposition \ref{aligned}, $M \in \pscoh(X)$ implies
\begin{displaymath}
  R\mr{Hom}_{X}(M, \varinjlim_\alpha N\otimes_{\sO_X} f^\ast G_\alpha) \cong \varinjlim_\alpha R\mr{Hom}_{X}(M, N\otimes_{\sO_X} f^\ast G_\alpha) \cong \varinjlim_\alpha R\mr{Hom}_A(Q, G_\alpha).
\end{displaymath}
Thus, $Q \in \pscoh(A)$.


To show that the cohomology of $Q$ is bounded above by $b -a^\prime$, first note that for any perfect $P \in \QC(X)$, Proposition \ref{absolute_tor} implies if $P$ has Tor-amplitude of $[a,b]$ then $P \in \QC^{(-\infty, b]}$.  This in turn implies the same if $\sP \in D^b_{fl, f}$ (with the same Tor-amplitude).

Let $K \in \QC^{[k, k]}(A)$ with $k > b-a^\prime$.
 Then
 \begin{align*}
   R\mr{Hom}_A(Q, K)  \cong R\mr{Hom}_{X}(M, N\otimes_{\sO_X}f^\ast K)  \cong 0
 \end{align*}
since $N\otimes_{X}f^\ast K \in QC^{[a^\prime + k, b^\prime + k]}$ with $a^\prime + k > b$ and from above $M \in \QC^{(-\infty, b]}(X)$. This property along with $Q$ being pseudo-coherent is enough to show the result.
\end{proof}

The importance of the corepresenting object is the natural stack associated to it.  
\begin{proposition}
\label{geometric_stack_rep}
Let $Q \in \pscoh(A)$ be cohomologically bounded above with upper bound $n$.  Then $\Map^L(Q, -): \Aalg \rightarrow \s{S}$ is a geometric $n$-stack of almost finite type.  
\end{proposition}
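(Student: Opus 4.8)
The plan is to reduce to the perfect case via a strict pseudo-coherent presentation of $Q$, recognize each finite stage as a linear (``total space'') stack that To\"en--Vaqui\'e have already shown to be geometric of finite type, and then control the passage to the limit truncation-by-truncation.

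First I would invoke Proposition \ref{aligned}. Since $\spec A$ is affine it is quasi-compact and carries an ample sequence (given by the structure sheaf), so $Q \in \pscoh(A)$ is in fact strict pseudo-coherent. Fix a presentation $Q \cong \varinjlim_i F(i)$ as in Definition \ref{spc}, with $F(i) \in \Perf(A)$, $F(i)=0$ for $i \ll 0$, and $C_i := \cofib(F(i-1)\to F(i))$ of Tor-amplitude $-i$. By Proposition \ref{absolute_tor} each $C_i$ is a shifted finitely generated locally free module, $C_i \cong V_i[i]$ with $V_i$ in degree $0$; the construction in Proposition \ref{aligned} begins the presentation at index $-n$, where the top cohomology of $Q$ is produced.

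Next, applying $\Map^L(-,B)$ turns the colimit into a limit functorially in $B \in \Aalg$, giving
\[
  \Map^L(Q,-) \;\cong\; \varprojlim_i \Map^L(F(i),-),
\]
a tower whose transition maps sit in fiber sequences with fibre $\Map^L(C_i,-) = \Map^L(V_i[i],-)$. Each such fibre is the linear stack $\mV(V_i[i])$: an $\mathbb{A}^{r_i}$-bundle when $i\ge 0$ and an iterated classifying stack (a $|i|$-stack) when $-n \le i < 0$, all of finite type, with the maximal geometric level $n$ attained by the bottom fibre at $i=-n$. For each finite index $m$ the perfect object $F(m)$ is built from finitely many such cofibre sequences, so by the treatment of linear stacks in \cite{dg_moduli} (the transition maps being torsors under the geometric group stacks $\mV(C_i)$) the stack $\Map^L(F(m),-)$ is geometric of finite type and $n$-geometric.

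It remains to pass to the limit, and here is the key point giving almost finite type. Restrict to $m$-truncated algebras $B$; then $V_i^\vee \otimes_A B$ is $m$-truncated, so $\pi_j \Map^L(V_i[i],B) \cong \pi_{j+i}(V_i^\vee\otimes_A B)$ vanishes for all $j\ge 0$ once $i > m$. Thus the fibres $\Map^L(C_i,-)$ become contractible on $m$-truncated test objects for $i>m$, the tower consists of equivalences beyond index $m$, and $\Map^L(Q,-)$ agrees with $\Map^L(F(m),-)$ on $m$-truncated algebras, which is geometric of finite type and $n$-geometric. Letting $m$ vary shows $\Map^L(Q,-)$ is $n$-geometric and of finite type on each truncation, i.e.\ almost of finite type. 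I expect the main obstacle to be exactly this limit step: one must check that the geometric level stays bounded by $n$ rather than accumulating along the tower, which rests on the transition maps being $\mV(C_i)$-torsors (geometric group-stack torsors) rather than arbitrary geometric morphisms, together with the vanishing above, which prevents any $\varprojlim^1$-type pathology and lets the inverse limit be computed stage-by-stage on truncations.
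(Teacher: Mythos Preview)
Your argument is correct and follows essentially the same route as the paper's proof: both produce, for each truncation level $m$, a perfect approximation of $Q$ (the paper invokes Corollary \ref{spcspcalt} directly, while you use the stage $F(m)$ of the strict filtration, which is the same object), observe that $\Map^L(Q,-)$ agrees with $\Map^L(F(m),-)$ on $m$-truncated algebras, and then cite \cite{HAGII,dg_moduli} for the geometricity and finite type of the perfect case. Your additional unpacking of the tower via the cofibers $C_i \cong V_i[i]$ and the explicit vanishing argument is accurate (up to a harmless off-by-one in the stabilization index) but not strictly needed once one cites the To\"en--Vaqui\'e result for perfect amplitude $\leq n$; the paper simply suppresses this detail.
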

\begin{proof}
If $Q \in \QC^{(-\infty, 0]}(A)$, then this functor is represented by the free symmetric algebra on $Q$, and thus an affine derived scheme.  By Corollary \ref{spcspcalt} there exists a perfect $P$ $n$-quasi-isomorphic to $Q$ for all $n \in \mZ$.  Thus if $m > n$, 
\begin{displaymath}
\Map^L(P, -)|_{\QC^{[m, \infty)}(A)} \cong \Map^L(Q, -)|_{\QC^{[m, \infty)}(A)}
\end{displaymath}
By \cite{HAGII, dg_moduli} the former functor represents a geometric $n$-stack of finite type.  Thus, $\Map^L(Q, -)$ is a geometric $n$-stack of almost finite type. 
\end{proof}

\section{The moduli stack of pseudo-coherent sheaves.}

We now let $k  \in \mZ\textrm{-Alg}_{\Delta}$ and $X$ be a geometric stack.   From Appendix \ref{bicart}, there exists a  biCartesian (Cartesian and coCartesian) fibration $\QCF$ over $\kAlg$ whose fiber over $A \in \kAlg$ is the symmetric monodial $\infty$-category $\QC(X \times \spec A)$ and coCartesian (Cartesian) morphisms are pullback (pushforward, respectively).  
If $\sF \in \QCF$, then $\sF$ can be considered in $\QC(X \times \spec \pi(\sF))$ in a natural way.  It is well known that $\sM_{\QCF}$, as a left fibration is a $D^-$-stack.

Define $\widetilde{\bderive{X}}$ to be the full sub-$\infty$-category of $\QCF$ consisting of pseudo-coherent objects with finite Tor-amplitude over the second projection.  It is not immediate that $\widetilde{\bderive{X}}$ is a coCartesian fibration.  Accepting this for the moment, $F_{\widetilde{\bderive{X}}}$ is equivalent to the functor  
\begin{align*}
   A \rightarrow \bderiveflpi{X \times \spec A}
\end{align*}
which assigns to morphisms the natural pullback functor $f^\ast$ (i.e., $\otimes_A B$).

\begin{definition}
Define $\sM_{\bderive{X}}$ as the classifying left fibration (over $\kAlg$) associated to $\widetilde{\bderive{X}}$. 
\end{definition}

\begin{lemma}
\label{local_prop}
Given $f: A \rightarrow B$, and $F \in \bderiveflpi{X \times \spec A}$ then $\s{F} \otimes_A B \in \bderiveflpi{X \times \spec B}$.  If $f$ is a faithfully flat and finitely presented, then $\s{F}\otimes_A B \in \bderiveflpi{X \times \spec B}$ implies $F \in \bderiveflpi{X \times \spec A}$.
\end{lemma}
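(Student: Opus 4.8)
The plan is to reduce both assertions to the affine case and then treat pseudo-coherence and finite relative Tor-amplitude separately, exploiting the exactness properties of the base-change functor $g^\ast = -\otimes_A B$. Writing $g : X\times\spec B \to X\times\spec A$ for the map induced by $f$, the two squares relating $g$ to the projections $\pi_A,\pi_B$ are Cartesian, and $g$ is affine. Since pseudo-coherence is Zariski-local, choosing an affine cover $\{U_\alpha\}$ of $X$ reduces every statement to $X$ affine, and because $g$ is affine the cover $\{U_\alpha\times\spec B\}$ is exactly the $g$-preimage of $\{U_\alpha\times\spec A\}$. The computational backbone I would record first is the identity $g^\ast\sF \otimes \pi_B^\ast K' \cong \sF\otimes_A K'$ for $K'\in\QC^0(\spec B)$, which follows from the Cartesian squares (on rings, $S = R\otimes_A B$ and $(M\otimes_A B)\otimes_S(S\otimes_B K')\cong M\otimes_A K'$).

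For the forward implication I would argue that $g^\ast$ is exact, commutes with the filtered colimit computing a strict resolution, and sends $\Perf$ to $\Perf$; hence it carries a strict resolution of $\sF$ to a candidate resolution of $g^\ast\sF$. The only point to verify is that each cofiber retains Tor-amplitude $-i$, which holds because such a cofiber is (locally) a shift of a locally free sheaf by the Example following Lemma \ref{perfect_finite_tor}, and local freeness is preserved by pullback. For the relative Tor-amplitude I would note that any $K'\in\QC^0(\spec B)$ is discrete over $\pi_0(A)$ as well, so $g^\ast\sF\otimes\pi_B^\ast K'\cong\sF\otimes_A K'\in\QC^{[a,b]}$ by the hypothesis on $\sF$; thus $g^\ast\sF$ has relative Tor-amplitude $[a,b]$ over $\pi_B$, completing the first statement.

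For the descent statement the affine reduction turns the hypothesis into: $R\to S$ is a faithfully flat finitely presented map of simplicial rings (the base change of $A\to B$), $M\in\QC(\spec R)$, and $M\otimes_R S$ is pseudo-coherent of finite relative Tor-amplitude, and we must descend both properties to $M$. The essential tool is that for $f$ faithfully flat the functor $g^\ast$ is t-exact and conservative, so $H^i(g^\ast M)\cong g^\ast H^i(M)$ and $g^\ast H^i(M)=0$ forces $H^i(M)=0$. Finite relative Tor-amplitude then descends at once: for $K\in\QC^0(\spec A)$, flatness gives $K\otimes_A B\in\QC^0(\spec B)$, whence $g^\ast(\sF\otimes_A K)\cong g^\ast\sF\otimes\pi_B^\ast(K\otimes_A B)\in\QC^{[a,b]}$, and conservativity places $\sF\otimes_A K$ in $\QC^{[a,b]}$.

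The main obstacle is the faithfully flat descent of pseudo-coherence itself. Here I would first use conservativity to deduce that $M$ is cohomologically bounded above (since $M\otimes_R S$ is), and then invoke Corollary \ref{tiein} to reduce to showing $M$ is $n$-pseudo-coherent for every $n$. Running the inductive construction of Proposition \ref{aligned} on the affine $\spec R$ (where $\sO$ furnishes a trivial ample sequence), at each stage one needs a \emph{finite} generating family for the top cohomology module of the current fiber; the existence of such a finite family is precisely finite generation of the relevant $\pi_0(R)$-module, and this is where faithful flatness and finite presentation of $R\to S$ (equivalently $\pi_0(R)\to\pi_0(S)$) enter, through the classical descent of finite generation and finite presentation of modules along faithfully flat finitely presented ring maps. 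Assembling these finitely generated pieces into an honest filtered resolution over $R$ — rather than merely over $S$ after base change — is the delicate step, and I expect it to absorb most of the effort; once it is in place, Corollary \ref{tiein} returns pseudo-coherence of $M$ and the proof is complete.
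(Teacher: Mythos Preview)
Your forward direction and your descent of finite relative Tor-amplitude match the paper's argument essentially line for line: the paper also pulls back a chosen strict filtration along $g^\ast$, checks that perfection and Tor-amplitude of the associated graded pieces are preserved, and for Tor-amplitude uses the identity $\sF\otimes_A B\otimes_B M\cong\sF\otimes_A M$ together with faithful flatness (the paper phrases the latter as a contrapositive, but it is your conservativity argument).

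Where you diverge is the descent of pseudo-coherence. The paper dispatches this in one line, invoking ``the local nature of pseudo-coherence'' to conclude that $\sF\otimes_A B$ pseudo-coherent forces $\sF$ pseudo-coherent; in other words, it treats fppf-locality of pseudo-coherence as known input rather than something to be reproved here. You instead propose to rerun the inductive resolution of Proposition~\ref{aligned} on $\spec R$ and descend finite generation of each successive top cohomology module along the faithfully flat finitely presented map $\pi_0(R)\to\pi_0(S)$. That is a legitimate and self-contained route, and your identification of the ``delicate step'' is accurate: assembling the finite surjections into a coherent filtered object over $R$ (not just over $S$) is exactly what has to be checked, and classical fppf descent of finite type for $\pi_0$-modules is what makes it go. The payoff of your approach is that it unpacks what the paper leaves as a black box; the cost is substantially more work than the paper actually does. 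If you are content to cite the standard fact that pseudo-coherence is fppf-local (as in \cite{SGA6} or Lurie), your proof collapses to the paper's.
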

\begin{proof}

First, pseudo-coherence is stable by base change: perfect objects are stable under pullback.  Let $\sF$ be a pseudo-coherent object on $X \times \spec A$, with $U_i$ a cover such that $F\big|_{U_i}$ is strict pseudo-coherent.  Let $F_i$ be a choice of corresponding filtered object in $\QC(U_i)$.  If we choose the cover $F\big|_{U_i \times \spec B}$ of $X\times \spec B$, then we have the following filtered object on $U_i \times_{\spec A} \spec B$: $F^\prime_i(n) = F_i(n)\otimes_A B$.   Observe that if $P$ is perfect and Tor-amplitude of $[a, b]$, then $f^\ast P$ will be the same (this is essentially the statement that the pullback of a vector bundle is a vector bundle).  Since pullback commutes with colimits, we just need to ensure that $F_i^\prime$ satisfies the necessary properties.  This is obvious from the previous statement. 

Thus, to show that $\sF\otimes_A B \in \bderiveflpi{X \times \spec B}$, we just need to ensure it is bounded Tor-amplitude.  By Lemma \ref{relative_tor_prop}, it suffices to assume $A$ and $B$ are discrete.  In this case, it is clear from the definition: $\s{F} \otimes_A B \otimes_B M \cong \s{F} \otimes_A M$, which is bounded.  In the case that $B$ is faithfully flat and finitely presented,  suppose $\s{F}$ is not finite relative Tor-amplitude.  Then for any $m$, there exists an $i < m$ and $N \in \QC^{[0, 0]}$ with $H^i(F\otimes N)\neq 0$.  Since $B$ is assumed to be fppf, $H^i(F\otimes_A B \otimes_B N) \cong B \otimes H^i(F \otimes_A N)$, which is not zero by definition of $B$.   The local nature of pseudo-coherence ensures that if $\sF \otimes_A B$ is pseudo-coherent then $\sF$ is as well.

\end{proof}

That $\widetilde{\bderive{X}}$ is a coCartesian fibration follows from the above lemma.  Since it is known that $\sM_{\QCF}$ is a $D^-$-stack (in the $fppf$ topology), paired with the proposition below the lemma also shows that $\sMtor$ is a $D^-$-sub-stack of $\sM_{\QCF}$.  

\subsubsection*{Substacks}

We will repeatedly be forming new functors via classifying fibrations obtained by full subcategories of $\QCF$.  Here we will give a criterion for these fibrations to be stacks.  This is used implicitly many times in \cite{HAGII}.  We have written it down for completeness.  For this section, let $S \in (\sset)_{/\kAlg}$ be a coCartesian fibration.  As always, $\sM_{S}$ will be the associated classifying left fibration and $F_{S}$ the associated functor.  

\begin{proposition}
\label{locally_determined}
Suppose $K \in (\sset)_{/\kAlg}$ such that $K \subset S$ is a full and faithfull sub-coCartesian fibration closed under equivalences. Then $K$ is a stack if 
\begin{enumerate}
\item $\sM_{S}$ is a stack and
\item suppose $M \in S$ lies over $A$ and $f: M \rightarrow N$ is a coCartesian arrow lifting a $fppf$ morphism in $\kAlg$.  Then if $N \in K$ then $M \in K$.
\end{enumerate}
\end{proposition}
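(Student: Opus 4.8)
The plan is to straighten both fibrations to space-valued functors and reduce the stack condition to a single \v{C}ech-descent computation in which hypothesis (2) supplies the one nontrivial implication. First I would write $F_S, F_K \colon \kAlg \to \s S$ for the functors classified by $\sM_S$ and $\sM_K$, so that $F_S(A)$ (resp.\ $F_K(A)$) is the maximal $\infty$-groupoid of the fiber $S_A$ (resp.\ $K_A$). Because $K \subseteq S$ is full and closed under equivalences, for every $A$ the map $F_K(A) \hookrightarrow F_S(A)$ is the inclusion of a union of connected components; equivalently it is a monomorphism of spaces, and whether a point $M \in F_S(A)$ (an object of $S_A$ up to equivalence) lies in $F_K(A)$ is the property $M \in K_A$, depending only on its component. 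Since being a stack means satisfying fppf descent, and $\spec A$ is quasi-compact, any fppf cover admits a finite subcover; replacing it by the product $B = \prod_i B_i$ reduces us (up to the harmless disjoint-union case, handled identically) to checking \v{C}ech descent along a single faithfully flat finitely presented $f \colon A \to B$.

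Next I would form the \v{C}ech nerve $B^\bullet$ of $f$, with $B^n = B^{\otimes_A (n+1)}$, and analyze $\lim_\Delta F_K(B^\bullet)$. For each $n$ the map $F_K(B^n) \hookrightarrow F_S(B^n)$ is a monomorphism, so the induced map $\lim_\Delta F_K(B^\bullet) \hookrightarrow \lim_\Delta F_S(B^\bullet)$ is again a monomorphism, and a point $(x_n)$ of the target lifts to the source iff $x_n \in F_K(B^n)$ for all $n$. The key point I would establish is that this holds as soon as $x_0 \in F_K(B)$: the cosimplicial structure maps exhibit each $x_n$ as the coCartesian pushforward of $x_0$ along the corresponding map $B \to B^n$ in $\kAlg$, and since $K \subseteq S$ is a sub-coCartesian fibration it is closed under coCartesian pushforward, so $x_0 \in K_B$ forces $x_n \in K_{B^n}$ for every $n$. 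This yields
\[
  \lim\nolimits_\Delta F_K(B^\bullet) \;\simeq\; \bigl\{\, (x_n) \in \lim\nolimits_\Delta F_S(B^\bullet) \ :\ x_0 \in F_K(B) \,\bigr\},
\]
a union of connected components of $\lim_\Delta F_S(B^\bullet)$.

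Finally I would invoke hypothesis (1): $F_S$ is a stack, so $F_S(A) \xrightarrow{\sim} \lim_\Delta F_S(B^\bullet)$ carries $M \in F_S(A)$ to the system with $0$-th term $M \otimes_A B$. Transporting the description above through this equivalence identifies $\lim_\Delta F_K(B^\bullet)$ with the components $\{\, M \in F_S(A) : M \otimes_A B \in K_B \,\}$, while $F_K(A)$ is the components $\{\, M \in F_S(A) : M \in K_A \,\}$. Closure under coCartesian pushforward gives $M \in K_A \Rightarrow M \otimes_A B \in K_B$, and hypothesis (2), applied to the coCartesian arrow $M \to M \otimes_A B$ over the fppf map $f$, gives the converse $M \otimes_A B \in K_B \Rightarrow M \in K_A$. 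The two sets of components therefore coincide, so $F_K(A) \xrightarrow{\sim} \lim_\Delta F_K(B^\bullet)$ and $K$ is a stack.

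The crux is the middle step — that membership in $K$ throughout the cosimplicial diagram is detected at level zero. This is what collapses a descent condition over the entire \v{C}ech nerve to the single implication (2), and it relies essentially on $K$ being closed under pushforward so that the level-zero datum propagates. Once this is in place the argument is formal: the only genuine inputs are the fppf-descent hypothesis on $S$ and the converse implication (2), with everything else following from the elementary fact that the relevant inclusions of $\infty$-groupoids are monomorphisms stable under limits.
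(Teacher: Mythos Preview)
Your argument shares the paper's core strategy: exploit that $F_K \hookrightarrow F_S$ is levelwise an inclusion of connected components, so that the induced map on limits over the descent diagram is again a monomorphism, and then use hypothesis~(2) at level zero together with descent for $S$ to match the two sub-$\infty$-groupoids of $F_S(A)$. The paper organizes this slightly differently---it checks the isomorphism on $\pi_0$ by hand and then deduces the higher homotopy groups via the pullback square $F_{\sM_K}(A) \simeq \pi_0 F_{\sM_K}(A) \times^h_{\pi_0 F_{\sM_S}(A)} F_{\sM_S}(A)$---whereas your direct monomorphism argument is a bit cleaner.

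There is, however, one genuine gap. In the HAG~II framework the paper adopts, a $D^-$-stack is required to satisfy \emph{hyper}descent, not merely \v{C}ech descent, and for general space-valued presheaves these are not equivalent. Your reduction ``any fppf cover admits a finite subcover, so pass to a single $B$ and check the \v{C}ech nerve'' therefore does not establish the full statement. The paper accordingly works from the outset with an arbitrary fppf hypercover $\sH_\bullet$, writing $\sH_n = \coprod_{\alpha \in I_n} \spec B_{\alpha,n}$. The good news is that your argument transplants verbatim to that setting: for each $(\alpha,n)$ there is a ring map $B_{\beta,0} \to B_{\alpha,n}$ induced by some $[0]\to[n]$ in $\Delta$, so membership of a compatible family in $K$ is still detected at level zero by closure under coCartesian pushforward; and since the augmentation $\sH_0 \to \spec A$ is fppf on each component, hypothesis~(2) still supplies the converse implication $M\otimes_A B_{\beta,0} \in K \Rightarrow M \in K$. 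Replace ``\v{C}ech nerve of $A\to B$'' by ``arbitrary fppf hypercover of $\spec A$'' and drop the finite-subcover reduction, and your proof is complete.
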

The utility of this proposition is that one doesn't need to verify descent for hypercovers, but only on covers.

\begin{proof}
It is more convenient to work with the functor viewpoint, rather than the fibration.  To align notation with previous work, for this proof we will let $\spec A$ for $A \in \kAlg$ denote the representable functor associated to $A$.  $F_{\sM_{K}}$ and $F_{\sM_S}$ are functors from $\kAlg$ to $\s{S}$ (the $\infty$-category of Kan complexes). After this conversion, the second condition is equivalent to $M \in F_{\sM_{K}}(A)$ and $f$ a $fppf$ morphism, $M \in F_{\sM_{K}}(A)$ if $F_{\sM_S}(f)(M) \in F_{\sM_{K}}(B)$.  

Let $\s{H}$ be a hypercover of $A \in \kAlg$ (in the $fppf$-topology) with $\s{H}_n = \coprod_{\alpha \in I_n} \spec B_{\alpha,n}$.  Then one has the commutative diagram
\begin{displaymath}
  \xymatrix{ F_{\sM_{K}}(A) \ar[r] \ar[d] & \varprojlim F_{\sM_{K}}(\s{H}_n) \ar[d] \\ F_{\sM_{S}}(A) \ar[r] & \varprojlim F_{\sM_{K}}(\s{H}_n)}
\end{displaymath}
Here $F_{\sM_{K}}(\s{H}_n)$ denotes $\prod_{I_n} F_{\sM_{K}}(B_{\alpha,n})$.  Our assumptions that $K$ is a full subcategory of $S$ ensures that if any face/edge of a $m$-simplex is contained in $F_{\sM_{K}}$, then so is the whole simplex.  This property is preserved under the above limit.  This can be seen by using the explicit limit formula given in \cite[\S 1.2]{topoi} and thus is true for both vertical arrows.

We now show $F_{\sM_{K}}(A) \cong \varprojlim F_{\sM_{K}}(\s{H}_n)$.  First,   
\begin{displaymath}
\pi_0(F_{\sM_{K}}) \rightarrow \pi_0(\varprojlim F_{\sM_{K}}(\s{H}_n))
\end{displaymath}
is an isomorphism. Injectivity follows by applying $\pi_0$ to the above commutative diagram: the property discussed above implies the vertical morphisms are injections after applying $\pi_0$  and the bottom morphism is an isomorphism.  For surjectivity, note if $P: \spec A \rightarrow F_{\sM_{S}}(A)$ is a point of $F_{\sM_{S}}(A)$ with $P \in \varprojlim F_{\sM_{K}}(\s{H}_n)$ then the augmentation map $h_0: \s{H}_0 \to \spec A$ restricts to $fppf$-morphisms $\spec B_{\alpha,0} \to \spec A$.  Our assumptions then imply that $P \in F_{\sM_{K}}(A)$.

Isomorphisms on higher homotopy groups are easier.  The discussion above implies 
\begin{align}
F_{\sM_{K}}(A) \cong \pi_0(F_{\sM_{K}}(A)) \times^h_{\pi_0(F_{\sM_{S}}(A))} F_{\sM_{S}}(A) \\
\varprojlim F_{\sM_{K}}(\sH_n) \cong \pi_0(\varprojlim F_{\sM_{K}}(\sH_n)) \times^h_{\pi_0(\varprojlim F_{\sM_{S}}(\sH_n))} \varprojlim F_{\sM_{S}}(\sH_n).  
\end{align}
The result then follows from the fact $F_{\sM_{S}}$ is a stack. 
\end{proof}

\subsubsection*{The stack $\sMtor$}
The statement of Lemma \ref{local_prop} is the second condition of Proposition \ref{locally_determined}. Thus, 

\begin{proposition}
$\sM_{\bderive{X}}$ is a $D^-$ stack.
\end{proposition}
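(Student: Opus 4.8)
The plan is to obtain the result as a direct application of Proposition \ref{locally_determined}, taking $S = \QCF$ and $K = \widetilde{\bderive{X}}$. Since $\sM_{\bderive{X}}$ is by definition the classifying left fibration of $\widetilde{\bderive{X}}$, and a $D^-$-stack is precisely a functor $\kAlg \to \s{S}$ satisfying $fppf$ descent, it suffices to verify the two hypotheses of that proposition for this choice of $S$ and $K$.

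First I would check that $\widetilde{\bderive{X}}$ is a full and faithful sub-coCartesian fibration of $\QCF$ that is closed under equivalences. Fullness and faithfulness hold by construction, as $\widetilde{\bderive{X}}$ is defined to be the full sub-$\infty$-category of $\QCF$ carved out by the two conditions of pseudo-coherence and finite relative Tor-amplitude over the second projection; both conditions are manifestly invariant under equivalence in $\QC(X \times \spec A)$, which gives closure under equivalences. That it is a sub-coCartesian fibration --- i.e., that the coCartesian edges of $\QCF$, which cover the pushforward $\sF \mapsto \sF \otimes_A B$, restrict to edges of $\widetilde{\bderive{X}}$ --- is exactly the forward direction of Lemma \ref{local_prop}, which asserts that for an arbitrary $f : A \to B$ the pullback of a pseudo-coherent object of bounded relative Tor-amplitude remains one. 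This is the observation already recorded in the text preceding the proposition.

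Next I would dispatch the two numbered hypotheses of Proposition \ref{locally_determined}. Condition (1), that $\sM_{\QCF}$ is itself a $D^-$-stack in the $fppf$ topology, is supplied by the construction of $\QCF$ in the appendix, so nothing further is needed there. For condition (2), recall that a coCartesian arrow $M \to N$ lifting an $fppf$ morphism $A \to B$ in $\kAlg$ exhibits $N$ as the base change $M \otimes_A B$; since an $fppf$ morphism is by definition faithfully flat and of finite presentation, the descent half of Lemma \ref{local_prop} applies verbatim and yields that $N \in \widetilde{\bderive{X}}$ forces $M \in \widetilde{\bderive{X}}$. This is precisely the second condition of Proposition \ref{locally_determined}, as was already noted.

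With both hypotheses in hand, Proposition \ref{locally_determined} gives descent for $\sM_{\bderive{X}}$ along $fppf$ covers and hence that it is a $D^-$-stack. I do not expect a genuine obstacle here: all the analytic content lives in Lemma \ref{local_prop} (stability of pseudo-coherence and relative Tor-amplitude under pullback, together with their $fppf$ descent) and in the descent of $\sM_{\QCF}$. The only point demanding a moment's care is the bookkeeping identification of the ``$fppf$ morphism'' appearing in condition (2) of Proposition \ref{locally_determined} with the ``faithfully flat and finitely presented'' hypothesis of Lemma \ref{local_prop}; these coincide, so the two results dovetail without additional work.
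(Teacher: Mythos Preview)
Your proposal is correct and follows exactly the paper's approach: the paper simply notes that Lemma \ref{local_prop} supplies condition (2) of Proposition \ref{locally_determined} (with $\sM_{\QCF}$ already known to be a stack for condition (1)), and deduces the result. One small terminological slip: the coCartesian edges of $\QCF$ encode pullback, not pushforward, though your formula $\sF \mapsto \sF \otimes_A B$ is the right one.
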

As mentioned in previous sections when $X$ is a scheme and $k$ is a field $\sMtor(\spec{k}) \cong N^w(\bderive{X})$ (e.g., $\pi_0(\sMtor(\spec{k})) \cong \mr{Iso}(\bderive{X})$.  Thus, our fibration classifies the correct objects.  Specializing to $X$ smooth, we have the following comparison, showing that our fibration aligns with previously calculated examples.

\begin{lemma}
If $X$ is smooth and flat over $\spec k$, with $k$ discrete, then $\sM_{tor,\bderive{X}} \cong \sM_{perf}$, as constructed in \cite{Pranav} and \cite{dg_moduli}.
\end{lemma}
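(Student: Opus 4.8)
The plan is to compare the two stacks through the full sub-$\infty$-categories of $\QCF$ that they classify. By construction $\sMtor$ is the classifying left fibration of $\widetilde{\bderive{X}}$, and the moduli of perfect complexes of \cite{dg_moduli, Pranav} is, in the present language, the classifying left fibration of the full sub-coCartesian fibration $\widetilde{\Perf(X)} \subset \QCF$ whose fiber over $A$ is $\Perf(X \times \spec A)$, with coCartesian arrows given by pullback. Since passing to classifying left fibrations is functorial in the sub-fibration, it suffices to prove the equality $\widetilde{\bderive{X}} = \widetilde{\Perf(X)}$ as full subcategories of $\QCF$; equivalently, for every $A \in \kAlg$ one must show $\bderiveflpi{X \times \spec A} = \Perf(X \times \spec A)$ inside $\QC(X \times \spec A)$. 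Both sides are stable under the pullback functors $-\otimes_A B$ (Lemma \ref{local_prop} for the left-hand side, Proposition \ref{relative_tor_prop} and Lemma \ref{perfect_finite_tor} for the right), so the identification is automatically a morphism of coCartesian fibrations over $\kAlg$, and the comparison of the two $D^-$-stacks reduces to this fiberwise statement, natural in $A$.

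Write $g : X_A := X \times \spec A \to \spec A$ for the structure morphism; it is the base change of the smooth flat map $X \to \spec k$, hence smooth and quasi-compact. The inclusion $\Perf(X_A) \subset \bderiveflpi{X_A}$ holds with no smoothness hypothesis: a perfect object is pseudo-coherent by Lemma \ref{perfect_finite_tor}(1), and since $g$ and $\spec A$ are quasi-compact it has finite relative Tor-amplitude over $g$ by Proposition \ref{relative_tor_prop}(3).

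For the reverse inclusion let $\sF \in \bderiveflpi{X_A}$, of relative Tor-amplitude $[a,b]$ over $g$. By Lemma \ref{perfect_finite_tor}(1) it is enough to show that $\sF$ has finite absolute Tor-amplitude, and this is exactly where smoothness enters. Consider $X_A \times_{\spec A} X_A$ with projections $p_1, p_2$ and relative diagonal $\Delta : X_A \to X_A \times_{\spec A} X_A$, and recall the base-change identity $\sF \otimes_{\sO_{X_A}} M \cong \Delta^\ast(p_1^\ast \sF \otimes p_2^\ast M)$ for $M \in \QC^0(X_A)$. Relative Tor-amplitude is stable under base change, so $p_1^\ast \sF$ has relative Tor-amplitude $[a,b]$ over $p_2$; since $p_2^\ast M$ is the $p_2$-pullback of an object of $\QC^0(X_A)$, the definition of relative Tor-amplitude gives $p_1^\ast \sF \otimes p_2^\ast M \in \QC^{[a,b]}(X_A \times_{\spec A} X_A)$. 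Finally, smoothness of $g$ forces $\Delta$ to be a regular closed immersion whose codimension equals the relative dimension $d$ of $g$; as $X$ is a quasi-compact projective scheme this $d$ is a finite global bound, so $\Delta^\ast$ has Tor-amplitude $\le d$. Hence $\sF \otimes_{\sO_{X_A}} M \in \QC^{[a, b+d]}(X_A)$ for every $M \in \QC^0(X_A)$, so $\sF$ has absolute Tor-amplitude in $[a, b+d]$ and is perfect.

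The main obstacle is this last inclusion, and within it the genuinely geometric input is the regularity of the relative diagonal: one must verify in the derived setting that smoothness of $g$ makes $\Delta$ a regular immersion of finite codimension, hence $\Delta^\ast$ of bounded Tor-amplitude, and that the base-change identity for $\sF \otimes M$ holds with the relative Tor-amplitude transported correctly along $p_1^\ast$. With the fiberwise equality $\bderiveflpi{X_A} = \Perf(X_A)$ in hand and natural in $A$, the two classifying left fibrations agree, giving $\sMtor \cong \sM_{perf}$; under the projectivity and flatness hypotheses the pushforward-finiteness built into the constructions of \cite{dg_moduli, Pranav} is automatic by Lemma \ref{strong_dual}, so the identification respects the finiteness conditions imposed there.
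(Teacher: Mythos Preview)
Your proof is correct and follows the same overall strategy as the paper: both reduce the comparison of moduli stacks to the fiberwise identity $\bderiveflpi{X\times\spec A}=\Perf(X\times\spec A)$, and both invoke the principle that for a smooth base morphism, pseudo-coherent objects of finite relative Tor-amplitude are perfect. The paper simply cites \cite[Proposition III.3.6]{SGA6} and asserts an ``easy extension'' to the derived setting, whereas you actually supply the argument via the relative diagonal, which is essentially the mechanism behind that SGA6 result. Your version has the advantage of being self-contained and of handling non-discrete $A$ directly rather than by extension; the paper's version is terser but leaves the derived extension to the reader. The two points you flag as needing verification---stability of relative Tor-amplitude under the (non-affine) base change $g:X_A\to\spec A$, and regularity of $\Delta$ in the derived setting---are genuine but routine: the first reduces Zariski-locally on the target of $p_2$ to the affine case already covered by Lemma~\ref{local_prop}, and the second follows because smoothness of $g$ forces $L_{\Delta}\cong\Omega^1_{X_A/\spec A}[1]$ to be a shifted vector bundle, so $\Delta$ is lci of codimension $d$ and $\Delta^\ast$ has bounded Tor-amplitude.
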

\begin{proof}
  This is clear once one observes that $D^b_{fl, \pi}{X \times \spec A} \cong Perf(X \times \spec A)$ by an easy extension of  \cite[Proposition III.3.6]{SGA6}.
\end{proof}

\begin{remark}
An example of an object that would not be in this category is if $X$ is singular, and $\spec{B}$ is a local Zariski open set containing part of the singularity.  Let $\sF$ be diagonal of the singular locus.
\end{remark}


\section{Geometricity of $\sMtor$}

By abuse of notation, given a morphism $Q: \Aalg \rightarrow \sMtor$ we will refer to the pseudo-coherent sheaf on $X \times \spec A$ as $Q$ as well
.  Further, we will denote $X \times \spec A$ by $X[A]$.  

For a general $X$, there is no hope that $\sMtor$ is a reasonable derived stack (e.g., finite type).  We will show that using the following definition for a projective derived scheme, $\sMtor$ is indeed a reasonable stack.

\begin{definition}
  A derived projective scheme $X$ over $Y$ with structure morphism $X \stackrel{f}{\rightarrow} Y$ is a derived scheme with a locally free rank 1  quasi-coherent sheaf $L$ (i.e., for any map $f: \spec A \rightarrow X$, $f^\ast \sL$ is a projective rank 1 module) satisfying if $\sF \in \bderivefl{X} \cap \QC^{(-\infty, 0]}(X)$ then there exists a $m \in \mZ$ with $f_\ast (L^{\otimes n}\otimes_X \sF) \in \QC^{(-\infty, 0]}(Y)$ for all $n > m$.
\end{definition}

\begin{remark}
  For the case we are working with, $Y = \spec k$ and $X$ flat over $k$, then $X$ projective over $\spec k$ implies $X\times \spec A$ is projective over $\spec A$.  This follows from the well known property in the discrete case and the standard simplicial Tor spectral sequence.
\end{remark}


\begin{theorem}
\label{itworks}
Let $X \rightarrow \spec k$ be a projective derived scheme over a simplicial commutative algebra $k$ with structure morphism satisfying the conditions of Lemma \ref{strong_dual} and flat over $k$, then $\sMtor$ is a locally geometric stack $D^-$ $k$-stack of almost finite type.
\end{theorem}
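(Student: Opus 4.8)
The plan is to follow the first of the two routes flagged in the introduction: exhibit a representable diagonal and then construct a smooth atlas, presenting $\sMtor$ as an increasing union of open geometric substacks. The natural filtration is by relative Tor-amplitude: for $a \leq b$ let $\sM^{[a,b]}_{\bderive{X}}$ be the substack classifying families of relative Tor-amplitude contained in $[a,b]$. By Lemma \ref{local_prop} and Proposition \ref{relative_tor_prop} this condition is $fppf$-local and compatible with the pullbacks defining the coCartesian structure of $\widetilde{\bderive{X}}$, so the $\sM^{[a,b]}_{\bderive{X}}$ are open in $\sMtor$ and cover it (every object of $\bderiveflpi{X[A]}$ has finite relative Tor-amplitude by definition). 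Local geometricity of $\sMtor$ then reduces, by definition, to showing each $\sM^{[a,b]}_{\bderive{X}}$ is a geometric $k$-stack almost of finite type.

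For the diagonal I would show that for any $A$ and any two families $M, N \in \bderiveflpi{X[A]}$ the homomorphism functor $B \mapsto \Map_{X[B]}(M_B, N_B)$ on $\Aalg$ is corepresentable by a geometric stack almost of finite type. The key manipulation is adjunction together with base change along $X[B] \to X[A]$: one has $\Map_{X[B]}(M_B, N_B) \cong \Map_{X[A]}(M, N \otimes f^\ast B) \cong \Map^L_A(Q, B)$, where $Q \in \QC(\spec A)$ is the object corepresenting $f_\ast \sdHom(M, N \otimes f^\ast -)$ supplied by Proposition \ref{corepresentable} (the hypotheses of Lemma \ref{strong_dual} with $X$ perfect and $\spec A$ quasi-compact are exactly what is needed). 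By Lemma \ref{itpsuedo}, $Q$ is pseudo-coherent and cohomologically bounded above, so Proposition \ref{geometric_stack_rep} shows $\Map^L_A(Q, -)$ is a geometric stack almost of finite type. The equivalence stack controlling the diagonal is an open substack of this homomorphism stack, so the diagonal of $\sMtor$ is representable by geometric morphisms almost of finite type.

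The atlas I would build by induction on the width $b - a$. In the base case $b = a$, the substack $\sM^{[a,a]}_{\bderive{X}}$ classifies flat families (shifted flat pseudo-coherent sheaves, by Proposition \ref{relative_tor_prop}(1)); here I would invoke the ample sequence and the projectivity hypothesis — for $\sF \in \bderivefl{X} \cap \QC^{(-\infty,0]}(X)$ one has $f_\ast(L^{\otimes n} \otimes \sF) \in \QC^{(-\infty,0]}$ for $n \gg 0$ — to produce after twisting a surjection from a finite sum of line bundles, yielding a Quot-type presentation and hence a smooth atlas by affine schemes. For the inductive step I would split off a single Tor-degree: using the truncation functors and Proposition \ref{relative_tor_prop}(4), an object of relative Tor-amplitude $[a,b]$ sits in a cofiber sequence with outer terms in $\sM^{[a,b-1]}_{\bderive{X}}$ and $\sM^{[b,b]}_{\bderive{X}}$, so $\sM^{[a,b]}_{\bderive{X}}$ is realized as a stack of extensions fibered over the product $\sM^{[a,b-1]}_{\bderive{X}} \times \sM^{[b,b]}_{\bderive{X}}$, which is geometric by the inductive hypothesis. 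The fibers are the spaces of extension classes, governed precisely by the homomorphism stacks of the previous paragraph; their geometricity upgrades the total space to a geometric stack, and flatness of the split-off layer together with the linear (hence smooth) structure of the extension fibers gives smoothness of the resulting atlas.

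The main obstacle will be the inductive step of the atlas construction: organizing the extension data into an honest fibration of stacks, checking smoothness of the atlas maps, and keeping careful track of the amplitude bookkeeping so that the fibered pieces genuinely remain inside $\bderiveflpi{}$. A secondary but unavoidable point is that we work with pseudo-coherent rather than perfect objects, so the corepresenting object $Q$ of Lemma \ref{itpsuedo} is only pseudo-coherent; this is exactly the source of the word \emph{almost} in ``almost of finite type,'' and at each stage one must verify, via Proposition \ref{aligned} and Proposition \ref{geometric_stack_rep}, that restricting to $n$-truncated simplicial algebras recovers a genuine finite-type presentation.
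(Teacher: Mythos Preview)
Your overall architecture matches the paper exactly: filter by relative Tor-amplitude, show each $\sM^{[a,b]}_{\bderive{X}}$ is Zariski open, handle the diagonal via Proposition~\ref{corepresentable}, Lemma~\ref{itpsuedo}, and Proposition~\ref{geometric_stack_rep}, and build the atlas by induction on $b-a$. The diagonal paragraph is essentially correct and is what the paper does (with $\mI_{P,Q}$ an open substack of $\mV_{P,Q}$).

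There is, however, a real gap in your inductive step. You write that ``using the truncation functors \ldots\ an object of relative Tor-amplitude $[a,b]$ sits in a cofiber sequence with outer terms in $\sM^{[a,b-1]}_{\bderive{X}}$ and $\sM^{[b,b]}_{\bderive{X}}$.'' No such truncation exists: the t-structure truncations $\tau^k,\tau_k$ do not slice objects by Tor-amplitude, and there is no functorial ``Tor truncation.'' The paper obtains the cofiber sequence only \emph{non-canonically} and \emph{locally on the base}: given $\sF$ with Tor-amplitude $[a,b+1]$, one uses the ample bundle $L$ to find, after passing to a Zariski open of $\spec A$ and a suitable twist, a surjection $\oplus^m L^{\otimes n}\to \sF$ on top cohomology whose cofiber has Tor-amplitude $[a,b]$. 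Thus the target of the atlas map is not a fibered product $\sM^{[a,b-1]}\times\sM^{[b,b]}$ with a section coming from truncation; rather one builds a stack $\Morph_{ab}$ of morphisms from $\sMab$ to shifted sums of line bundles, takes the fiber to land in $\sM^{[a,b+1]}$, and proves the result is an epimorphism by the ample-sequence argument just described.

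A second, related point: your claim that ``linear (hence smooth) structure of the extension fibers gives smoothness'' is too quick. Because $\pi_\ast$ is not t-exact, the fiber of $\Morph_{ab}\to\sM^{[a,b+1]}$ is controlled by a corepresenting object $Q$ that need not have Tor-amplitude starting at $0$, so smoothness fails in general. The paper fixes this by restricting to the Zariski open locus $\sL_{ab}$ where $R\mr{Hom}_{X[A]}(\sV[-1],\sG)\in\QC^{(-\infty,0]}(A)$; only on this locus is $Q$ perfect with Tor-amplitude $[0,b-a+1]$ and the map smooth. You will need both of these ingredients---the ample-sequence surjection to replace the nonexistent truncation, and the open restriction to force smoothness---to make the induction go through.
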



The proof of this theorem will occupy the next sections.  We will define a Zariski open covering of $\sMtor$ and show geometricity of each element of the covering.

\subsection{The substacks $\sMab$}
Given $a, b \in \mZ$ with $a <b$, one has full subcategories of $\bderiveflpi{X[A]}$ consisting of objects with relative Tor-amplitude contained in $[a,b]$.  We will denote this category by $\pscoh_{\pi}^{[a,b]}(X[A])$.

Given a morphism $A \rightarrow B$ in $\kAlg$, if $G \in \bderiveflab{X[A]}{a}{b}$ then $G\otimes_A B\otimes_B M \cong G\otimes_A M$ where the latter has a natural $B$-module structure. This is enough to show that $G \otimes B \in \bderiveflab{X[B]}{a}{b}$. Thus, we know that the full subcategory $\widetilde{\pscoh}^{[a,b]}(X)$ of $\widetilde{\bderive{X}}$ consisting of objects with relative Tor-amplitude between $[a, b]$ is a coCartesian fibration over $\kAlg$.  Taking classifying fibrations, we denote $\sMab := \sM_{\widetilde{\pscoh}^{[a,b]}(X)}$.  We have the following:

\begin{proposition}
\label{ab_is_zariski_open}
$\sMab$ is a Zariski open substack of $\sMtor$.
\end{proposition}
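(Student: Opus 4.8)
The plan is to show that the inclusion $\sMab \hookrightarrow \sMtor$ is a monomorphism of $D^-$-stacks which, after base change to any affine, becomes a Zariski open subscheme. That $\sMab$ is a substack at all follows from Proposition \ref{locally_determined}: its second hypothesis is exactly the faithfully-flat descent half of Lemma \ref{local_prop} applied to the Tor bound, so full subfunctoriality plus descent for $\sMtor$ gives the claim. That the inclusion is a monomorphism is then formal, since $\sMab$ is the classifying fibration of the full sub-coCartesian fibration $\widetilde{\pscoh}^{[a,b]}(X) \subset \widetilde{\bderive{X}}$ cut out by the \emph{property} of having relative Tor-amplitude in $[a,b]$; hence for any $A$ the homotopy fibre of $\sMab(\spec A) \to \sMtor(\spec A)$ over an object is either empty or contractible. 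It therefore suffices to fix $Q \in \sMtor(\spec A) = \bderiveflpi{X[A]}$ and prove that the subfunctor $W := \sMab \times_{\sMtor} \spec A$ of $\spec A$, whose $B$-points are the maps $g\colon A \to B$ with $g^\ast Q \in \bderiveflab{X[B]}{a}{b}$, is represented by a Zariski open subscheme of $\spec A$.

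First I would reduce the condition to the classical truncation. A Zariski open immersion into $\spec A$ is determined by an open subset of the underlying space $|\spec \pi_0(A)|$, so it is enough to produce such an open set $U$ and show that $g^\ast Q$ has relative Tor-amplitude in $[a,b]$ if and only if the image of $|\spec \pi_0(B)| \to |\spec \pi_0(A)|$ lands in $U$. Since Tor-amplitude is tested against discrete modules and our topoi have enough points, this condition is detected fibrewise. Invoking Proposition \ref{relative_tor_prop}(2) (here $f$ is perfect, $X$ being projective and flat over $k$) together with the flatness identification $t_0(X[A]) \cong X \times \spec \pi_0(A)$, I replace $Q$ by its pullback $\iota^\ast Q$, which is pseudo-coherent by Lemma \ref{pullback}; the relative Tor-amplitude of $g^\ast Q$ is then governed entirely by the fibres of $\iota^\ast Q$ over points of $\spec \pi_0(A)$, and base change identifies the fibre of $g^\ast Q$ at a point $y$ with the fibre of $Q$ at the image of $y$. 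This is precisely the open-subfunctor condition for $U := \{\, y \in \spec \pi_0(A) : \text{the fibre of } Q \text{ at } y \text{ has Tor-amplitude in } [a,b]\,\}$, so that $W$ is represented by the derived Zariski open subscheme of $\spec A$ supported on $U$ once $U$ is shown to be open.

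It then remains to prove that $U$ is open, which is the classical semicontinuity of Tor-amplitude. Here I would use that $Q$ has finite relative Tor-amplitude, so after pullback to the discrete base it is perfect by Proposition \ref{perfect_finite_tor}; covering the quasi-compact $X$ by finitely many affine charts, on each chart $\iota^\ast Q$ is quasi-isomorphic to a bounded complex of finite free modules. The pointwise dimensions $y \mapsto \dim_{\kappa(y)} H^i(Q \otimes^{\mathbb{L}} \kappa(y))$ are then upper semicontinuous, being computed from ranks of differentials which drop only on closed sets, and boundedness of the complex makes $\{\, y : H^i(Q \otimes^{\mathbb{L}}\kappa(y)) = 0 \text{ for all } i<a \text{ or } i>b \,\}$ a \emph{finite} intersection of open sets. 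Patching these finitely many affine computations identifies this locus with $U$, so $U$ is open.

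The main obstacle is this last paragraph: establishing semicontinuity without Noetherian hypotheses and assembling the local-on-$X$ computations into a single open $U \subseteq |\spec \pi_0(A)|$. The boundedness supplied by perfectness (Proposition \ref{perfect_finite_tor}) is what rescues the finiteness of the intersection, the local freeness of perfect complexes is what makes the rank arguments available, and the compatibility of the fibrewise criterion with base change — so that $W$ is genuinely an open \emph{subfunctor} rather than a mere subset — is the delicate point that rests on Proposition \ref{relative_tor_prop}(2).
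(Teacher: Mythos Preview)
Your overall strategy matches the paper's: both verify the substack condition via Proposition~\ref{locally_determined}, both observe that the inclusion is a monomorphism because it comes from a full sub-coCartesian fibration, both reduce the openness check to an affine base, and both descend to the discrete truncation via Proposition~\ref{relative_tor_prop}(2) and the flatness of $X$ over $k$. At that final point the paper simply invokes \cite[Lemma~2.1.4]{lieblich} (originally Grothendieck), noting that in the discrete case $\sF$ may be replaced by a bounded complex of $A$-flat $\sO_X$-modules.

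There is, however, a genuine error in your last paragraph. You write that ``$Q$ has finite relative Tor-amplitude, so after pullback to the discrete base it is perfect by Proposition~\ref{perfect_finite_tor}.'' But Proposition~\ref{perfect_finite_tor} characterises perfectness in terms of \emph{absolute} Tor-amplitude on $X[A]$, not Tor-amplitude relative to $\pi$. When $X$ is singular these differ: a skyscraper at a singular point of $X$ is pseudo-coherent of finite relative Tor-amplitude over the base, yet is not perfect on $X[A]$. Consequently your claim that $\iota^\ast Q$ is locally a bounded complex of finite free modules fails, and with it the rank-semicontinuity argument as stated.

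The repair is exactly the content of the Lieblich/Grothendieck lemma the paper cites: in the discrete situation one replaces $Q$ not by a strictly perfect complex but by a bounded complex of $A$-\emph{flat} finitely presented $\sO_X$-modules, which exists precisely because of the finite relative Tor-amplitude. The fibrewise cohomology dimensions are then still upper semicontinuous (flatness over $A$ makes base change exact on these terms), and your open-locus argument goes through. So your sketch is recoverable, but as written the invocation of Proposition~\ref{perfect_finite_tor} is a misapplication, and the whole point of the paper---handling singular $X$ where $\bderive{X}\not\subset\Perf(X)$---is where it breaks.
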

\begin{proof}
 It is easy to see from the proof of Lemma \ref{local_prop} that $\sMab$ satisfies the conditions of \ref{locally_determined}, and therefore is a substack of $\sMtor$. 

We need to show that $\sMab \rightarrow \sMtor$ is an immersion and finitely presented.  For the immersion, by definition we must show that $\sMab \rightarrow \sMab \times_{\sMtor} \sMab$ is an equivalence.  This is obvious as the morphism $\sMab \rightarrow \sMtor$ is a weak equivalence on all components with non-empty preimage.   To complete the proof, we will show that $\Aalg \times_{\sMtor} \sMab$ is a Zariski open subset of $\Aalg$ for any $\Aalg \stackrel{\sF}{\rightarrow} \sMtor$ with $A \in \kAlg$ (and thus show it is finitely presented).  

Since $\sMab$ the moduli of a full subcategory, the morphism $\sMab \rightarrow \sMtor$ is a left fibration.  

This is easily checked by analyzing the lifting conditions for left fibrations of simplicial sets.  As such, we have explicit models for the fiber product with any $\Aalg  \xrightarrow{\sF} \sMtor$: the fiber product is the fiber product as simplicial sets over $\kAlg$.  Further, if $\s{Z} := \sMab \times_{\sMtor} \Aalg$, then $\s{Z}$ fits into the diagram with both squares Cartesian 
\begin{displaymath}
  \xymatrix{ \s{Z} \ar[r] \ar[d]& \Aalg \ar[d] \\ \sMab \ar[r] \ar[d] & \sMtor \ar[d] \\ h(\sMab) \ar[r] & h(\sMtor)}
\end{displaymath}
where $h(\sMab)$ is the left fibration with $F_{h(\sMab)} = \pi_0(F_{\sMab})$ (well defined up to equivalence).  These two statements combined show that we can assume  $\s{Z} \subset \Aalg$ is the largest sub-simplicial set on the objects $\{f \in (\Aalg)_0 |\; f^\ast\sF \textrm{ has Tor-amplitude in [a,b]}\}$ and is closed under equivalence.  

Since $X[A]$ is flat over $A$, this implies $t_0(X) \cong \spec \pi_0(A) \otimes_{\spec A} X$.  Using the correspondence between Zariski open subschemes of $\spec A$ and $\spec \pi_0(A)$ combined with Proposition \ref{relative_tor_prop}(2) we can assume $X$ and $A$ are discrete.  However, with this description of $\s{Z}$, the result then easily follows from \cite[Lemma 2.1.4]{lieblich} (originally due to Grothendieck) since in this case, one can replace $\sF$ with a complex of $A$-flat $\sO_{X}$-modules.

\end{proof}

\begin{proposition}
$\sMab$ is a geometric $b-a + 1$ stack of almost finite type.
\end{proposition}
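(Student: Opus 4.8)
The plan is to verify directly the two conditions in the Toën--Vezzosi definition of an $n$-geometric stack: that the diagonal $\sMab \to \sMab \times_k \sMab$ is $(b-a)$-representable, and that $\sMab$ admits a smooth $(b-a+1)$-atlas, with the almost-finite-type property tracked throughout. Since $\sMab$ is the moduli of pseudo-coherent objects with relative Tor-amplitude in $[a,b]$, and is open in $\sMtor$ (Proposition \ref{ab_is_zariski_open}), I would prove geometricity for $\sMab$ on its own and let the open immersions feed into the covering of $\sMtor$ separately.

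The first step, computing the diagonal, is the clean part and rests entirely on the corepresentability machinery. Base-changing along a pair $(M,N)\colon \spec A \to \sMab \times_k \sMab$, the fiber is the stack of equivalences $M_B \simeq N_B$ for $A\to B$, which is an open subspace (cut out by invertibility on $\pi_0$) of the mapping stack $\underline{\Map}(M,N)\colon B \mapsto \Map_{X[B]}(M_B,N_B)$. Using the pullback--pushforward adjunction and flat base change over $X[A]\to\spec A$, this functor is $B \mapsto \Omega^\infty f_\ast\,\sdHom(M, N\otimes f^\ast B)$, which by Proposition \ref{corepresentable} is corepresentable: $\underline{\Map}(M,N) \cong \Map^L(Q,-)$ for the pseudo-coherent object $Q$ of that proposition. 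As both $M$ and $N$ have relative Tor-amplitude $[a,b]$, Lemma \ref{itpsuedo} bounds $Q$ cohomologically above by $b-a$, so Proposition \ref{geometric_stack_rep} shows $\Map^L(Q,-)$, and hence its open substack of equivalences, is a geometric $(b-a)$-stack almost of finite type. Thus the diagonal is $(b-a)$-representable and almost of finite type.

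For geometricity itself I would induct on the amplitude $b-a$. The base case $b=a$ is the moduli of objects flat in a single relative degree, i.e. (a shift of) the classical moduli of flat sheaves on the fibers of $X\to\spec k$, which is a $1$-geometric stack almost of finite type, matching $b-a+1=1$. For the inductive step I would use relative truncation to exhibit a Tor-amplitude-$[a,b]$ object as an extension of a Tor-amplitude-$[a,b-1]$ object by a piece flat in the top degree $b$; in families this produces a morphism $\sMab \to \sMac$ whose fibers are the extension spaces governed by exactly the mapping stacks $\Map^L(Q,-)$ analyzed above. Since $\sMac$ is $(b-a)$-geometric by induction and the extension data raises geometricity by one level, this yields the desired $(b-a+1)$. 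The atlas of $\sMab$ is then assembled from an atlas of $\sMac$ together with the charts for the extension fibers supplied by Proposition \ref{geometric_stack_rep}, and projectivity enters to guarantee surjectivity: twisting by a high power of the relative ample bundle makes pushforwards connective and lets every object be presented by the bounded resolutions of Proposition \ref{aligned}, so the atlas meets every point. Almost-finite-type is inherited at each stage from Proposition \ref{geometric_stack_rep}.

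The main obstacle is the inductive step: establishing, in families over an arbitrary simplicial base $A$, that relative truncation functorially splits a Tor-amplitude-$[a,b]$ object into a $[a,b-1]$ part and a flat top part, and that the resulting morphism to $\sMac$ is a genuine smooth fibration with the claimed extension-space fibers. This is precisely where the derived subtleties concentrate, since relative truncation is far less well behaved over non-discrete $A$ than over a field; it is the reason the relative Tor-amplitude formalism of Proposition \ref{relative_tor_prop} and the corepresentability results were developed. By contrast, the diagonal computation and the almost-finite-type bookkeeping are essentially formal consequences of Propositions \ref{corepresentable} and \ref{geometric_stack_rep} together with Lemma \ref{itpsuedo}.
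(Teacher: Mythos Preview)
Your treatment of the diagonal is correct and matches the paper: the fiber over $(P,Q)$ is the equivalence stack $\mI_{P,Q}$, which is Zariski open in the mapping stack $\mV_{P,Q}$, and $\mV_{P,Q}$ is identified via Proposition~\ref{homotopy_coherent} with $\Map^L(Q,-)$ for the corepresenting object of Proposition~\ref{corepresentable}, whose cohomological bound from Lemma~\ref{itpsuedo} feeds into Proposition~\ref{geometric_stack_rep}.

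The inductive step, however, has a genuine gap, and it is exactly the one you flag. There is no morphism $\sMab \to \sMac$: a relative truncation that functorially extracts a flat ``top degree'' quotient from a Tor-amplitude-$[a,b]$ object does not exist, even over discrete bases. Such a quotient depends on a choice of presentation (this is already visible for perfect complexes over a field), so there is no canonical target in $\sMac$ to which $\sF$ could be sent. Consequently there is no fibration whose fibers are the extension spaces you describe.

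The paper resolves this by reversing the direction of the construction. Instead of mapping the larger moduli to the smaller one, it builds the moduli of one-step resolutions as an auxiliary stack sitting \emph{over} the larger moduli. Concretely, it forms $\Morph_{ab}$ classifying morphisms $\sF \to \sV$ with $\sF \in \sMab$ and $\sV \in \Vect[-b]$; the projection $\Morph_{ab} \to \sMab \times \sM_{\Vect[-b]}$ has fibers given by the same mapping stacks $\Map^L(Q,-)$ you computed for the diagonal, so $\Morph_{ab}$ is $(b-a+1)$-geometric by the inductive hypothesis. The fiber functor then gives $\fib\colon \Morph_{ab} \to \sMabone$. This map is not smooth in general (precisely because pushforward is not t-exact), so the paper restricts to a Zariski open $\sL_{ab}$ where $R\mr{Hom}(\sV[-1],\sG)$ is connective; on that locus $\fib$ becomes smooth because the corepresenting object is perfect with Tor-amplitude in $[0,b-a+1]$. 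Surjectivity is obtained not from a single fibration but by ranging over the sections $\iota_{n,m}$ coming from $\oplus^m L^{\otimes n}[-b]$: projectivity guarantees that for suitable $n$ these sections land in $\sL_{ab}$ and that the resulting one-step resolution exists, so $\coprod_{n,m}\iota_{n,m}^\ast \sA_{ab} \to \sMabone$ is a smooth epimorphism. Your intuition that the atlas is assembled from extension data over the smaller moduli and that the ample bundle enters for surjectivity is right; what needs to change is that the extension stack is the source of the atlas map, not the fiber of a nonexistent truncation map.
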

By almost finite type, we mean that $\sMab$ restricted to the full subcategory of $n$-truncated $k$-algebras is finite type for any $n \in \mathbb{N}$.  We begin by explicitly verifying the diagonal of $\sMab$ is $b-a$ representable.  We then proceed to construct an atlas via induction.   The proof is contained in the next two subsections.

\subsection{Representability of diagonal}

In this section we show the natural morphism $\sMab \rightarrow \sMab \times \sMab$ is $(b -a)$-representable.  Let $\mI_{P, Q}$ be a left fibration over $\Aalg$ fitting into the fiber square 
\begin{displaymath}
  \xymatrix{\mI_{P, Q} \ar[r]\ar[d] & \Aalg \ar[d]^{(P, Q)} \\ \sMab \ar[r] & \sMab \times \sMab}
\end{displaymath}
The $(b-a)$-representability of $\sMab$ amounts to showing that $\mI_{P,Q}$ is a $(b-a)$-geometric stack for all choices of $(P, Q)$.  Showing the geometricity of $\mI_{P, Q}$ will proceed much like the calculation that $\mr{GL}(n, k)$ is a variety: one first proves that $\mr{M}(n,k)$ (the set $n\times n$ matrices with coefficients in $k$) is an affine scheme and proceed to write $\mr{GL}(n, k)$ as the complement of a Zariski closed subscheme. 
For us, $\mr{M}(n,k)$ will take the form of $\mV_{P,Q}$ (defined below).  We will show the geometricity of this object and show that $\mI_{P, Q}$ is a Zariski open substack.  

Recall that the $\widetilde{\QC(X)}$, being Cartesian (and coCartesian) over $\kAlg$, is a fibrant-cofibrant element of the contravariant (covariant, respectively) simplicial model structure on the category $(\mr{Set}^+_\Delta)_{/\kAlg}$.  There are two different function complexes (cotensor) associated to a simplicial set $K$ corresponding to the two model structures on $\sset$: the standard and the Joyal.  We will refer to these as $\QCF^{K,\flat}$ and $\QCF^{K, \sharp}$, respectively.  Explicitly, $\QCF^{K, \flat}$ classifies the functor
\begin{displaymath}
  S \rightarrow \mr{Hom}_{\sset^+}(K^\flat \times S, \QCF) 
\end{displaymath}
while $\QCF^{\Delta_1, \sharp}$ classifies the functor
\begin{displaymath}
  S \rightarrow \mr{Hom}_{\sset^+}(K^\sharp \times S, \QCF).
\end{displaymath}
where $K^\flat$ and $K^\sharp$ are the the constant marked simplicial set over $\kalg$ with only degenerate edges marked (all edges marked, respectively).  Thus $\QCF^{\Delta_1, \sharp} \cong \QCF$ and $\QCF^{\Delta_1, \flat}$ is the $\infty$-category of morphisms (relative to $\kAlg$).  In both cases, a monomorphism $K \subset K^\prime$ induces fibrations $\QCF^{K^\prime, \flat} \rightarrow \QCF^{K, \flat}$ and $\QCF^{K^\prime, \sharp} \rightarrow \QCF^{K, \sharp}$. Further, there exists a natural inclusion $\QCF^{K, \sharp} \subset \QCF^{K, \flat}$ arising from the morphism $K^\flat \subset K^\sharp$.  We will many times abbreviate the mapping object $\QCF^{K, \flat}$ by $\QCF^{K}$.

Applying this to the natural cofibration $S^0 \rightarrow \Delta_1$.  We can factor
\begin{displaymath}
\QCF \rightarrow \QCF \times \QCF
\end{displaymath}
as
\begin{displaymath}
\QCF \rightarrow \QCF^{\Delta_1, \sharp} \rightarrow \QCF \times \QCF
\end{displaymath}
where the first morphism is an equivalence and the second is a fibration. Since the classifying functor is a right Quillen functor, it preserves fibrations.  Explicit calculation shows $\sM_{\widetilde{\QC(X)}^{K, \sharp}} \cong \sM_{\widetilde{\QC(X)}}^K$.  Thus the homotopy fiber product
\begin{displaymath}
 \sM_{\widetilde{\QC(X)})^{\Delta_1}, \sharp} \times^h_{\sM_{\QC(X)} \times \sM_{\QC(X)} } \Aalg
\end{displaymath}
is just a fiber product of simplicial sets over $\kAlg$.  Clearly this is equivalent to $\mI_{P, Q}$.  

Define $\mV_{P,Q}$ as the left fibration fitting into the fiber diagram 
\begin{align}
\label{morph_cat}
  \xymatrix{ \mV_{P, Q} \ar[r] \ar[d] & \Aalg \ar[d]^{(P, Q)} \\ \sM_{\widetilde{\QC(X)}^{\Delta_1, \flat}} \ar[r]^{s\times t\qquad} &  \sM_{\widetilde{\QC(X)}} \times \sM_{\widetilde{\QC(X)}}}.
\end{align}
  With this description, the functor corresponding to $\mV_{P, Q}$ (over $\Aalg$) is given by   
\begin{displaymath}
  \xymatrix{(\spec B \ar[r]^f & \spec A)\ar[r] &  \Map_{\QC(X[B])}(f^\ast P, f^\ast Q)}
\end{displaymath}
on the object level (we are not defining this functor, it already exists, we are just describing the homotopy type evaluated on objects).  The inclusion $\QCF^{\Delta_1, \sharp} \subset \QCF^{\Delta_1, \flat}$ then yields a natural inclusion $\mI_{P, Q} \rightarrow \mV_{P,Q}$.

\begin{proposition}
\label{homotopy_coherent}
  $F_{\mV_{P, Q}}$ is equivalent to the functor
  \begin{align*}
    \Aalg & \rightarrow  \s{S} \\
    B & \rightarrow  \Map_{X[A]}^L(P, Q\otimes_A B)
  \end{align*}
\end{proposition}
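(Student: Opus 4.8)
The plan is to reduce the statement to the fiberwise adjunction supplied by the biCartesian structure of $\QCF$, and then to upgrade the resulting pointwise equivalence of mapping spaces to a coherent equivalence of left fibrations over $\Aalg$. Recall from the construction that $\mV_{P,Q} \to \Aalg$ is the left fibration whose fiber over a point $f\colon A \to B$ is the relative mapping space $\Map_{\QC(X[B])}(f^\ast P, f^\ast Q)$, so it suffices to produce a natural equivalence between $B \mapsto \Map_{\QC(X[B])}(f^\ast P, f^\ast Q)$ and $B \mapsto \Map^L_{X[A]}(P, Q \otimes_A B)$. The essential input is that, since $\QCF$ is both Cartesian and coCartesian over $\kAlg$ with coCartesian transport given by pullback $f^\ast = -\otimes_A B$ and Cartesian transport given by pushforward $f_\ast$, each $f\colon A \to B$ induces a fiberwise adjunction $f^\ast \dashv f_\ast$ between $\QC(X[A])$ and $\QC(X[B])$.

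First I would establish the object-level equivalence. Applying the adjunction $f^\ast \dashv f_\ast$ with the target object $f^\ast Q$ gives
\begin{displaymath}
  \Map_{\QC(X[B])}(f^\ast P, f^\ast Q) \;\cong\; \Map_{\QC(X[A])}(P, f_\ast f^\ast Q).
\end{displaymath}
To identify the right-hand side, write $\tilde f\colon X[B] \to X[A]$ for the morphism induced by $A \to B$; since $X$ is flat over $k$, the square $X[B] \cong X[A]\times_{\spec A}\spec B$ is a derived Cartesian square with $X[A] \to \spec A$ flat, so base change gives $\tilde f_\ast \sO_{X[B]} \cong \sO_{X[A]}\otimes_A B$. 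The projection formula then yields $f_\ast f^\ast Q \cong Q \otimes_{\sO_{X[A]}} \tilde f_\ast \sO_{X[B]} \cong Q\otimes_A B$, and hence $\Map_{\QC(X[A])}(P, f_\ast f^\ast Q) \cong \Map^L_{X[A]}(P, Q\otimes_A B)$, using the identification $\Map \cong \Map^L$ from the discussion of mapping spaces preceding Proposition \ref{corepresentable}.

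To upgrade this to a statement about the straightened functor $F_{\mV_{P,Q}}$ rather than a mere bijection of homotopy types on objects, I would work entirely at the level of left fibrations over $\Aalg$. Let $G\colon \Aalg \to \QC(X[A])$ denote the functor $B \mapsto f_\ast f^\ast Q$, assembled from the Cartesian and coCartesian transport of $\QCF$, and let $G^\ast(\QC(X[A])_{P/}) \to \Aalg$ be the pullback of the undercategory left fibration, which classifies $B \mapsto \Map^L_{X[A]}(P, f_\ast f^\ast Q)$. Both the coCartesian edges $P \to f^\ast P$ and the Cartesian edges $f_\ast f^\ast Q \to f^\ast Q$ are part of the biCartesian structure of $\QCF$; composing with them identifies, naturally in $f$, the relative mapping space $\Map^f_{\QCF}(P, f^\ast Q)$ of arrows in $\QCF$ lying over $f$ with both $\Map_{\QC(X[B])}(f^\ast P, f^\ast Q)$, the fiber of $\mV_{P,Q}$, and $\Map_{\QC(X[A])}(P, f_\ast f^\ast Q)$, the fiber of $G^\ast(\QC(X[A])_{P/})$. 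This exhibits a map of left fibrations $\mV_{P,Q} \to G^\ast(\QC(X[A])_{P/})$ over $\Aalg$ which is a fiberwise equivalence, hence an equivalence. Finally, the projection-formula identification $f_\ast f^\ast Q \cong Q\otimes_A B$ is natural in $f$ and so refines $G$ to the functor $B \mapsto Q\otimes_A B$, giving the claimed description of $F_{\mV_{P,Q}}$.

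The main obstacle is precisely this last coherence step: extracting from the biCartesian structure an honest equivalence of left fibrations, equivalently a natural transformation of straightened functors, rather than the unstructured pointwise equivalence produced by the adjunction bijection. This is where the marked-simplicial-set models of the preceding subsection do the work, since the fiberwise adjunction $f^\ast \dashv f_\ast$ and the projection formula must be realized as morphisms of the function complexes $\QCF^{\Delta_1}$ compatible with the source and target maps $s$ and $t$; once the comparison is exhibited as a map of fibrations it is automatically coherent, and checking that it is a fiberwise equivalence is exactly the object-level computation above.
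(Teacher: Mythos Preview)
Your proposal is correct and follows essentially the same strategy as the paper: both pass through the relative mapping space $\Map^f_{\QCF}(P, f^\ast Q)$ in the total category, use the coCartesian structure to identify it with the fiber of $\mV_{P,Q}$, and use the Cartesian structure to identify it with $\Map_{X[A]}(P, Q\otimes_A B)$. The paper packages the last step differently: rather than invoking the adjunction $f^\ast \dashv f_\ast$ and the projection formula separately, it constructs a single Cartesian edge $E$ in the functor category $\QCF^{\Aalg}$ lying over the natural transformation from the constant functor at $A$ to the inclusion $\Aalg \to \kAlg$, and observes directly that $E|_0$ is the functor $B \mapsto Q\otimes_A B$ in $\QC(X[A])$. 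This is exactly the coherent assembly you flag as the main obstacle; your invocation of flatness of $X$ over $k$ for base change is unnecessary here, since $X[B] \to X[A]$ is affine and the projection formula for affine morphisms holds unconditionally.
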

\begin{proof}
Since $\widetilde{\pscoh}^{[a,b]}(X)$ is a full sub-coCartesian fibration of $\widetilde{\QC(X)}$, we can assume we are working with the latter biCartesian fibration over $\kAlg$.  Let $\widehat G$ denote the left fibration defined by the fiber diagram 
\begin{displaymath}
  \xymatrix{ \widehat{G} \ar[r] \ar[d] & \sM_{\widetilde{\QC(X)}_{P/}} \ar[d]\\ \Aalg \ar[r]^{Q} & \sM_{\widetilde{\QC(X)}}}
\end{displaymath}
The fiber of $\widehat{G}$ over $B$ is $\Map^L_{\widetilde{\QC(X)}}(P, Q_A \otimes B)$.  There exists a natural morphism from $\mV_{P, Q}$ to $\widehat G$.  Mainly, the definition of the morphism $P: \Aalg \rightarrow \sM_{\QCF}$ as left fibrations over $\kAlg$ ensures the image of any morphism from the initial object ($A \xrightarrow{id} A$) is a coCartesian morphism $P \rightarrow P\otimes_A B$  in $\widetilde{\QC(X)}$.  The data of the our module $P$ is then a homotopy coherent choice of these coCartesian morphisms.  Since the fiber of $\mV_{P, Q}$ over $B$ is $\Map_{X[B]}(P\otimes_k B, Q\otimes_kB)$, using the data supplied by $P$, we can precompose to obtain a morphism $\mV_{P, Q} \to \widehat G$.  More precisely, one finds a lift in all possible compositions, giving a morphism that only depends on a contractible space of choices.  Since we are precomposing with coCartesian morphisms, the morphism $\mV_{P, Q} \to \widehat G$ will be a categorical equivalence.

The more tedious part of this proposition is the Cartesian adjunction.  In particular, to get the equivalence stated in the proposition we need to assemble adjunctions in a homotopy coherent fashion. Let $\Delta_1 \rightarrow \kAlg^{\Aalg}$ be the given as follows. It is clear that the limit over the natural morphism $\Aalg \rightarrow \kAlg$ is equivalent to $A$.  Thus, we have a limit diagram $\Aalg^{\vartriangleleft} \rightarrow \kAlg$ and a morphism $\Aalg \times \Delta_1 \rightarrow \kAlg$ that factors through this limit diagram. Evaluated on the two inclusions of $\Delta_0$, they are the constant functor $A$ and the natural morphism $\Aalg$ to $\kAlg$, respectively.  Note that the former will cease to be a left fibration over $\kAlg$.

By \cite[Proposition 3.1.2.1]{topoi} the functor category $\widetilde{\QC(X)}^{\Aalg}$ is Cartesian over $\kAlg^{\Aalg}$.  In particular, one has a lift diagram  
\begin{displaymath}
  \xymatrix{ \Delta_0 = \Lambda_1^1 \ar[d] \ar^Q[r] &\widetilde{\QC(X)}^{\Aalg} \ar[d]\\
\Delta_1 \ar[r] \ar^E@{-->}[ru] & \kAlg^{\Aalg} }
\end{displaymath}
where $E$ is a Cartesian edge.  By the same proposition we know that for any $B \in \Aalg$, $E(B) \in (\widetilde{\QC(X)})_1$ is Cartesian (over the natural map $\QCF \rightarrow \kAlg$).  Applying this to our particular morphism $\Delta_1 \rightarrow \kAlg^{\Aalg}$ the Cartesian edge requirement ensures that $E_{1}$ is a homotopy coherent composition of adjunctions: in the homotopy category the straightened version of this functor is $B \rightarrow Q \otimes B \in \QC(X[A])$.

Let $\widehat{F}$ fit into the fiber diagram
\begin{displaymath}
  \xymatrix{\widehat{F} \ar[r] \ar[d] & \sM_{\widetilde{\QC(X)}_{P/}} \ar[d] \\ \Aalg \ar[r]^{\sM(E|_{1})} & \sM_{\widetilde{\QC(X)}}}
\end{displaymath}
It is clear that $\widehat{F}$ corresponds to the functor 
\begin{displaymath}
B \rightarrow \Map^L_{X[A]}(P, Q \otimes B)\qquad (\textrm{in Fun}(\Aalg, \s{S})).
\end{displaymath}
Further, we have a lift of the diagram
\begin{displaymath}
  \xymatrix{ \widehat{G} \times \Lambda_1^0  \ar[r] \ar[d] &  \sM_{\widetilde{\QC(X)}_{P/}} \ar[d] \\ \widehat{G} \times \Delta_1 \ar[r] \ar^H@{-->}[ru] & \sM_{\widetilde{\QC(X)}}}
\end{displaymath}
since the right vertical arrow is a left fibration and the left vertical arrow is left anodyne.  Here the bottom arrow corresponds to the natural map $\widehat{G} \times \Delta_1 \rightarrow \Aalg \times \Delta_1  \to \Aalg \xrightarrow{Q} \sM_{\QCF}$.  It is important to note that the morphsim $\Aalg \times \Delta_1 \to \Aalg$ is not the projection, but instead obtained above.  It is easy to see that $H$ is a morphism from $\widehat{G} \rightarrow \widehat{F}$. The fact that $\widetilde{\QC(X)}$ is Cartesian ensures that this is a categorical equivalence.  Combined with the aforementioned categorical equivalence $\mV_{P, Q} \rightarrow \widehat G$, we get a categorical equivalence from $\mV_{P, Q} \rightarrow \widehat{F}$.  This is best seen as a natural equivalence functors: 
\begin{displaymath}
  (B \rightarrow \mr{Hom}_{X[B]}(P\otimes B, Q\otimes B)) \cong (B \rightarrow \mr{Hom}_{X[A]}(P, Q \otimes B)).
\end{displaymath}
\end{proof}

\begin{corollary}
  $\mV_{P, Q}$ is a $(b-a)$-geometric stack.
\end{corollary}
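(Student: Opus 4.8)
The plan is to leverage the identification just established in Proposition~\ref{homotopy_coherent}: the functor $F_{\mV_{P, Q}}$ is equivalent to $B \mapsto \Map^L_{X[A]}(P, Q \otimes_A B)$ on $\Aalg$. My goal is to exhibit this functor as corepresented by a single pseudo-coherent, cohomologically bounded-above object on $\spec A$, at which point Proposition~\ref{geometric_stack_rep} delivers geometricity together with the correct bound on the stack level. In other words, I want to produce $Q' \in \pscoh(A)$ with $F_{\mV_{P,Q}} \cong \Map^L(Q', -)$ and then read off the conclusion.

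First I would let $f \colon X[A] \to \spec A$ be the base change of the structure morphism along $\spec A \to \spec k$. Since $X \to \spec k$ is projective and satisfies the hypotheses of Lemma~\ref{strong_dual}, these properties are inherited by $f$: $X[A]$ is perfect and carries an ample sequence, and $\spec A$ is affine hence quasi-compact, so the hypotheses of Proposition~\ref{corepresentable} and of the section on the corepresenting object hold with $M = P$ and $N = Q$, both lying in $\bderivefl{X[A]}$. For an $A$-algebra $B$ I would rewrite $Q \otimes_A B \cong Q \otimes_{\sO_{X[A]}} f^\ast B$, viewing $B$ through the forgetful functor $\Aalg \to \Amod$, and then combine the global-sections adjunction with the enriched mapping object and Proposition~\ref{corepresentable}: writing $Q'$ for the corepresenting object of $f_\ast \sdHom(P, Q \otimes f^\ast -)$, one obtains
\begin{align*}
\Map^L_{X[A]}(P, Q \otimes_A B)
&\cong \Map_{\spec A}\bigl(\sO_A,\, f_\ast \sdHom(P, Q \otimes_{\sO_{X[A]}} f^\ast B)\bigr) \\
&\cong \Map_{\spec A}\bigl(\sO_A,\, \sdHom(Q', B)\bigr) \\
&\cong \Map^L_{\spec A}(Q', B),
\end{align*}
the last step by the $\otimes$--$\sdHom$ adjunction. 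Hence $F_{\mV_{P, Q}} \cong \Map^L(Q', -)$.

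It then remains to control $Q'$. Because $P$ and $Q$ both have relative Tor-amplitude in $[a,b]$ over $f$ (they lie in $\bderiveflab{X[A]}{a}{b}$), Lemma~\ref{itpsuedo} applies with $a' = a$, showing that $Q'$ is pseudo-coherent on $\spec A$ and cohomologically bounded above by $b - a$. Proposition~\ref{geometric_stack_rep} then identifies $\Map^L(Q', -)$ as a geometric $(b-a)$-stack of almost finite type, and the equivalence of the previous paragraph transports this structure to $\mV_{P, Q}$, giving the claim.

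The hard part will be the homotopy-coherent bookkeeping rather than any single estimate: I must upgrade the pointwise equivalences above to an equivalence of functors on $\Aalg$, i.e.\ verify that the corepresentability isomorphism of Proposition~\ref{corepresentable} is natural in $B$ and splices coherently with the identification of Proposition~\ref{homotopy_coherent}. The delicate point is that $B$ enters as an $A$-algebra (the moduli variable) but is fed into the corepresentable functor as its underlying $A$-module; one must check that the forgetful functor $\Aalg \to \Amod$ mediates this compatibly and that the free-symmetric-algebra adjunction underlying Proposition~\ref{geometric_stack_rep} respects it. A secondary, more routine obstacle is confirming that base change along $\spec A \to \spec k$ genuinely preserves all the hypotheses of Proposition~\ref{corepresentable} (perfectness, the ample sequence, and the conditions of Lemma~\ref{strong_dual}), which I would reduce to the discrete projective case exactly as in the surrounding discussion.
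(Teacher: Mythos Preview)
Your proposal is correct and follows the same route as the paper: the paper's proof is the one-line ``This follows immediately from Proposition~\ref{homotopy_coherent} and Proposition~\ref{geometric_stack_rep},'' and what you have written is precisely the unpacking of that sentence, inserting the intermediate appeals to Proposition~\ref{corepresentable} and Lemma~\ref{itpsuedo} that are needed to produce the corepresenting object and bound its cohomology before invoking Proposition~\ref{geometric_stack_rep}. The coherence and base-change worries you flag are real but are treated at the same level of rigor in the paper (i.e., left implicit), so your argument is, if anything, more careful than the original.
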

\begin{proof}
  This follows immediately from Proposition \ref{homotopy_coherent} and Proposition \ref{geometric_stack_rep}.
\end{proof}

\begin{proposition}
  $\mI_{P, Q} \subset \mV_{P, Q}$ is a Zariski open substack.
\end{proposition}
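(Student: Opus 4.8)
The plan is to mimic the proof of Proposition \ref{ab_is_zariski_open}: it suffices to check that $\mI_{P, Q} \to \mV_{P, Q}$ is a monomorphism and that for every map $\spec B \to \mV_{P, Q}$ over $\spec A$ the fibre product $\spec B \times_{\mV_{P, Q}} \mI_{P, Q}$ is a Zariski open subscheme of $\spec B$. By Proposition \ref{homotopy_coherent} such a map is the datum of a morphism $\phi \colon P_B \to Q_B$ in $\QC(X[B])$, where $P_B = P \otimes_A B$ and $Q_B = Q \otimes_A B$. A further base change $h \colon \spec C \to \spec B$ factors through $\mI_{P, Q}$ exactly when $h^\ast \phi$ is an equivalence, so the entire question is controlled by the cofibre $W := \cofib(\phi)$. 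Since $\mI_{P, Q}$ is the sub-left-fibration on those morphisms that are equivalences, and equivalences form a union of connected components of each mapping space $\Map_{X[B]}(P_B, Q_B)$, the inclusion $\mI_{P, Q} \to \mV_{P, Q}$ is fibrewise $(-1)$-truncated and hence a monomorphism; this reduces the immersion claim entirely to the openness statement.

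For the openness, note that $P_B, Q_B \in \bderiveflab{X[B]}{a}{b}$, so by Proposition \ref{relative_tor_prop}(4) together with the fact that pseudo-coherence is a stable subcategory we have $W \in \bderivefl{X[B]}$. Because pullback preserves cofibre sequences, $h^\ast \phi$ is an equivalence if and only if $h^\ast W \cong 0$; thus $\spec B \times_{\mV_{P, Q}} \mI_{P, Q}$ represents the functor of those $h$ for which $W$ pulls back to zero. To produce the desired open set, let $Z := \mathrm{Supp}(W) \subseteq X[B]$. As $W$ is pseudo-coherent with bounded cohomology (finite relative Tor-amplitude), $Z$ is closed, and since the structure morphism $X[B] \to \spec B$ is projective, hence proper, its image is a closed subset; I claim the open complement $U := \spec B \setminus f(Z)$ represents the fibre product.

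One verifies the claim on points. If $h$ factors through $U$, then $X[C] \to X[B]$ lands in $X[U] \subseteq X[B] \setminus Z$, forcing $h^\ast W \cong 0$. Conversely, for a point $c \in \spec C$ with image $b = h(c)$, the vanishing $h^\ast W \cong 0$ gives $W|_{X[\kappa(c)]} \cong 0$, and since $W$ has finite relative Tor-amplitude we have $W|_{X[\kappa(c)]} \cong W|_{X[\kappa(b)]} \otimes_{\kappa(b)} \kappa(c)$, whence $W|_{X[\kappa(b)]} \cong 0$ by faithful flatness; derived Nakayama then identifies $\{\, b : W|_{X[\kappa(b)]} \cong 0 \,\}$ with the complement of $f(Z)$, so $b \in U$. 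As in Proposition \ref{ab_is_zariski_open}, the support and Nakayama arguments are reduced to the discrete case using the flatness of $X$ over $k$, Proposition \ref{relative_tor_prop}(2), and the correspondence between Zariski opens of $\spec B$ and of $\spec \pi_0(B)$. I expect the main obstacle to be precisely this last step: checking that the formation of $\mathrm{Supp}(W)$ commutes with base change to fibres — equivalently, that derived Nakayama genuinely detects fibrewise vanishing — which is exactly what finite relative Tor-amplitude buys and is the reason the hypothesis cannot be dropped.
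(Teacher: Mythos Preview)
Your argument is correct and follows the same core strategy as the paper: both define the closed set as the support of the (co)fibre of $\phi$, push it forward using projectivity, and identify the open complement with the fibre product. The only substantive difference is in how the comparison is justified. The paper spends most of its proof verifying, via a marked-simplicial-set lifting argument, that $\sM_{\QCF^{\Delta_1,\sharp}} \to \sM_{\QCF^{\Delta_1,\flat}}$ is a fibration and a monomorphism in the covariant model structure; this yields an explicit simplicial-set model for $\widehat F$ as a full sub-left-fibration of $\Balg$, after which the identification with $\sU$ is argued by universality of the support. You instead work directly at the level of functors of points: the monomorphism claim is disposed of by observing that equivalences form a union of components of each mapping space, and the identification with $U$ is checked pointwise via derived Nakayama and faithful flatness of field extensions. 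Your route is shorter and avoids the model-categorical bookkeeping; the paper's route makes the explicit model available, which matters elsewhere in the section. The use of $\cofib$ versus $\fib$ is immaterial.
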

\begin{proof}
Let $\widehat F$ fit into the fiber diagram
  \begin{displaymath}
    \xymatrix{\widehat F \ar[r] \ar[d]& \mI_{P, Q} \ar[d] \\ B\textrm{-Alg} \ar[r]^{\phi} & \mV_{P, Q}}
  \end{displaymath}
where $B \in \Aalg$.  Again, we view $\phi$ as a functor and as an object in the fiber of $\mV_{P, Q}$ above $B$.  In particular, $\phi: P\otimes_A B \to Q\otimes_A B$.  We know that $\fib(\phi) \in \pscoh_{\pi}^{[a,b + 1]}$.  Thus, $\pi_0(B)\otimes_{\sO_{X[B]}} \fib(\phi)$ is a bounded above complex of vector bundles with bounded cohomology.  Set $Z := \mr{supp}(\fib(\phi))$; $Z$ is a Zariski closed subset of $X[\pi_0(B)]$ (and thus $X[B]$ as well).  Since $X[B] \rightarrow \spec B$ is projective, $\pi(Z)$ is a Zariski closed subset of $\spec B$. Let $\s{U}$ be the open complement of $\pi(Z)$.  This is a Zariski open subscheme of $\spec B$. We claim $\widehat F \cong \sU$.

For any $g: B \to C$ with $\spec C \to \spec B$ factoring through $\sU$, the morphism $f^\ast \phi \in \mI_{P, Q}$ since restricted to $X\times \sU$, $\fib(\phi|_{\s{U}}) \cong 0$.  Technically, the definition of $\sU$ shows the statement is true for $\phi|_{t_0(\sU)}$, but the morphism $t_0(\s{U}) \rightarrow \spec B$ factors $t_0(\sU) \rightarrow \sU \rightarrow \spec B$.  An element $\sF \in \pscoh_\pi(X \times \sU)$ is such that $(id \times t_0)^\ast \sF \cong 0$ if and only if $\sF \cong 0$ (this is an easy extension of the definition and the similar easy statement regarding perfect complexes).  Thus the statement for $\phi|_{\s{U}}$.  This yields a natural morphism $\sU \rightarrow \widehat F$.  To show this morphism is an equivalence we will need explicit an explicit model of $\widehat F$.  

We assume that the morphism  
\begin{displaymath}
  \sM_{\QCF^{\Delta_1, \sharp}} \rightarrow \sM_{\QCF^{\Delta_1, \flat}}.
\end{displaymath}
is a monomorphism and a fibration in the covariant model structure on $(\sset)_{/\kAlg}$.  This will be explicitly verified at the end of this proof.  This assumption implies $\mI_{P, Q} \rightarrow \mV_{P, Q}$ is a full sub-left fibration and allows for an explicit model for $\widehat{F}$: it is simply the fiber product as a simplicial set over $\Aalg$.   In particular, $\widehat F \rightarrow \Balg$ is a full sub-left fibration of $\Balg$ containing $\sU$. 
Since $f^\ast \phi \in \mI_{P, Q}$ (over the vertex $C$) for $f: \spec C \rightarrow \spec B$ if and only if $\fib(f^\ast \phi)$ vanishes (this follows from standard Tor spectral sequences), the equivalence $\sU \cong \widehat F$ is then an easy consequence of the fact that $\sU$ is universal for vanishing of $\fib(\phi)$ on subschemes of the form $X\times Y$ with $Y \subset \spec B$.   

We now show that \begin{displaymath}
  \sM_{\QCF^{\Delta_1, \sharp}} \rightarrow \sM_{\QCF^{\Delta_1, \flat}}.
\end{displaymath}
is a fibration.  It suffices to show that there exists a lift for any trivial cofibration $f: S \rightarrow K$.  As mentioned above, the moduli functor $\sM_{-}$ has as left quillen adjoint $\sharp: K \in (\sset)_{/\kAlg} \rightarrow K^\sharp$.  A lift in the diagram 
\begin{displaymath}
  \xymatrix{S \ar[r] \ar[d] & \sM_{\QCF^{\Delta_1, \sharp}} \ar[d] \\ K \ar[r] & \sM_{\QCF^{\Delta_1, \flat}} }
\end{displaymath}
corresponds to a lift in the diagram
\begin{displaymath}
  \xymatrix{S^\sharp \ar[r] \ar[d] & \QCF^{\Delta_1, \sharp} \ar[d] \\ K^\sharp \ar[r] & \QCF^{\Delta_1, \flat} }
\end{displaymath}
The top morphism is equivalent to a morphism $S^\sharp \times \Delta_1^\sharp \rightarrow \QCF$ and the bottom morphism is equivalent to a morphism $K^\sharp \times \Delta_1^\flat \rightarrow \QCF$.  Thus the above diagram is then equivalent to a morphism
\begin{displaymath}
S^\sharp \times \Delta_1^\sharp \coprod_{S^\sharp \times \Delta_1^\flat} K^\sharp \times \Delta_1^\flat \rightarrow \QCF
\end{displaymath}
and a lift in the above diagram is a lift diagram
\begin{displaymath}
  \xymatrix{S^\sharp \times \Delta_1^\sharp \coprod_{S^\sharp \times \Delta_1^\flat} K^\sharp \times \Delta_1^\flat \ar[d] \ar[r] & \QCF \ar[d] \\ K^\sharp \times \Delta_1^\sharp \ar@{-->}[ru]\ar[r] & \kAlg }
\end{displaymath}
Since $\sharp$ is Quillen, $f^\sharp$ is a weak cofibration in $(\sset^+)_{/\kAlg}$ and thus marked anodyne.  The morphism $\Delta_1^\flat \rightarrow \Delta_1^\sharp$ is a cofibration, so by \cite[Proposition 3.1.2.3]{topoi}
\begin{displaymath}
  S^\sharp \times \Delta_1^\sharp \coprod_{S^\sharp \times \Delta_1^\flat} K^\sharp \times \Delta_1^\flat \rightarrow K^\sharp \times \Delta_1^\sharp
\end{displaymath}
is marked anodyne and the lift exists.  Thus $\sM_{\QCF^{\Delta_1^\sharp}} \rightarrow \sM_{\QCF{\Delta_1^\flat}}$ is a fibration.  The statement about being a monomorphism follows easily from the fact that $\QCF^{\Delta_1, \sharp} \subset \QCF^{\Delta_1, \flat}$.  



\end{proof}

\subsection{Atlas for $\sMab$}
The following has been proved in multiple places, for instance \cite{KC} in characteristic $0$ or \cite[Example 4.15]{JP} in general.
\begin{proposition}
$\sM_{\bderive{X}}^{[0]}(X)$ is a geometric 1-stack locally of almost finite type.
\end{proposition}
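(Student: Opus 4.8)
The plan is to identify $\sM^{[0]}_{\bderive{X}}$ with the classical moduli stack of coherent sheaves on $X$ that are flat over $\spec k$, and then to supply (or invoke) the standard Quot-scheme presentation of that stack. First I would unwind the definition: relative Tor-amplitude $[0,0]$ is, by Proposition \ref{relative_tor_prop}(1), exactly flatness over $\spec k$, and taking $K = \sO_{\spec k}$ in the definition of relative Tor-amplitude shows moreover that such an $\sF$ already lies in the heart $\QC^{[0,0]}(X[A])$. Combined with pseudo-coherence, the fiber of $\widetilde{\bderive{X}}$ over $A$ with Tor-amplitude $[0,0]$ is precisely the groupoid of $A$-flat coherent sheaves on $X[A]$. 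Since these objects lie in an abelian heart, $\pi_i\Map(\sF,\sG) \cong \mr{Ext}^{-i}(\sF,\sG) = 0$ for $i>0$, so the mapping spaces are $0$-truncated and the only stacky data are the automorphism group schemes; this is the $b-a = 0$ instance of the general count and is what forces $\sM^{[0]}_{\bderive{X}}$ to be a $1$-stack rather than a higher stack.

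For geometricity I would first record representability of the diagonal. For $A$-points $P,Q$, the functor $B \mapsto \Map^L_{X[A]}(P, Q\otimes_A B)$ is corepresentable by Proposition \ref{corepresentable}, and by Lemma \ref{itpsuedo} its corepresenting object is pseudo-coherent and cohomologically bounded above by $b - a' = 0$. Hence, by Proposition \ref{geometric_stack_rep} with $n=0$, the object $\mV_{P,Q}$ is an affine (linear) scheme over $\Aalg$ — the relative total space of a module — and $\mI_{P,Q}\subset \mV_{P,Q}$ is the Zariski open locus where the universal map is an isomorphism. This is exactly the $b-a = 0$ case of the diagonal computation carried out above and shows the diagonal is $0$-representable, i.e.\ representable by schemes.

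To produce a smooth atlas I would exploit that $X$ is projective with an ample sequence $\{\sL^{\otimes n}\}$. Covering $\sM^{[0]}_{\bderive{X}}$ by the open-and-closed substacks of fixed numerical type, Serre vanishing furnishes, for $n \gg 0$, a functorial surjection $\sO_X(-n)^{\oplus N} \twoheadrightarrow \sF$ for every flat family $\sF$ of that type, with $N$ fixed by the invariants. The scheme parametrizing such quotients is a Quot scheme, which is projective and of finite type after restriction to each truncation $n$-$\Aalg$, and the forgetful morphism $\mr{Quot} \to \sM^{[0]}_{\bderive{X}}$ is smooth and surjective: its fibers record the choice of surjection, an $H^0$-torsor whose deformations are unobstructed because the relevant $\mr{Ext}^{>0}$ vanish on the heart. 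Assembling these Quot schemes over the numerical strata yields the atlas, and restricting to $n$-truncated $k$-algebras makes each piece of finite type, giving almost finite type.

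The hard part will be the boundedness and smoothness bookkeeping for this atlas: one must verify that flatness over $\spec k$ together with the projectivity hypothesis yields uniform Serre vanishing over the possibly non-Noetherian, simplicial base, so that the Quot presentation is genuinely smooth and of almost finite type rather than merely of finite type on the discrete locus. In characteristic $0$ this verification is \cite{KC}, and in general it is \cite[Example 4.15]{JP}; either reference may be invoked directly once the identification with the stack of $k$-flat coherent sheaves from the first paragraph is in place.
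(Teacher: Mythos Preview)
The paper's proof of this proposition is nothing more than the two citations \cite{KC} and \cite[Example 4.15]{JP} that you invoke in your final paragraph, so your sketch already supplies strictly more than the paper does and your approach aligns with it.

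One correction to your first paragraph: relative Tor-amplitude $[0,0]$ over $\pi: X[A] \to \spec A$ does \emph{not} force $\sF$ into the heart $\QC^{[0,0]}(X[A])$ when $A$ is non-discrete.  The test modules $K$ range over $\QC^0(\spec A)$, i.e.\ discrete $\pi_0(A)$-modules, so taking $K$ to be the structure sheaf only constrains $\sF \otimes_A \pi_0(A)$, not $\sF$ itself; indeed $\sO_{X[A]}$ is flat and pseudo-coherent but not discrete when $A$ is not.  The fiber of $\sM^{[0]}$ over such $A$ is the groupoid of $A$-flat pseudo-coherent objects, which is the correct \emph{derived} enhancement of the classical stack of flat coherent sheaves rather than its heart.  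This is precisely what the cited references treat, so your diagonal computation, your Quot-scheme atlas sketch, and your conclusion all remain valid once the identification is stated this way.
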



  

Throughout the rest of this section, assume that $\sMab$ is a $(b-a + 1)$-geometric $D^-$-stack of almost finite type.  We will show the case of $[a, b+1]$ by induction.  The idea is to provide an atlas by uniformly writing elements of Tor-amplitude $[a, b+1]$ as an extensions of elements with Tor-amplitude of $[a, b]$.  This is similar to what was done in \cite{dg_moduli}, although the projectivity will somewhat complicate matters.

Define $\Morph_{ab}$ to be a choice of left fibration (over $\kAlg$) that makes a fiber square
\begin{displaymath}
\xymatrix{ \Morph_{ab} \ar[r] \ar[d] & \sM_{\QCF^{\Delta_1}} \ar[d]^{s
    \times t} \\  \sMab \times \sM_{\widetilde{\Vect[-b]}} \ar[r] & \sM_{\QCF} \times \sM_{\QCF}}
\end{displaymath}
Where the right vertical morphism is induced by applying the moduli
functor to 
 \begin{align}
\label{fiber_diagram}
\xymatrix{ \QCF^{\Delta_1} \ar[d]^{s \times t} \ar[r]^{\qquad \mr{fib}} & \QCF \\ \QCF \times \QCF. &  }
\end{align} 

The composition of morphisms
\begin{displaymath}
\xymatrix{ \Morph_{ab}  \ar[r] &\sM_{\QCF^{\Delta_1}}  \ar[r]^{\fib}& \QCF}
\end{displaymath}
 gives a morphism of left fibrations
 \begin{displaymath}
   \Morph_{ab} \rightarrow \sMtor.
 \end{displaymath}
($\Morph_{ab}$ is a left fibration since it is the pullback of one).
By Lemma \ref{relative_tor_prop}, this morphism factors through the natural morphism $\sMabone \rightarrow \sMtor$.  Our assumptions ensure that we can apply Proposition \ref{aligned}; this in turn shows it is an epimorphism on $\pi_0(\sMabone)$. 

\begin{proposition}
\label{geometric_fiber}
$\Morph_{ab}$ has is a $(b-a+1)$-geometric $D^-$-stack locally of almost finite type.
\end{proposition}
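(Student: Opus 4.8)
The plan is to present $\Morph_{ab}$ as an affine (hence representable) morphism over a base that is already $(b-a+1)$-geometric, so that geometricity of the total space follows from the stability of the geometricity level under representable morphisms. Unwinding the defining fiber square, the structure morphism $\Morph_{ab} \to \sMab \times \sM_{\widetilde{\Vect[-b]}}$ is the base change of the source–target map $s \times t \colon \sM_{\QCF^{\Delta_1}} \to \sM_{\QCF} \times \sM_{\QCF}$, so its fiber over a pair $(P, V)$ is the relative mapping stack $B \mapsto \Map_{X[B]}(P \otimes_A B, V \otimes_A B)$. I would first record that the base has the correct level: $\sMab$ is $(b-a+1)$-geometric by the standing inductive hypothesis, while $\sM_{\widetilde{\Vect[-b]}}$, being a shift of the moduli of locally free sheaves on the fibers of $X \to \spec k$ (cf.\ \cite{dg_moduli, Pranav}), is $1$-geometric and locally of finite type. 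Since $b \ge a$, the product $\sMab \times \sM_{\widetilde{\Vect[-b]}}$ is $\max(b-a+1,\,1) = (b-a+1)$-geometric and locally of almost finite type.

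It then remains to control the structure morphism, and because geometricity of a morphism may be checked after base change to affines on the target, it suffices to work over an arbitrary $\spec A \to \sMab \times \sM_{\widetilde{\Vect[-b]}}$ classifying a pair $(P, V)$ over $X[A]$ with $P \in \pscoh_\pi^{[a,b]}(X[A])$ and $V$ a shifted vector bundle of relative Tor-amplitude $[b,b]$. Applying the homotopy-coherent adjunction argument of Proposition \ref{homotopy_coherent} in this relative setting identifies the fiber product $\Morph_{ab} \times_{\sMab \times \sM_{\widetilde{\Vect[-b]}}} \spec A$ with the functor $B \mapsto \Map^L_{X[A]}(P, V \otimes_A B)$ on $\Aalg$. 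Since $X \to \spec k$, and hence $X[A] \to \spec A$, satisfies the hypotheses of Lemma \ref{strong_dual} and carries an ample sequence, Proposition \ref{corepresentable} (together with the extension to quasi-compact quasi-separated schemes recorded in the remark following it) produces a corepresenting object $Q \in \QC(A)$ with $f_\ast \sdHom(P, V \otimes f^\ast -) \cong \sdHom_A(Q, -)$, so that this functor is precisely $\Map^L(Q, -)$.

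The decisive numerical input is the amplitude bound: applying Lemma \ref{itpsuedo} with source of Tor-amplitude $[a,b]$ and target of Tor-amplitude $[b,b]$ shows that $Q$ is pseudo-coherent and cohomologically bounded above by $b - b = 0$, i.e.\ $Q \in \QC^{(-\infty, 0]}(A)$. By Proposition \ref{geometric_stack_rep} the functor $\Map^L(Q, -)$ is then represented by the free symmetric $A$-algebra on $Q$, an affine derived scheme locally of almost finite type. Thus every base change of $\Morph_{ab} \to \sMab \times \sM_{\widetilde{\Vect[-b]}}$ to an affine is an affine derived scheme, so this morphism is affine and locally of almost finite type; since representable morphisms do not raise the geometricity level, $\Morph_{ab}$ is $(b-a+1)$-geometric and locally of almost finite type.

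I expect the main obstacle to be the relative mapping-stack identification: matching $B \mapsto \Map_{X[B]}(P_B, V_B)$ with $\Map^L(Q,-)$ requires assembling the Cartesian adjunctions of Proposition \ref{homotopy_coherent} in a homotopy-coherent, base-change-compatible way over the non-affine base $\sMab \times \sM_{\widetilde{\Vect[-b]}}$. The reduction to affines on the target is what makes this tractable, since it lets one invoke the already-established corepresentability pointwise; the amplitude bookkeeping that places $Q$ in $\QC^{(-\infty,0]}(A)$ — and so collapses the fibers to affine schemes rather than to higher geometric stacks — is then the remaining essential ingredient that keeps the level at $b-a+1$.
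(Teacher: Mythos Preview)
Your argument is correct and follows the same route as the paper: identify the fiber of $s\times t$ over an affine via Proposition~\ref{homotopy_coherent}, corepresent it by a pseudo-coherent $Q$ via Proposition~\ref{corepresentable}, bound $Q$ above using Lemma~\ref{itpsuedo}, and invoke Proposition~\ref{geometric_stack_rep} together with the stability of geometricity under representable morphisms (the paper cites \cite[Proposition~1.3.3.4]{HAGII} for this last step).

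The one noteworthy difference is your amplitude bookkeeping. The paper records only that the fiber is an affine stack of geometricity $(b-a)$, i.e.\ that the morphism is $(b-a)$-representable. You observe, correctly, that since the target $V$ has relative Tor-amplitude $[b,b]$, Lemma~\ref{itpsuedo} gives the sharper bound $b-b=0$, so $Q\in\QC^{(-\infty,0]}(A)$ and the fiber is in fact an affine derived scheme. This is a genuine tightening: it shows $s\times t$ is affine rather than merely $(b-a)$-representable. Both bounds suffice for the stated conclusion, since the geometricity level of $\Morph_{ab}$ is governed by the base $\sMab\times\sM_{\widetilde{\Vect[-b]}}$, which sits at level $b-a+1$ by the inductive hypothesis; your sharper fiber bound simply makes the argument cleaner and avoids any appeal to To\"en's higher affine stacks.
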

\begin{proof}
We will show that the natural morphism $s\times t: \Morph_{ab}
\rightarrow \sMab \times \sM_{\widetilde{\Vect[-b]}}$ is representable.  Given a morphism $(\sF,
\sV): \spec A \rightarrow \sMab \times \sM_{\widetilde{\Vect[-b]}}$, let $\widetilde{H}$ fit into the diagram
\begin{displaymath}
  \xymatrix{\widetilde{H} \ar[r]\ar[d] &\Morph_{ab} \ar[r] \ar[d]&
    \sM_{\QCF^{\Delta_1}} \ar[d] \\ \spec A \ar[r]& \sMab \times \sM_{\widetilde{\Vect[-b]}} \ar[r] &  \sM_{\QCF} \times \sM_{\QCF}}
\end{displaymath}
Since both inside squares are Cartesian, the outside square is Cartesian as well.  Proposition \ref{homotopy_coherent} can be applied to the outside square, showing that  $F_{\widetilde{H}}$ (the straightened form of $\widetilde{H}$ over $\Aalg$) is equivalent to the functor $F: B \rightarrow \Map_{X[A]}^L(\sF, \sV\otimes B)$.  

By Proposition \ref{corepresentable} there exists a $Q$ such that  $F \cong \Map_{A}^L(Q, -)$.  Further, Lemma \ref{itpsuedo} and Proposition \ref{geometric_stack_rep} shows that this functor is equivalent to an affine stack \cite{champs_affine} with geometricity $(b-a)$ and almost finite type. Thus, $\widetilde{H}$ is $(b-a)$-geometric $A$-stack.  
Therefore the morphism $\Morph_{ab} \rightarrow \sMab \times \sM_{\Vect[-b]}$ is $(b-a)$-representable. By \
cite[Proposition 1.3.3.4]{HAGII}, $\Morph_{ab}$ is $(b-a +1)$-geometric locally of almost finite type.
\end{proof}

Ideally, $\fib: \Morph_{ab} \rightarrow \sMabone$ would be smooth, thus allowing the construction of a smooth atlas.  This will not generally be true since $\pi_{\ast}: \QC(X[A]) \rightarrow \QC(A)$ is not exact (in the t-structure).  To fix this problem, we note that $\Morph_{ab} \rightarrow \sMabone$ factors
\begin{displaymath}
  \xymatrix{\Morph_{ab} \ar[r]^{t \times \fib\qquad \qquad} & \sM_{\widetilde{\Vect[-b]}} \times \sMabone \ar[r]& \sMabone}
\end{displaymath}
We restrict this morphism to a Zariski open substack of $\sM_{\widetilde{\Vect[-b]}} \times \sMabone$ for which $t \times \fib$ has the added property of smoothness.  

Let $\s{L}_{ab} \subset \sM_{\widetilde{\Vect[-b]}} \times \sMabone$ be the full sub-left fibration (it is a left fibration by right exactness of pullback) whose fiber over $\Aalg$ consists of pairs $(\s{V}, \s{G}) \in \Vect[-b](X[A]) \times \pscoh_{\pi}^{[a,b+1]}(X[A])$ with $R\mr{Hom}_{X[A]}(\s{V}[-1], \s{G}) \in \QC^{(-\infty, 0]}(A)$. 

\begin{proposition}
\label{Lzar}
$\s{L}_{ab}$ is a $D^-$-stack.  Further, the natural morphism $\sL_{ab} \rightarrow \sM_{\Vect[-b]} \times \sMabone$ is a Zariski open immersion.
\end{proposition}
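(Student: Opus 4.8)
The plan is to follow the template of Proposition~\ref{ab_is_zariski_open} almost verbatim: first use the substack criterion of Proposition~\ref{locally_determined} to see that $\sL_{ab}$ is a $D^-$-stack, and then exhibit the openness by pulling back along an arbitrary test object and reducing to Grothendieck's semicontinuity over a discrete base. The guiding observation is that the defining condition $R\mr{Hom}_{X[A]}(\sV[-1],\sG)\in\QC^{(-\infty,0]}(A)$ cuts out a \emph{full} sub-left fibration closed under equivalences, so the hypotheses of Proposition~\ref{locally_determined} are in force and $\sL_{ab}\to\sM_{\Vect[-b]}\times\sMabone$ is automatically a monomorphism.

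For the stack claim, the base $\sM_{\Vect[-b]}\times\sMabone$ is a $D^-$-stack, being a product of stacks, so condition~(1) of Proposition~\ref{locally_determined} holds. It remains to check condition~(2): given a coCartesian arrow over an $fppf$ morphism $A\to B$ taking a pair $(\sV,\sG)$ to its pullback $(\sV\otimes_A B,\sG\otimes_A B)$, if the pullback lies in $\sL_{ab}$ then so does $(\sV,\sG)$. This is the same faithfully flat descent carried out in Lemma~\ref{local_prop}: both $R\mr{Hom}$ and the pushforward commute with flat base change, so $H^i\!\big(R\mr{Hom}_{X[B]}(\sV[-1]\otimes B,\sG\otimes B)\big)\cong H^i\!\big(R\mr{Hom}_{X[A]}(\sV[-1],\sG)\big)\otimes_A B$, and faithful flatness of $A\to B$ detects the vanishing of each positive cohomology group. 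Hence coconnectivity descends, and $\sL_{ab}$ is a $D^-$-stack.

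For the Zariski open immersion, fix a test morphism $(\sV,\sG)\colon\spec A\to\sM_{\Vect[-b]}\times\sMabone$ and set $\s{Z}:=\sL_{ab}\times_{\sM_{\Vect[-b]}\times\sMabone}\Aalg$. Exactly as in Proposition~\ref{ab_is_zariski_open}, since $\sL_{ab}$ is a full sub-left fibration the fibre product is computed as simplicial sets over $\kAlg$, so $\s{Z}\subset\Aalg$ is the largest sub-simplicial set, closed under equivalence, supported on those $g\colon A\to B$ for which $R\mr{Hom}_{X[B]}(g^\ast\sV[-1],g^\ast\sG)$ is coconnective. To recognize this locus as a Zariski open subscheme I would invoke corepresentability: by Proposition~\ref{corepresentable} the functor $B\mapsto f_\ast\sdHom_{X[A]}(\sV[-1],\sG\otimes f^\ast(-))$ is corepresented by a $Q\in\QC(A)$ which, by Lemma~\ref{itpsuedo}, is pseudo-coherent and cohomologically bounded above. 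Evaluating at the monoidal unit identifies the defining condition with the vanishing of the positive cohomology of $R\mr{Hom}_A(Q,A)$ and, fibrewise, of its base changes. Using flatness of $X$ over $k$, which gives $t_0(X)\cong\spec\pi_0(A)\times_{\spec A}X$, together with Proposition~\ref{relative_tor_prop}(2), I would reduce to discrete $X$ and $A$; in that Noetherian setting $Q$ is represented by a bounded-above complex of finite modules and the locus where its fibrewise positive cohomology vanishes is open by Grothendieck's semicontinuity theorem, in the form used in \cite[Lemma~2.1.4]{lieblich}. This identifies $\s{Z}$ with a Zariski open subscheme of $\spec A$ and shows the immersion is open and finitely presented.

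The main obstacle is this last step: translating the purely homological condition that $R\mr{Hom}(\sV[-1],\sG)$ have no cohomology in positive degrees into an honest open locus. The difficulty is twofold. First, one must produce a genuine finite-type representative for the relevant cohomology so that semicontinuity applies, which is precisely why the corepresenting object $Q$ of Proposition~\ref{corepresentable} and its boundedness from Lemma~\ref{itpsuedo} are essential. Second, one must be sure the derived-to-discrete reduction is legitimate for a coconnectivity condition and not merely for Tor-amplitude; here flatness of the structure morphism and the identification of $t_0$ do the work, but the reduction has to be checked to commute with $f_\ast$ and with passage to fibres, so that the open locus detected over $\pi_0(A)$ is indeed the correct one over $A$.
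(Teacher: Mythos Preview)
Your first paragraph, establishing that $\sL_{ab}$ is a $D^-$-stack via Proposition~\ref{locally_determined}, matches the paper's argument and is fine.

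The second half diverges from the paper's approach and, as written, has a gap. You route the openness through the corepresenting object $Q$ of Proposition~\ref{corepresentable}, invoking Lemma~\ref{itpsuedo} to conclude only that $Q$ is pseudo-coherent, and then try to reduce to the discrete case via Proposition~\ref{relative_tor_prop}(2) so as to cite Lieblich's semicontinuity lemma. Two problems. First, Lieblich's \cite[Lemma~2.1.4]{lieblich} (and any standard semicontinuity statement) requires a \emph{perfect} complex, not merely a pseudo-coherent one; a bounded-above pseudo-coherent $Q$ need not have well-behaved $R\mr{Hom}_A(Q,B)\cong Q^\vee\otimes_A B$, so your ``evaluating at the monoidal unit and its base changes'' step is unjustified at the level of generality you claim. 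Second, Proposition~\ref{relative_tor_prop}(2) reduces \emph{Tor-amplitude} conditions to the discrete truncation; the condition here is a cohomological vanishing of an $R\mr{Hom}$, and there is no reason the same reduction template applies.

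The paper avoids both issues by exploiting the one feature of this situation you do not use: $\sV$ is a shifted vector bundle, hence dualizable on $X[A]$. This gives $R\mr{Hom}_{X[B]}(f^{\prime\ast}\sV,f^{\prime\ast}\sG)\cong f^{\prime\ast}(\sV^\vee\otimes\sG)$, and proper base change turns the defining condition into $\tau^{\geq 1}\big(f^\ast\pi_\ast(\sV^\vee\otimes\sG)\big)\cong 0$. Since $\sV^\vee\otimes\sG\in\bderiveflpi{X[A]}$, Lemma~\ref{strong_dual} makes $\pi_\ast(\sV^\vee\otimes\sG)$ a \emph{perfect} complex on $\spec A$, and Proposition~\ref{semicontinuous} (semicontinuity for perfect complexes over simplicial rings) immediately shows the vanishing locus is Zariski open, with no discrete reduction needed. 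If you want to salvage your route, observe that in this specific case $Q\cong(\pi_\ast(\sV^\vee\otimes\sG))^\vee$ is in fact perfect (cf.\ the proof of Proposition~\ref{smooth}); once you know that, semicontinuity applies directly and the detour through Lieblich and the discrete reduction is unnecessary.
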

\begin{proof}
To see that $\s{L}_{ab}$ is a $D^-$ stack, one just needs to verify the conditions of Proposition \ref{locally_determined}.   These follow directly from the definition of our t-structure and the right-exactness of the tensor product.

For the second statement, we will show $\s{Z} := \sL_{ab} \times_{\sM_{\Vect[-b]} \times \sMabone} \Aalg$ is a Zariski open subscheme of $\Aalg$.  Using the notation in the following commutative diagram
\begin{displaymath}
  \xymatrix{X[B] \ar[r]^{f^\prime} \ar[d]^{\pi^\prime} & X[A]\ar[d]^\pi \\ \spec B \ar[r]^f & \spec A}
\end{displaymath}
the definition of $\sL_{ab}$ and using arguements similar to that of Proposition \ref{ab_is_zariski_open}, we can assume $\s{Z} \subset \Aalg$ and   
\begin{align*}
(\s{Z})_0 = \{f \in (\Aalg)_0 |\; \tau^{\geq 1}(R\mr{Hom}_{X[B]}(f^{\prime \ast} \sV, f^{\prime \ast} \s{G})) \cong 0 \} \\
\cong \{f \in (\Aalg)_0 |\; \tau^{\geq 1}(\pi^\prime_{\ast}f^{\prime \ast}(\sV^\vee \otimes \sG)) \cong 0 \} \\ 
\cong \{f \in (\Aalg)_0 |\; \tau^{\geq 1}(f^\ast \pi_\ast (\sV^\vee \otimes \sG)) \cong 0 \}
\end{align*}
Our assumptions on $X$ ensure  $\pi_{\ast}(\sV^\vee \otimes \sG)$ is a perfect complex in $\QC(A)$.  By Proposition \ref{semicontinuous} (in the next subsection) this substack is a Zariski open subscheme in $\Aalg$.


\end{proof}

\subsubsection{Semi-continuity}

As used in the previous proposition, we need a version of semi-continuity for perfect objects over $\spec A$ where $A$ is a simplicial commutative ring.  This is well known and a simple extension of analogous statements in \cite{SGA6}.  

Recall from \cite{SGA6} that given a locally ringed topos $(X, \s{O}_X)$ and a perfect complex in $M \in \QC(X)$ associated with any integer $i$ we can define $rk^M_i(x)$ to be the function $x \rightarrow \rank(H^i(M_x \otimes_{\sO_X,x} k(x)))$, for any point $x$ of $(X, \sO_X)$. 

Associated to any $A \in \kAlg$, we have the locally ringed $\infty$-topos $(\spec A, A)$.  The definition of $rk^M_i$ carries over unaltered to this case since the morphism $A_x \rightarrow k(x)$ factors through $A_x \rightarrow \pi_0(A_x)$.  See \cite[\S 4.2]{DAGV} for the precise definition of $(\spec A, A)$.


\begin{lemma}
\label{semicontinuous}
Let $M \in \Perf(A)$.  Then $rk^M_i(x)$ is upper semi-continuous in the locally ringed $\infty$-topos $(\spec A, A)$.
\end{lemma}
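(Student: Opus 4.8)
The plan is to reduce the statement to the classical upper semi-continuity theorem of \cite{SGA6} by exploiting the observation, already recorded above, that the residue map $A_x \to k(x)$ factors through $\pi_0(A_x)$. First I would note that $rk^M_i(x)$ depends only on the fiber $M_x \otimes_{A_x} k(x)$, and that since $k(x)$ is discrete this derived tensor product factors as $(M_x \otimes_{A_x} \pi_0(A_x)) \otimes_{\pi_0(A_x)} k(x)$, which is the fiber at $x$ of the perfect complex $M \otimes_A \pi_0(A)$ over the discrete ring $\pi_0(A)$. Because $t_0(\spec A) \cong \spec \pi_0(A)$, the underlying topological space and the specialization order on the points of $(\spec A, A)$ coincide with those of the discrete locally ringed topos $(\spec \pi_0(A), \pi_0(A))$ (using the description of $(\spec A, A)$ in \cite[\S 4.2]{DAGV}). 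Hence it suffices to prove the assertion for the base-changed perfect complex $M \otimes_A \pi_0(A)$, i.e. exactly in the setting of \cite{SGA6}.

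Having reduced to the discrete case, I would argue locally near a chosen point $x$. There the perfect complex is quasi-isomorphic to a bounded complex of finite free $\pi_0(A)$-modules $(P^\bullet, d^\bullet)$, so the fiber at $x$ is the complex of finite-dimensional $k(x)$-vector spaces $P^\bullet \otimes k(x)$ whose differentials are the matrices $d^i(x)$ obtained by evaluating the entries of $d^i$ at $x$. A direct count then gives
\begin{displaymath}
  rk^M_i(x) = \dim_{k(x)} H^i(P^\bullet \otimes k(x)) = \rank P^i - \rank d^i(x) - \rank d^{i-1}(x),
\end{displaymath}
since $\dim_{k(x)} \ker d^i(x) = \rank P^i - \rank d^i(x)$ and $\dim_{k(x)} \operatorname{im} d^{i-1}(x) = \rank d^{i-1}(x)$.

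The conclusion now follows from the elementary fact that the rank of a matrix is lower semi-continuous: the locus where $\rank d^i(x) \geq r$ is the open set cut out by non-vanishing of some $r \times r$ minor. Consequently both $-\rank d^i(x)$ and $-\rank d^{i-1}(x)$ are upper semi-continuous, $\rank P^i$ is locally constant, and the displayed identity exhibits $rk^M_i$ as upper semi-continuous, i.e. $\{x : rk^M_i(x) \geq n\}$ is closed for every $n$. The only step I expect to present genuine difficulty is the first one: verifying rigorously that both the fiber computation and the topology of the $\infty$-topos $(\spec A, A)$ are controlled entirely by $\pi_0(A)$, together with the local finite-free model for perfect complexes in the connective setting. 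Once that reduction is in place, the remaining argument is identical to the classical one, which is why the lemma is a routine extension of \cite{SGA6}.
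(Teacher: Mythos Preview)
Your proposal is correct and follows essentially the same route as the paper: reduce to the discrete case by observing that both the fiber $M_x\otimes_{A_x}k(x)$ and the Zariski topology of $(\spec A,A)$ are controlled by $\pi_0(A)$, then invoke the classical result. The only difference is that the paper simply cites \cite[Lemma 5.5]{SGA6} for the discrete case, whereas you spell out the matrix-rank argument explicitly.
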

\begin{proof}
Since the rank of a projective object is invariant under passage to $\pi_0(A)$, one easily sees that the statement is ``topological'' (as used in \cite{lurie_thesis}) and thus we can replace $(\spec A, A)$ by $(\spec A, \pi_0(A))$.  Further, the equivalence between the category of etale $A$-algebras and etale $\pi_0(A)$-algebras restricts to an equivalence on Zariski open sets over $A$ and $\pi_0(A)$.  Thus, the problem is reduced to the classical case and follows from \cite[Lemma 5.5]{SGA6}.
\end{proof}
\subsubsection{The substack $\s{A}_{ab} \subset \Morph_{ab}$}
\label{funny_thing}

Finally, we define $\s{A}_{ab}$ to be the left fibration fitting into the fiber diagram
\begin{displaymath}
  \xymatrix{ \s{B}_{ab} \ar[r] \ar[d] &\Morph_{ab}\ar[d]^{t \times \fib} \\ \s{L}_{ab} \ar[r] &  \sM_{\Vect[-b]} \times \sMabone}
\end{displaymath}
The above proposition shows the natural morphism $\sL_{ab} \rightarrow \sM_{\Vect[-b]} \times \sMabone$ is a Zariski open immersion.  Thus, the $(b-a + 1)$-geometricity of $\Morph_{ab}$ carries over to $\s{A}_{ab}$ as well and we have shown

\begin{corollary}
  $\s{A}_{ab}$ is a $(b-a +1)$-geometric $D^-$-stack locally of almost finite type.
\end{corollary}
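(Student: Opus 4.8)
The plan is to recognize $\s{A}_{ab}$ as a Zariski open substack of $\Morph_{ab}$ and then transport the geometricity and finiteness already established for $\Morph_{ab}$. By its defining Cartesian square, $\s{A}_{ab}$ is the fiber product $\Morph_{ab} \times_{\sM_{\Vect[-b]} \times \sMabone} \sL_{ab}$, so the first step is simply to read off this presentation and to keep track of which morphism is being pulled back.

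First I would invoke Proposition \ref{Lzar}, which asserts that $\sL_{ab} \rightarrow \sM_{\Vect[-b]} \times \sMabone$ is a Zariski open immersion. Since open immersions are stable under base change, the projection $\s{A}_{ab} \rightarrow \Morph_{ab}$, obtained by pulling this morphism back along $t \times \fib$, is again a Zariski open immersion. In particular $\s{A}_{ab}$ is a Zariski open substack of $\Morph_{ab}$.

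It then remains to observe that the two properties descend to open substacks. In the theory of geometric $D^-$-stacks, open immersions are monomorphisms and hence representable, so an open substack of an $n$-geometric stack is again $n$-geometric; taking $n = b-a+1$ and using the geometricity of $\Morph_{ab}$ from Proposition \ref{geometric_fiber} yields that $\s{A}_{ab}$ is $(b-a+1)$-geometric. Being locally of almost finite type is a property local on a smooth atlas, and restricting an atlas of $\Morph_{ab}$ along the open immersion produces an atlas of $\s{A}_{ab}$ of the same type, so the almost finite type condition is inherited as well. Both stability statements are standard and may be read off from the formal properties of geometric morphisms recorded in \cite{HAGII}.

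The substance of the argument is entirely front-loaded into Propositions \ref{Lzar} and \ref{geometric_fiber}; the present step is purely formal. The only point requiring a modicum of care is the bookkeeping that the Cartesian square genuinely exhibits $\s{A}_{ab} \rightarrow \Morph_{ab}$ as the base change of the open immersion $\sL_{ab} \rightarrow \sM_{\Vect[-b]} \times \sMabone$, rather than of a morphism into a single factor, together with the routine verification that geometric level and almost finite type are preserved under open immersions in the $D^-$-stack setting. I do not anticipate any genuine obstacle beyond confirming these stability properties.
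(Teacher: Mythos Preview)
Your proposal is correct and matches the paper's own argument essentially line for line: the paper simply notes that Proposition~\ref{Lzar} makes $\sL_{ab} \rightarrow \sM_{\Vect[-b]} \times \sMabone$ a Zariski open immersion, and then states that the $(b-a+1)$-geometricity of $\Morph_{ab}$ from Proposition~\ref{geometric_fiber} ``carries over'' to $\s{A}_{ab}$. Your write-up is in fact more careful than the paper's in spelling out why geometric level and almost finite type descend along open immersions.
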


Before showing that $\s{A}_{ab} \to \sL_{ab}$ is smooth, we need to refine the above diagram and give a more explicit description of $\s{A}_{ab}$.  Let $\s{E} \subset \widetilde \QC(X)^{\Delta_2 \times \Delta 1, \flat}$ be the full subcategory consisting of distinguished triangles, i.e., diagrams of the form
\begin{displaymath}
  \xymatrix{\sV \ar[r] \ar[d] & \sG \ar[r] \ar[d] & 0^\prime \ar[d] \\ 0 \ar[r] & \s{W} \ar[r] & \sV^\prime} 
\end{displaymath}
satisfying
\begin{enumerate}
\item all objects are in $\QC(X[A])$ for some $A \in \kAlg$,
\item $0, 0^\prime$ representatives of the zero object,
\item $\sV \cong \sV^\prime[-1]$
\item both squares are Cartesian.
\end{enumerate}

Recall from \cite[Remark 1.1.1.7]{higher_algebra}, restricting the natural fibrations 
\begin{displaymath}
  \xymatrix{ &  \QCF^{\Delta_2 \times \Delta_1, \flat} \ar[rd]^{\phi_1^\ast} \ar[ld]_{\phi_2^\ast} & \\ \QCF^{\Delta_1, \flat} & & \QCF^{\Delta_1, \flat}}
\end{displaymath}
obtained from the cofibrations $\phi_1: \Delta_1 \rightarrow \Lambda^{0}_2 \times {1} \subset \Delta_2 \times \Delta_1$ and $\phi_2: \Delta_1 \rightarrow {1} \times \Delta_1 \subset \Delta_2 \times \Delta_1$ to $\sE$ are trivial fibrations. This is just repeated use of \cite[Proposition 4.3.2.15]{topoi}.  The statement remains true in the relative case since coCartesian fibrations are categorical fibrations.  If $(\phi_{\sE})_{1\ast}$ denotes a choice of section of $\phi_1^\ast$, the morphism $\fib$ in diagram (\ref{fiber_diagram}) is $\phi_{2}^\ast|_{\sE} \circ (\phi_{\sE})_{1\ast}$ composed with the fibration $\QCF^{\Delta_1} \rightarrow \QCF$ given by $\Delta_0 \cong \Lambda_{1}^0 \subset \Delta_1$.  Since $\Morph_{ab}$ is a full sub-left fibration of $\sM_{\QCF^{\Delta_1}}$, to remove choices we replace $\QCF^{\Delta_1}$ with $\sE$ and $\Morph_{ab}$ with the equivalent full sub-left fibration of $\sM_{\sE}$ consisting of triangles with $\s{W} \in \pscoh_{\pi}^{[a, b]}(X[A])$ and $\sV^\prime \in \Vect[-b](X[A])$. 

If we define $\Morph_{\sL_{ab}} \subset \sM_{\sE}$ to be the full sub-left fibration consisting of triangles with $(\sV^\prime, \sG) \in \sL_{ab}$ then $\Morph_{\sL_{ab}} \cong \sL_{ab} \times_{\sM_{\QCF} \times \sM_{\QCF}} \sM_{\sE}$ and we have a diagram
\begin{displaymath}
  \xymatrix{\s{A}_{ab} \ar[r] \ar[d] & \Morph_{ab} \ar[d] \\ \Morph_{\sL_{ab}} \ar[r] \ar[d]^f & \sM_{\sE} \ar[d]^{|_{(0,0) \coprod (0,1)}} \\ \sL_{ab} \ar[r] & \sM_{\QCF} \times \sM_{\QCF}}
\end{displaymath}
with all squares Cartesian.  In other words, $\s{A}_{ab}$ is the full sub-left fibration consisting of triangles with $((\sV^\prime, \sG), \s{W}) \in \sL_{ab} \times \sMab$.  Restricting distinguished triangles to $\s{W}$ induces a fibration $\Morph_{\sL_{ab}} \xrightarrow{|_{(2,1)}} \sM_{\QCF}$ with $\Morph_{\sL_{ab}} \cong \Morph_{\sL_{ab}} \times_{\sM_{\QCF}} \sMabone$ and  $\s{A}_{ab} \cong \Morph_{\sL_{ab}} \times_{\sM_{\QCF}} \sMab$ with the morphism $\s{A}_{ab} \to \Morph_{\sL_{ab}}$ (in the diagram above) induced by the Zariski open immersion $\sMab \subset \sMabone$.  Thus, $\s{A}_{ab}$ is a Zariski open substack of $\Morph_{\sL_{ab}}$.

\begin{proposition}
\label{smooth}
The morphism $\s{A}_{ab} \rightarrow \sL_{ab}$ is smooth.
\end{proposition}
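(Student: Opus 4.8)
The plan is to verify smoothness locally on $\sL_{ab}$ by base change, identifying the fibres with linear stacks of perfect complexes and then reading off smoothness from the very condition that cuts out $\sL_{ab}$. Since $\s{A}_{ab} \subset \Morph_{\sL_{ab}}$ is a Zariski open immersion (hence smooth) and smoothness may be checked after pullback along the affines of an atlas, it suffices to fix a point $(\sV, \sG): \Aalg \to \sL_{ab}$ and show that $\widetilde{H} := \Aalg \times_{\sL_{ab}} \s{A}_{ab} \to \Aalg$ is smooth.

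First I would compute $\widetilde{H}$ explicitly. Applying Proposition \ref{homotopy_coherent} to the Cartesian square defining $\widetilde{H}$ identifies $F_{\widetilde{H}}$ with the functor $B \mapsto \Map^L_{X[A]}(\sV, \sG \otimes_A B)$, and by the projection formula this is $B \mapsto \Omega^\infty(R \otimes_A B)$ where $R := \pi_\ast(\sV^\vee \otimes_{\sO_{X[A]}} \sG)$; that is, $\widetilde{H}$ is the linear stack $\mV(R) \cong \Map^L(R^\vee, -)$. Because $\sV^\vee$ is a (shifted) vector bundle and $\sG \in \bderivefl{X[A]}$, the tensor product $\sV^\vee \otimes \sG$ again lies in $\bderivefl{X[A]}$, so Lemma \ref{strong_dual} shows that $R$ is perfect on $\spec A$. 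In particular $R^\vee$ is pseudo-coherent and cohomologically bounded above, so Proposition \ref{geometric_stack_rep} recovers that $\widetilde{H}$ is geometric and almost of finite type, reducing the claim to the infinitesimal lifting criterion together with this finite presentation.

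The heart of the argument is the defining condition of $\sL_{ab}$. As in the proof of Proposition \ref{Lzar}, $(\sV, \sG) \in \sL_{ab}$ says precisely that $\tau^{\geq 1} R \cong 0$, i.e. $R$ is connective; since $R$ is perfect this is the same as $R$ having relative Tor-amplitude in $(-\infty, 0]$. The relative cotangent complex of the linear stack is $\mL_{\widetilde{H}/\spec A} \cong \mathrm{pr}^\ast R^\vee$, so connectivity of $R$ places $\mL_{\widetilde{H}/\spec A}$ in nonnegative cohomological degrees. Now for a square-zero extension $B' \to B$ with connective kernel $M$, the obstruction to lifting a $B$-point of $\widetilde{H}$ lives in $\mathrm{Ext}^1(\mathrm{pr}^\ast R^\vee, M) \cong \pi_{-1}(R \otimes_A M) = H^1(R \otimes_A M)$; connectivity of $R$ forces $R \otimes_A M$ to be connective, this group vanishes, and every point lifts, the space of lifts being a torsor under $\pi_0(R \otimes_A M)$. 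This establishes formal smoothness, and with almost-finite-presentation from Proposition \ref{geometric_stack_rep} it follows that $\widetilde{H} \to \Aalg$ is smooth; hence $\s{A}_{ab} \to \sL_{ab}$ is smooth.

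The hard part will be matching the condition defining $\sL_{ab}$ to exactly the connectivity of $R$ that annihilates the lifting obstruction, i.e. making precise that the positive-degree (stacky) directions of $\mL_{\widetilde{H}/\spec A}$ are harmless whereas any negative-degree (quasi-smooth) direction would be fatal to smoothness. This demands care with the shift conventions relating $R\mr{Hom}(\sV[-1], \sG)$, $\pi_\ast(\sV^\vee \otimes \sG)$ and $\tau^{\geq 1}$, and with assembling the lifts in a homotopy-coherent fashion rather than merely on $\pi_0$. The perfectness of $R$ supplied by Lemma \ref{strong_dual}, which itself rests on the projectivity and finite-cohomological-dimension hypotheses, is what makes $\mV(R)$ a well-behaved geometric stack to which this deformation theory applies.
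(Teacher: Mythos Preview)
Your argument follows the paper's closely: both factor $\s{A}_{ab} \to \Morph_{\sL_{ab}} \to \sL_{ab}$ with the first arrow a Zariski open immersion, identify the fibre of the second over $(\sV,\sG)$ with the linear stack $\Map^L(Q,-)$ for $Q = (\pi_\ast(\sV^\vee\otimes\sG))^\vee$, and then use the defining condition of $\sL_{ab}$ to extract smoothness. The paper finishes by observing that $Q$ is perfect of Tor-amplitude $[0,b-a+1]$ and citing \cite[3.9]{dg_moduli}, whereas you run the infinitesimal lifting criterion by hand via connectivity of $R$; these are the same fact, yours being the more self-contained packaging. One slip to fix: you set $\widetilde H := \Aalg \times_{\sL_{ab}} \s{A}_{ab}$, but the functor you then compute (and what the factorization actually reduces you to) is the fibre of $\Morph_{\sL_{ab}}$ over $(\sV,\sG)$; the fibre of $\s{A}_{ab}$ itself is only a Zariski open in that linear stack.
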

\begin{proof}

This is basically a restatement on the conditions for an pair of quasi-coherent sheaves to belong to $\s{L}_{ab}$. 
Using the same methods as in Proposition \ref{geometric_fiber}, the the fiber $\Aalg \times_{\sL_{ab}} \Morph_{\sL_{ab}}$ of the morphism $f$ (using the notation in the diagram above) over any $(\sV^\prime, \sG) \in \sL_{ab}(A)$ is the left fibration corresponding to the functor $\Map_{X[A]}^L(\sV, \sG \otimes \pi^\ast -)$.  By Proposition \ref{corepresentable}, this functor is equivalent ot $\Map^L(Q, -)$ for some $Q \in \pscoh(A)$.  Since $(\sV^\prime, \sG) \in \sL_{ab}$, $R\mr{Hom}^i(\sV^\prime[-1], \sG\otimes -) = 0$ for $i > 0$ (this follows from proper base change), implying that $\Map^L(Q, K) \cong 0$ if $K \in \QC^{(-\infty, -1]}(A)$.  It is clear that $Q \cong (\pi_{\ast} (\sV^\vee \otimes \sG))^{\vee}$, and thus $Q$ is perfect with Tor-amplitude $[0, b-a + 1]$. By \cite[3.9]{dg_moduli}, this is smooth.   

From the discussion above, $\sA_{ab} \to \Morph_{\sL_{ab}}$ is a Zariski open substack, in particular, it is smooth.  Thus the natural morphism $\sA_{ab} \to \sL_{ab}$ is the composition of two smooth morphisms, and thus smooth.

\end{proof}

\subsubsection{The atlas of $\sMabone$}

Let $L$ be a choice of ample bundle on $X$ (which exists by assumption).  Let $\iota_{n,m}: \sMabone \rightarrow  \sM_{\Vect[-b]} \times \sMabone$ denote the natural section given by the vector bundle $\oplus^m L^{\otimes n}[-b]$.  The morphism $\iota_{n,m}$ induces the fiber diagram
\begin{displaymath}
 \xymatrix{
 \iota_{n,m}^{\ast} \s{A}_{ab} := \s{A}_{ab} \times_{\sM_{\Vect[-b]} \times \sMabone} \sMabone \ar[r] \ar[d] & \s{A}_{ab} \ar[d] \\
 \sL_{ab} \times_{ \sM_{\Vect[-b]} \times \sMabone} \sMabone \ar[r] \ar[d]& \s{L}_{ab}\ar[d] \\
\sMabone \ar[r]^{\iota_{n,m}} & \sM_{\Vect[-b]} \times \sMabone }
\end{displaymath}
Proposition \ref{smooth} shows that the top right vertical morphism is smooth, while Proposition \ref{Lzar} shows the bottom right vertical morphism is smooth.  Smoothness is stable under base change, thus the composition of the left vertical morphisms is smooth.  Denoting this composed morphism as $s_{n,m}$, we will show that $s := \coprod_{n,m} s_{n,m}: \coprod_{n,m} \iota_{n,m}^\ast \s{A}_{ab} \to \sMabone$ is an epimorphism.

\begin{proposition}
$\coprod_{n,m} \iota^\ast_{n.m}\s{A}_{ab} \rightarrow \sMabone$ is an epimorphism.
\end{proposition}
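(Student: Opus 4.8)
The plan is to use the standard criterion that a morphism of stacks is an epimorphism exactly when it is an epimorphism on the associated sheaves $\pi_0$; concretely, for every $A \in \kAlg$ and every point $\sF \colon \Aalg \to \sMabone$ I must produce an $fppf$ cover $\spec A' \to \spec A$, a pair $(n,m)$, and a point of $\iota^\ast_{n,m}\s{A}_{ab}(A')$ whose image under $s_{n,m}$ is $\sF \otimes_A A'$. Unwinding the description of $\s{A}_{ab}$ as triangles $\sV \to \sG \to \sW$ with $\sG$ the object sent to $\sMabone$, this amounts to exhibiting a distinguished triangle
\[
\bigoplus\nolimits^{m} L^{\otimes n}[-b-1] \longrightarrow \sF \otimes_A A' \longrightarrow \sW
\]
on $X[A']$ with $\sW \in \pscoh^{[a,b]}_{\pi}(X[A'])$ (so $\sW \in \sMab$) and with the pair $(\bigoplus^{m} L^{\otimes n}[-b],\, \sF\otimes_A A') \in \s{L}_{ab}$; that is, a map $\sV := \bigoplus^{m} L^{\otimes n}[-b-1] \to \sF\otimes_A A'$ whose cofiber has relative Tor-amplitude $[a,b]$ and which satisfies the connectivity condition cutting out $\s{L}_{ab}$.

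First I would push the Tor-amplitude bookkeeping onto the classical locus. By Proposition \ref{relative_tor_prop}(2) relative Tor-amplitude may be computed after pullback to the underlying classical stack, and by Lemma \ref{pullback} pseudo-coherence is preserved, so I work on $t_0(X[A])$, which by flatness is a classical projective scheme over $\spec \pi_0(A)$. Since $\sF$ has relative Tor-amplitude $[a,b+1]$ it lies in $\QC^{(-\infty,b+1]}$, and by Lemma \ref{discrete} its top cohomology $\s{H} := H^{b+1}(\sF)$ is a coherent sheaf pushed forward from $t_0$. Here projectivity enters: by the defining ampleness of $L$ (Serre vanishing together with global generation of a suitable twist) there is an $n_0$ so that for all $n \geq n_0$ both (i) $R\mr{Hom}_{X[A']}(\sV,\sF) \in \QC^{(-\infty,0]}(A')$, placing $(\bigoplus^{m} L^{\otimes n}[-b],\sF)$ in $\s{L}_{ab}$, and (ii) after a Zariski (hence $fppf$) cover $\spec A' \to \spec A$ trivializing the relevant finitely generated pushforward module, there is a surjection of coherent sheaves $\bigoplus^{m} L^{\otimes n} \twoheadrightarrow \s{H}$ on $t_0(X[A'])$.

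Next I would lift this surjection to a morphism of complexes and verify the Tor-amplitude drop. Because $\sV$ is concentrated in the top cohomological degree $b+1$ and $\sF \in \QC^{(-\infty,b+1]}$, the truncation map induces an isomorphism $\pi_0 \Map_{X[A']}(\sV,\sF) \cong \mr{Hom}_{t_0(X[A'])}(\bigoplus^{m} L^{\otimes n}, \s{H})$, so the chosen surjection lifts canonically to a map $\sV \to \sF\otimes_A A'$, and I set $\sW := \cofib(\sV \to \sF\otimes_A A')$. Since our topoi have enough points, the Tor-amplitude of $\sW$ is checked fiberwise: at each point $x$ the long exact sequence from $\sV\otimes k(x) \to (\sF\otimes A')\otimes k(x) \to \sW\otimes k(x)$ together with Proposition \ref{relative_tor_prop}(4) confines $\sW\otimes k(x)$ to degrees $[a,b+1]$, and in the top degree the map $H^{b+1}(\sV\otimes k(x)) \to H^{b+1}((\sF\otimes A')\otimes k(x))$ is the reduction of $\bigoplus^{m} L^{\otimes n}\twoheadrightarrow \s{H}$, which stays surjective at every point by Nakayama; hence $H^{b+1}(\sW\otimes k(x))=0$ and $\sW\otimes k(x)\in\QC^{[a,b]}$. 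Thus the triangle above is a point of $\iota^\ast_{n,m}\s{A}_{ab}(A')$ lying over $\sF\otimes_A A'$, and since such $(n,m)$ and cover exist for every $\sF$, the map $s = \coprod_{n,m} s_{n,m}$ is an epimorphism.

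The main obstacle is arranging conditions (i) and (ii) for a single twist $n$ and, more importantly, making the cofiber's Tor-amplitude drop \emph{uniformly over the whole base} rather than only at a generic fiber; this is exactly why one passes to an $fppf$ cover trivializing the pushforward of $L^{\otimes -n}\otimes\s{H}$ and why the surjection must be realized as a map of sheaves, so that it survives $\otimes k(x)$ at every point. The connectivity condition defining $\s{L}_{ab}$ is precisely the Serre-vanishing half of this requirement, and it is what simultaneously guarantees the existence and canonical lifting of $\sV \to \sF$ and ensures we have landed in the locus over which $\s{A}_{ab} \to \s{L}_{ab}$ was shown to be smooth.
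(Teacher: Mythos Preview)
Your overall strategy agrees with the paper's: given $\sF \in \sMabone(A)$, cover $\spec A$ and on each piece produce a map $\sV = \bigoplus^m L^{\otimes n}[-b-1] \to \sF$ whose cofiber drops into $\sMab$, while the $\sL_{ab}$ connectivity condition is satisfied.  The difficulty is in \emph{how} you manufacture the map $\sV \to \sF$.

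The claimed isomorphism
\[
\pi_0 \Map_{X[A']}(\sV,\sF) \;\cong\; \mr{Hom}_{t_0(X[A'])}\bigl(\textstyle\bigoplus^{m} L^{\otimes n}, \s{H}\bigr)
\]
is not correct.  There is a map from left to right (induced by $\sF \to \sH[-b-1]$), but its fiber is controlled by $\mr{Ext}^1(\sV,\tau^{\leq b}\sF) \cong H^{b+2}\bigl(X[A'],\, L^{\otimes -n}\otimes \tau^{\leq b}\sF\bigr)$, which has no reason to vanish: $\pi_\ast$ is not $t$-exact, and condition (i) (the $\sL_{ab}$ condition) concerns $\pi_\ast(\sV^\vee\otimes\sF)$, not $\pi_\ast(\sV^\vee\otimes\tau^{\leq b}\sF)$.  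So a surjection $\bigoplus^m L^{\otimes n}\twoheadrightarrow\sH$ constructed on $t_0(X[A'])$ need not lift to a map $\sV\to\sF$ in $\QC(X[A'])$.  (There is also a sign issue: for the ampleness argument to run one wants $\bigoplus^m L^{\otimes -n}$ with $n\gg 0$, i.e.\ the point lands in $\iota_{-n,m}^\ast\sA_{ab}$; with your convention $n\geq n_0$ there are essentially no maps $L^{\otimes n}\to\sH$.)

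The paper sidesteps the lifting obstruction by building the map through the affine base rather than on $X$.  After twisting and normalizing $b=-1$, one uses the identification $H^0(\pi_\ast\sF(n)) \cong R\mr{Hom}^0_{X[B]}(\sO_X,\sF(n))$: choosing generators $\mu:\bigoplus_N \sO_{\spec B}\twoheadrightarrow H^0(\pi_\ast\sF(n))$ over the affine base and applying adjunction produces genuine maps $\bigoplus_N\sO_X\to\sF(n)$ with no obstruction, because $\sO_{\spec B}$ is projective.  But now one must check that these maps are surjective on $H^0(\sF(n))$, and this is a separate condition---condition (3) in the paper, that the counit $H^0(\pi^\ast\pi_\ast\sF(n))\to H^0(\sF(n))$ is surjective---which your argument does not address.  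The three open conditions (connectivity of $\pi_\ast\sF(n)$, finite generation of $H^0(\pi_\ast\sF(n))$, and surjectivity of the counit on $H^0$) are then shown to hold on a Zariski cover by semi-continuity and reduction to the discrete base.
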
 
\begin{proof}

Given a morphism $\sF: \spec A \rightarrow \sMabone$, we find a Zariski cover $U_{N,\alpha}$ of $\spec A$ with $\sF|_{X \times U_{N, \alpha}}$ in the image of $s$.  Per our usual convention, when no confusion can arise, the natural morphism $X\times Y \to Y$ will be denoted as $\pi$, even when $Y$ changes.  For simplicity, we assume that $b = -1$ and write $\sF(n)$ for $\sF \otimes L^{\otimes n}$.  With these assumptions $H^{0}(\sF)$ is the highest non-zero cohomology.  Define $U_{N, n}$ as the maximal Zariski open subset of $\spec A$ having the properties
\begin{enumerate}
\item $\pi_\ast(\sF(n)|_{X \times U_{N,n}}) \in \QC^{(-\infty, 0]}(X \times U_{N, n})$,
 \item there exists a surjection $\mu_{N, n}: \oplus_{N} \sO_{\spec U_{N,n}} \twoheadrightarrow H^0(\pi_{\ast}(\sF(n)|_{X \times U_{N, n}}))$.
\item \label{thethird} $H^0(\eta): H^0(\pi^\ast \pi_\ast( \sF(n)|_{X \times U_{N,n}})) \rightarrow H^0(\sF(n)|_{X \times U_{N,n}})$ is surjective.
\end{enumerate}
where $\eta$ is the natural adjuction $\pi^\ast \pi_\ast (\sF(n)|_{X \times U_{N,n}}) \to \sF(n)|_{X \times U_{N,n}}$. These are Zariski open conditions by semi-continuity and the fact that the support of a finitely generated module is Zariski closed.

We assume the existence of a $n$ such that $\spec A = \cup_N U_{N, n}$ (this will be verified below).  It is clear that we can refine this cover to an affine cover $\{\spec B_{N,\alpha}\}$.  
 We have the following isomorphisms:
\begin{align*}
H^0(\pi_\ast(\sF(n)|_{X[B_{N,\alpha}]})) & \cong R\mr{Hom}^0_{\spec B_{N, \alpha}}(B_{N, \alpha}, \pi_\ast (\sF(n)|_{X[B_{N, \alpha}]}))\\ & \cong R\mr{Hom}^0_{X[B_{N,\alpha}]}(\pi^\ast B_{N, \alpha}, \sF(n)|_{X[B_{N, \alpha}]})\\ & \cong R\mr{Hom}^0_{X[B_{N, \alpha}]}(\sO_{X[B_{N,\alpha}]}, \sF(n)|_{X[B_{N, \alpha}]})
\end{align*}
As consequences of these isomorphisms, pulling back $\mu_{N, \alpha}$, we have 
\begin{displaymath}
\pi^\ast (\mu): \oplus_{N} \sO_{X[B_{N, \alpha}]} \twoheadrightarrow  R\mr{Hom}^0_{X[B_{N, \alpha}]}(\sO_{X[B_{N,\alpha}]}, \sF(n)|_{X[B_{N, \alpha}]})\otimes_{B_{N, \alpha}} \sO_{X[B_{N, \alpha}]}
\end{displaymath}
with $H^0(\pi^\ast \mu)$ a surjection.  Further, the natural morphism $e: R\mr{Hom}^0(\sO_{X[B_{N,\alpha}]}, \sF(n)|_{X[B_{N, \alpha}]})\otimes \sO_{X[B_{N.\alpha}]} \rightarrow \sF(n)|_{X[B_{N, \alpha}]}$ factors through a the natural co-unit morphism $\eta$ and this factorization is a surjection on $H^0$ as well.
Since the composition of two surjections is a surjection, the morphism $\eta \circ \mu$ is such that $H^0(\eta \circ \mu)$ is a surjection.

  The standard cohomological long exact sequence, combined with Lemma \ref{relative_tor_prop},  shows $\cofib(e)$ is pseudo-coherent and has relative Tor-amplitude of $[a, 0]$ with top non-zero cohomology in degree $-1$.  The graded Tor-spectral sequence shows that this must be relative Tor-amplitude of $[a, -1]$.   Since $R\mr{Hom}_{X[B_{N, \alpha}]}^i(\sO_{X[B_{N.\alpha}]}, \sF(n)|_{X[B_{N, \alpha}]}) \cong R\mr{Hom}^i_{\spec B_{N, \alpha}}(B_{N, \alpha}, \pi_\ast (\sF(n)|_{X[B_{N, \alpha}]}))$, property (1) ensures that $(\oplus^N \sO_{X[B_{N, \alpha}]}, \sF(n)|_{X[B_{N, \alpha}]})$ is in $\sL_{ab}$ and thus the evaluation morphism is in $\s{A}_{ab}$.

Tensoring $\QC(X[B_{N, \alpha}])$ by $L^{-n}|_{X[B_{N, \alpha}]}$ is an autoequivalence (of the $\infty$-category), this shows
\begin{displaymath}
\sF|_{\spec B_{N, \alpha}} \cong \fib(\cofib(e) \rightarrow (\oplus^N L^{-n}|_{X[B_{N, \alpha}]})[1])
\end{displaymath}
In other words, restricted to the open cover $\spec B_{N, \alpha}$, $\sF$ is in the image of the morphism $\coprod_{N} s_{n,N}: \coprod_{N} \iota_{-n,N}^\ast \s{B}_{ab} \rightarrow \sMabone$.  Thus the proposition is proved.  

We now show the existence of an $n$ such that $\cup_N U_{N, n} \cong \spec A$.  That such a cover exists that satifies the first property is a direct consequence of our definition of projectivity and proper base change.  
For the second property, we note by Lemma \ref{strong_dual}, $\pi_\ast \sF$ is perfect.   Since $\spec A$ is affine, it is strict perfect and there exists a projective $\mathfrak{p} \to \pi_\ast \sF$ surjective on $H^0$, and the second property easily follows. 

The last property is well known when $A$ is discrete.  The general case follows from the discrete case as follows.  There is a one-to-one correspondence between Zariski open subschemes of $\spec A$ and $\spec \pi_0(A)$.  We claim the covering  and $n$ for $\spec \pi_0(A)$ will suffice for $\spec A$, via this correspondence.  To ease notation, let $\bar A := \pi_0(A)$ and assume property (\ref{thethird}) holds for $\spec \bar A$.  Since $X[A] \to \spec A$ is assumed to be flat we have the Cartesian diagram
\begin{displaymath}
  \xymatrix{ t_0(X) \ar[r]_{\iota_X} \ar[d]^{\pi_{\bar A}} & X \ar[d]^{\pi_A}\\ \spec \bar A \ar[r]^{\iota_A} & \spec A} 
\end{displaymath}
where $\iota_X$ and $\iota_A$ are the natural induced adjunctions. With the notation in the diagram, we have $\iota_X^\ast \pi_A^\ast \pi_{A\ast} \sF \cong \pi_{\bar A}^\ast \iota_A^\ast \pi_{A\ast} \sF \cong \pi_{\bar A}^\ast \pi_{\bar A\ast} \iota_X^\ast \sF$.  Since our assumptions ensure $\pi_A^\ast \pi_{A\ast} \sF \to \sF$ is surjective on $H^0$ if and only if the same is true of $\iota_X^\ast \pi_A^\ast \pi_{A\ast} \sF \to \iota_X^\ast \sF$, we have proved our claim.

\end{proof}

To finish, we have now constructed a smooth epimorphism from a $(b-a +1)$-geometric stack locally of finite type onto $\sMabone$.  An atlas of $\s{B}_{ab}$ will then provide the necessary atlas for $\sMabone$.

\section{The motivic derived Hall algebra.}

We assume that $X$ satisfies the conditions of Theorem \ref{itworks} and that $k$ is a discrete $\mZ$-algebra.  We use our explicit verification of the geometricity of $\sMtor$ to define a derived Hall algebra associated to $X$.  Restricting our fibration $\sMtor$ to the full subcategory of discrete $k$-algebras yields a geometric stack of finite type.  We will show below that this stack is locally special, allowing a well behaved Grothendieck ring of special Artin stacks over $\sMtor$. This ring, denoted $\sH(\sMtor)$  is generated by special Artin stacks subject to ``scissor'' relations \cite{Toen_grothendieck}.  

For this section, we will need a relative version of the $\mr{Gap}^0([n],\sC)$ construction in \cite[Remark 11.4]{DAGI}.  In particular, if $[n]$ is the standard linear order we have the ``arrow'' quasi-category $N([n])^{\Delta_1}$.  We denote by $\sE_n$ the full sub-left fibration of $\sM_{\bderive{X}^{N([n])^{\Delta_1}, \flat}}$ such that restricted to the fiber over $A \in \kAlg$ is an element of $\mr{Gap}^0([n], \bderive{X[A]})$. That this is a left fibration is due to the morphisms $f^\ast: \bderive{X[A]} \to \bderive{X[B]}$ being exact for any $f: A \to B$.

The $\s{E}_n$ assimilate into a simplicial object of $(\sset)_{/\kAlg}$, level-wise fibrant.     $d_n^i: \sE_n \to \sE_{n-1}$ will correspond to the $i$th face map.  One has two diagrams

\begin{align}
\label{itcommutes}
  \xymatrix{ \sE_3 \ar[r]^{d_3^1} \ar[d]^{d_3^3 \times d_2^0 \circ d_3^1} & \sE_2 \ar[d]^{d_2^2 \times d_2^0} \ar[r]^{d_2^1} & \sE_1 &  & \sE_3 \ar[r]^{d_3^2} \ar[d]^{d_2^2 \circ d_3^2 \times d_3^0} & \sE_2 \ar[d]^{d_2^2 \times d_2^0} \ar[r]^{d_2^1} & \sE_1 \\
    \sE_2 \times \sE_1 \ar[r]^{d_2^1 \times id} \ar[d]^{d_2^2 \times d_2^0 \times id} & \sE_1 \times \sE_1 & & & \sE_1 \times \sE_2 \ar[r]^{id \times d_2^1} \ar[d]^{id \times d_2^2 \times d_2^0} & \sE_1 \times \sE_1 & \\
    \sE_1 \times \sE_1 \times \sE_1 & & &  &  \sE_1 \times \sE_1 \times \sE_1 & &  }
\end{align}
In both diagrams the vertical composite $\sE_3 \to \sE_1 \times \sE_1 \times \sE_1$ is induced from the product of restrictions arising from the functors $[1] \rightarrow [3]$ sending $0 \to i, 1 \to i+1$, $i = 0, 1, 2$.  The horizontal composite is the induced restriction map arising from the functor $[1] \to [3]$ sending $0 \to 0, 1 \to 3$.


\begin{proposition}
\label{so_special}\mbox{}
  \begin{enumerate}
  \item $\sE_1 \cong \sMtor$ and is locally special.
  \item $\sE_2 \cong \sM_{\bderive{X}^{\Delta_1, \flat}}$ and is locally special.
  \item $d_2^2 \times d_2^0$ is strongly of finite type. 
  \item The commutative squares in Diagram (\ref{itcommutes}) are fiber squares.
 \item $d_3^3 \times (d_2^0 \circ d_3^0)$ and $d_3^0 \times (d_2^2 \circ d_3^3)$ are strongly of finite type.
 \item $\sE_3$ is locally special 
   \end{enumerate}
\end{proposition}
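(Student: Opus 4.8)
The plan is to prove the six assertions in the logical order $(1),(2)\Rightarrow(4)\Rightarrow(3),(5)\Rightarrow(6)$, concentrating the geometric input in the first parts and deducing the rest formally from the Cartesian squares. Throughout I work, as in this section, over discrete $k$-algebras, where ``almost of finite type'' becomes honest finite type and the notions of \emph{special} and \emph{strongly of finite type} are those of \cite{Toen_grothendieck}.

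For $(1)$ and $(2)$ I would first unwind the $\mr{Gap}^0$ construction fibrewise. In any stable $\infty$-category $\sC$ one has $\mr{Gap}^0([1],\sC)\simeq\sC$, while a gap object over $[2]$ is a cofibre sequence, which in the stable setting is the datum of a single arrow together with its canonical cofibre; thus $\mr{Gap}^0([2],\sC)\simeq\mr{Fun}(\Delta_1,\sC)$ is the arrow $\infty$-category (cf.\ \cite[Remark 11.4]{DAGI} and \cite{higher_algebra}). Since every transition functor $f^\ast$ is exact these identifications are natural in $A\in\kAlg$, so they assemble to equivalences of left fibrations, giving $\sE_1\cong\sMtor$ and $\sE_2\cong\sM_{\bderive{X}^{\Delta_1,\flat}}$. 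For local speciality: by Theorem \ref{itworks} $\sMtor$ is locally geometric almost of finite type, and the atlas built in the previous section presents it, through the corepresentable functors $\Map^L(Q,-)$ of Proposition \ref{geometric_stack_rep} and the affine, vector-bundle-type fibres of $\s{A}_{ab}$, with stabilisers that are extensions of $\mr{GL}$-type groups (the degree-zero automorphisms) by unipotent vector groups (the negative self-$\mr{Ext}$'s); these are special algebraic groups, so $\sE_1\cong\sMtor$ is locally special. For $\sE_2$ one uses the source--target map to $\sMtor\times\sMtor$, whose fibres are the mapping stacks $\mV_{P,Q}$ of Proposition \ref{homotopy_coherent}, geometric almost of finite type by Propositions \ref{corepresentable} and \ref{geometric_stack_rep} and with the same special stabilisers; local speciality of $\sE_2$ then descends from that of $\sMtor$.

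The substantive point, and the step I expect to be the main obstacle, is $(4)$: the two squares in Diagram (\ref{itcommutes}) are Cartesian. This is the derived form of the Waldhausen (``$2$-Segal'') condition that ultimately yields associativity of the Hall product. Fibrewise it is \cite[Remark 11.4]{DAGI}: in the stable $\infty$-category $\bderive{X[A]}$ a gap object over $[3]$ is equivalent to the data of its two three-term subflags glued along their common restriction to $\sE_1$, so that the left square exhibits $\sE_3$ as the homotopy pullback $(\sE_2\times\sE_1)\times_{\sE_1\times\sE_1}\sE_2$ (and symmetrically for the right square). I would promote this to a statement about the classifying left fibrations: because the squares are Cartesian in each fibre and every $f^\ast$ is exact---hence preserves the gap conditions and the pushout-equals-pullback squares defining $\mr{Gap}^0$---the fibrewise pullbacks assemble to a homotopy fibre product of left fibrations over $\kAlg$, and the classifying functor $\sM_{(-)}$, being right Quillen, preserves homotopy fibre products of fibrant objects. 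The delicate part is the homotopy coherence: realising each restriction $d_n^i$ as the map induced by a cofibration of indexing categories $N([m])^{\Delta_1}$ and trivialising the resulting auxiliary fibrations, exactly as in Proposition \ref{homotopy_coherent} and the construction of $\s{A}_{ab}$, invoking \cite[Proposition 4.3.2.15]{topoi} to split the relevant trivial fibrations coherently.

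Granting $(4)$, parts $(3)$, $(5)$ and $(6)$ follow. The map $d_2^2\times d_2^0\colon\sE_2\to\sE_1\times\sE_1$ sends a cofibre sequence to its sub and quotient $(M,N)$, and its fibre over $(M,N)$ is the extension space $\Map^L_{X[A]}(N,M[1])$; by Proposition \ref{corepresentable} this is corepresented by a pseudo-coherent object that is cohomologically bounded above (Lemma \ref{itpsuedo}), hence geometric almost of finite type by Proposition \ref{geometric_stack_rep}, so $d_2^2\times d_2^0$ is strongly of finite type, giving $(3)$. For $(5)$, the Cartesian squares of $(4)$ exhibit $d_3^3\times(d_2^0\circ d_3^0)$ and $d_3^0\times(d_2^2\circ d_3^3)$ as base changes of $d_2^2\times d_2^0$ along the projections, and strong finite type is stable under base change and composition. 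Finally $(6)$: by $(4)$, $\sE_3\cong(\sE_2\times\sE_1)\times_{\sE_1\times\sE_1}\sE_2$ is an iterated fibre product of the locally special stacks $\sE_1,\sE_2$ of parts $(1),(2)$ along the strongly-finite-type maps of $(3),(5)$; local speciality is preserved under such fibre products---the inertia of $\sE_3$ is an iterated extension of the inertia of the three layers, again built from special groups---so $\sE_3$ is locally special.
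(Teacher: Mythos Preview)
Your overall architecture matches the paper's: the identifications $\sE_1\cong\sMtor$ and $\sE_2\cong\sM_{\bderive{X}^{\Delta_1,\flat}}$ via $\mr{Gap}^0$, the $2$-Segal/Waldhausen argument for $(4)$ via \cite[Proposition 4.3.2.15]{topoi}, and the deduction of $(5)$ and $(6)$ from $(4)$ by base change are all essentially what the paper does. Your treatment of $(3)$ via the fibre $\Map^L(N,M[1])$ is the same computation the paper packages as ``the method of Proposition \ref{geometric_fiber}''.

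The one place where your argument is looser than the paper's, and where I would push back, is the proof of \emph{local speciality} in $(1)$ and $(2)$. You argue via ``the atlas built in the previous section'' and claim the stabilisers are ``extensions of $\mr{GL}$-type groups by unipotent vector groups''. Neither of these is quite how the argument runs. The atlas does not directly control the homotopy sheaves $\pi_i$ at a point, and the degree-zero automorphism group $\pi_1$ need not be $\mr{GL}$-type (think of $\sF=\sO\oplus\sO(-1)$). What one actually has to check is that for a $K$-point $\sF$ the sheaves $\pi_i(\sMtor,\sF)$ are representable by affine group schemes for $i\geq 1$ and unipotent for $i\geq 2$. The paper's mechanism is: the based loop space of $\sMtor$ at $\sF$ is $\mI_{\sF,\sF}$, which is Zariski open in $\mV_{\sF,\sF}$; the latter is corepresented by $\Map^L_K(Q,-)$ with $Q\in\pscoh(K)$ and is therefore an \emph{affine stack} in the sense of \cite{champs_affine}. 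One then invokes \cite[Theorem 2.4.5]{champs_affine}, which says precisely that the higher homotopy sheaves of an affine stack over a field are represented by unipotent group schemes. The Zariski openness of $\mI_{\sF,\sF}\subset\mV_{\sF,\sF}$ transfers this to $\pi_{i-1}(\mI_{\sF,\sF})\cong\pi_i(\sMtor,\sF)$ for $i\geq 2$, while $\pi_1$ is the (affine) automorphism group scheme. The same reduction to $\mV_{P,Q}$, together with a long exact sequence of homotopy sheaves for the fibre of $s\times t$, handles $(2)$. So your instinct to route through the corepresenting object $Q$ is right, but the missing ingredient you should name is To\"en's affine-stack theorem, not a structural claim about $\mr{GL}$-type stabilisers.
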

\begin{proof}

{\it1)}  The first statement is an extension of \cite[Lemma 11.3]{DAGI} to the relative case.  This extension is possible since the cited proof of the lemma is repeated application of \cite[Proposition 4.3.2.15]{topoi}.  The arguement goes through unchanged since we can apply this proposition to our case as well (using the fact that coCartesian fibrations are categorical fibrations).  

For the second statement, it suffices to show $\sMab$ is special.  Let $K$ be a field and $\sF: \Kalg \to \sMab$ correspond to an object $\sF \in \pscoh^{[a,b]}_\pi(X[K])$.  The fibration $\widetilde G := \Kalg \times_{\kAlg} \sMab$, considered as a fibration over $K$-Alg has a natural section $s_{\sF}$ given by $\sF$.  We must show that the resulting sheaves $\pi_n(\widetilde{G}, s_{\sF})$ are representable by group schemes over $\spec K$ for $i > 0$ and are unipotent for $i > 1$. 

Using that $\mI_{\sF, \sF}$ is the (pointed) loop space of $\widetilde{G}$ (as stacks over $\spec K$) and $\mI_{\sF, \sF}$ is a Zariski open substack of $\mV_{\sF, \sF}$, we know that $\pi_i(\widetilde G, s_{\sF}) \cong \pi_{i-1}(\mV_{\sF, \sF})$ as group sheaves for $i > 1$ and a Zariski open immersion (as sheaves) for $i = 1$.   Since $\mV_{\sF, \sF}$ is corepresented by $\Map_{\Kmod}^L(Q, -)$ with $Q \in \pscoh(K)$, by \ref{geometric_stack_rep} it is an affine stack. The result then follows from  \cite[Theorem 2.4.5]{champs_affine} (although we are not in the connected case, since we are dealing with $i > 0$, we can reduce to this case). 

{\it 2)}  The first statement proceeds exactly as above.   To show $\sM_{\widetilde{\bderive{X}}^{\Delta_1, \flat}}$ is locally special, let $K$ be a field with structure morphism $f: \spec K \to \spec k$ and $f^*: St_{k} \to St_{K}$ be the natural functor given by precomposition with the forgetful functor $K\textrm{-Alg} \to \kAlg$. If $m: \spec K \to \sM_{\widetilde{\bderive{X}}^{\Delta_1, \flat}}$ a morphism with source $\sF$ and target $\sG$ and $\sF, \sG \in \pscoh_{\pi_K}^{[a.b]}(X)$, one has a diagram of pointed stacks:
\begin{displaymath}
       \xymatrix{  & & f^\ast \sM_{\widetilde{\bderive{X}}^{\Delta_1, \flat}} \ar[d]^{s \times t} \\ \ast \ar[rru]^{s_m} \ar[rr]^{s_{\sF \times \sG}\qquad} & &f^\ast \sMtor \times f^\ast \sMtor}
\end{displaymath}
Taking the (homotopy) fiber of this morphism, $\sH := \fib (s \times t)$ is a pointed geometric stack and there is a long exact sequence of fundamental group sheaves
\begin{displaymath}
  \xymatrix{ \ldots\ar[r]& \pi_i(\sH, s) \ar[r]&  \pi_i(f^\ast \sM_{\widetilde{\bderive{X}}^{\Delta_1, \flat}}, s_m) \ar[r] & \pi_i(f^\ast (\sMtor \times \sMtor) , s_{\sF \times \sG}) \ar[r] & \ldots}
\end{displaymath}

The category of affine group schemes over $K$ and the subcategory of unipotent affine group schemes is closed under extensions and kernels \cite{unipotent}.  From above, we know $\pi_i(f^\ast (\sMtor \times \sMtor) , s_{\sF \times \sG})$are group schemes for $i \geq 1$ (unipotent for $i > 1$).  Thus it is enough to show the same for $\pi_i(\sH, s)$, $i \geq 1$.  Recall that $f^\ast$  has a left adjoint and thus preserves fiber diagrams.  This yields the following diagram with all squares Cartesian:
\begin{displaymath}
  \xymatrix{ \sH \ar[r] \ar[d] & f^* \mV_{\sF, \sG} \ar[d] \ar[r] & f^* \sM_{\widetilde{\bderive{X}}^{\Delta_1, \flat}} \ar[d]^{s \times t} \\ \spec K \ar[r] & f^! \spec K  \ar[r]^{\sF \times \sG\qquad} & f^\ast(\sMtor \times \sMtor)}
\end{displaymath}
The stack $f^\ast \spec K$ is equivalent to a sheaf since have restricted to discrete $K$-algebras.  Thus, one has $\pi_i(\sH, m) \cong \pi_i(f^\ast \mV_{\sF, \sG}, s_m)$ for $i \geq 1$, showing the result (again by \cite[Theorem 2.4.5]{champs_affine}).
 
{\it 3)} The above equivalences (first statements) show that $d_2^2 \cong s$, $d_2^1 \cong t$ and $d_2^0 \cong \cofib \cong \fib[1]$.   So, $d_2^2 \times d_2^0 \cong s \times \fib[1]$.  
Recall from \S \ref{funny_thing}, one has the moduli of distinguished triangles $\sM_{\sE}$ and the equivalence $\phi_1^\ast$.   Using methods similiar to those presented in \S \ref{funny_thing}, $\sE_2$ is equivalent to the full sub-left fibration of $\sM_{\sE}$ consisting of distinguished triangles with all objects in $\widetilde{\bderive{X}}$.  Replacing $\sE_2$ with this sub-left fibration, the morphism $d_2^2 \times d_2^0$ is naturally equivalent to $([-1] \times id) \circ (t \times s) \circ \phi_1^\ast$.   The first and last morphisms are equivalences and thus we are left showing $t \times s$ is strongly of finite type.   However, this follows by the same method as given in Proposition \ref{geometric_fiber}.

{\it 4)} This fact is well known, so we will be brief.  The nerve functor commutes with (homotopy) limits.  Thus, the fiber product $\sC := N([2])^{\Delta_1} \times N([1])^{\Delta_1} \times_{N([1])^{\Delta_1} \times N([1])^{\Delta_1}} N([2])^{\Delta_1}$ can be carried out as categories first.  With this description, $\sC \subset N([3])^{\Delta_1}$ is the largest full subcategory not containing the object $[1\to3]$.  Applying \cite[Proposition 4.3.2.15]{topoi} to the the induced restriction $\sE_3 \to \sE_2 \times \sE_1 \times_{\sE_1 \times \sE_1} \sE_2$ gives an equivalence.   Similar statements (using the left Kan extension, rather than right Kan extension) show the result for the other case.


{\it 5)} Using  {\it 4)}, $d_3^3 \times (d_2^0 \circ d_3^0)$ is the base change morphism of a morphism strongly of finite type.  Since strongly of finite type is stable under base change, the result follows. 

{\it 6)} This follows from special being stable under strongly of finite type base change.
\end{proof}

\begin{remark}
Under the above equivalence $\sE_2 \cong \sM_{\bderive{X}^{\Delta_1, \flat}}$ and $\sE_1 \cong \sMtor$, $d_2^0 \cong \cofib$, $d_2^1 \cong t$ and $d_2^2 \cong s$.  
\end{remark}

\begin{theorem}
Let $X$ be a projective scheme flat over a commutative ring $k$.  Then $\sH(\sMtor)$ has a motivic derived Hall algebra product. 
\end{theorem}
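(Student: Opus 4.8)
The plan is to define the Hall product as a convolution over the moduli $\sE_2$ of short exact sequences and to deduce associativity from the two fiber-square diagrams of (\ref{itcommutes}), which compare the two bracketings of a triple product to the length-three flag space $\sE_3$. Recall from the remark following Proposition \ref{so_special} that under the equivalence $\sE_2 \cong \sM_{\bderive{X}^{\Delta_1, \flat}}$ the face maps realize $d_2^2 \cong s$ (sub-object), $d_2^0 \cong \cofib$ (quotient), and $d_2^1 \cong t$ (total object). Working in To\"en's relative Grothendieck ring $\sH(\sMtor) = K(\mr{St}/\sMtor)$ of special Artin stacks over $\sMtor$, I would define, for two classes $[\phi_i : Y_i \to \sMtor]$,
\begin{displaymath}
  [\phi_1] \star [\phi_2] := \Bigl[\, \sE_2 \times_{\sMtor \times \sMtor} (Y_1 \times Y_2) \xrightarrow{\; d_2^1 \circ \mr{pr}\;} \sMtor \,\Bigr],
\end{displaymath}
where the fiber product is formed along $d_2^2 \times d_2^0$; that is, one pulls the external product $[Y_1 \times Y_2 \to \sMtor \times \sMtor]$ back along the sub/quotient map and then composes with the total-object map $d_2^1$.

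First I would check that this lands in $\sH(\sMtor)$. By Proposition \ref{so_special}(3) the map $d_2^2 \times d_2^0$ is strongly of finite type, so its base change to $Y_1 \times Y_2$ is as well; since special stacks are stable under strongly-of-finite-type base change (the stability used for Proposition \ref{so_special}(6)), the fiber product is again special and of finite type over $\sMtor$. As the construction is pullback followed by composition with $d_2^1$, it is additive and compatible with the scissor relations of \cite{Toen_grothendieck} (a stratification of $Y_1 \times Y_2$ pulls back to one of the fiber product), so $\star$ descends to a well-defined bilinear product. The unit is the class $[\spec k \to \sMtor]$ of the zero object: the trivial extensions $0 \to 0 \to B \to B \to 0$ and $0 \to A \to A \to 0 \to 0$ collapse the relevant fiber products, giving $1 \star [\phi] \cong [\phi] \cong [\phi] \star 1$.

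The heart of the argument is associativity. Unwinding the definition, the two bracketings of $[\phi_1] \star [\phi_2] \star [\phi_3]$ produce exactly the iterated fiber products appearing as the two vertical composites in Diagram (\ref{itcommutes}): one passes through $\sE_2 \times \sE_1$, the other through $\sE_1 \times \sE_2$. Each of the displayed squares is a fiber square by Proposition \ref{so_special}(4), exhibiting $\sE_3$ as an iterated fiber product of copies of $\sE_2$ over $\sE_1 \times \sE_1$ and thereby realizing the corresponding bracketing. Base-changing these squares along $Y_1 \times Y_2 \times Y_3 \to \sMtor \times \sMtor \times \sMtor$ (the target of the vertical composite, recording the three successive subquotients) identifies both iterated fiber products with the single stack
\begin{displaymath}
  \sE_3 \times_{\sMtor \times \sMtor \times \sMtor} (Y_1 \times Y_2 \times Y_3),
\end{displaymath}
mapped to $\sMtor$ by the horizontal composite $[0 \to 3]$ (the total object of the flag). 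Since both bracketings yield this same class, $\star$ is associative. Proposition \ref{so_special}(5) ensures the maps governing these base changes are strongly of finite type, so every intermediate stack stays special and of finite type and the identifications genuinely take place inside $\sH(\sMtor)$, while Proposition \ref{so_special}(6) provides that $\sE_3$ is itself locally special.

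The step I expect to be the main obstacle is the associativity bookkeeping: one must verify that the two fiber-square factorizations of Diagram (\ref{itcommutes}) match the two bracketings of $\star$ \emph{on the nose}, rather than up to a permutation of the three subquotient factors, and that base change along the strongly-of-finite-type maps of Proposition \ref{so_special}(5) commutes past the formation of the relative Grothendieck class. Both are formal once the dictionary between the face maps $d_2^i, d_3^j$ and the sub/quotient/total operations is pinned down, but an index slip there would silently destroy associativity; I would therefore carry these identifications out with the explicit flag description of $\sE_3 \subset \sM_{\bderive{X}^{N([3])^{\Delta_1}, \flat}}$ in hand, reducing each claimed equivalence of correspondences to the corresponding statement about $\mr{Gap}^0$ diagrams that underlies Proposition \ref{so_special}.
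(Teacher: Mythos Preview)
Your proposal is correct and follows essentially the same route as the paper: define the product as pull along $d_2^2\times d_2^0$ and push along $d_2^1$, then use the fiber squares of Diagram~(\ref{itcommutes}) together with Proposition~\ref{so_special} to identify both bracketings with the $\sE_3$-correspondence. The only cosmetic difference is that the paper packages your fiber-product identifications as the base-change formula $g^\ast\circ f_! \cong f'_!\circ g'^{\ast}$ on the Grothendieck ring and then reads off associativity from the simplicial identities among the $d_n^i$; this is exactly the formal statement underlying your ``both iterated fiber products equal the $\sE_3$ fiber product'' step.
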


\begin{proof}
This follows closely with the arguements given in \cite[Section 3.3]{toen_overview}.  By Proposition \ref{so_special} and \cite[Example 3.3]{toen_overview} there exists a Grothendieck ring of special geometric stacks, $\sH(\sMtor)$.  By Proposition \ref{so_special}, $s \times \cofib$ is strongly of finite presentation and $\sM_{\widetilde{\bderive{X}}^{\Delta_1, \flat}}$ is locally special.  Thus, using the isomorphism $\sH(\sMtor) \times \sH(\sMtor) \cong \sH(\sMtor \times \sMtor)$ one has a pullback ring homomorphism $\sH(\sMtor) \times \sH(\sMtor)  \to \sH(\sMtor \times \sMtor) \to \sH(\sM_{\widetilde{\bderive{X}}^{\Delta_1, \flat}})$.  Composing this with the natural morphism $\sH(\sM_{\widetilde{\bderive{X}}^{\Delta_1, \flat}}) \xrightarrow{t_!} \sH(\sMtor)$ gives the Hall algebra convolution product: $t_!\circ  (s \times \cofib)^\ast: \sH(\sMtor) \times \sH(\sMtor) \to \sH(\sMtor)$.   We must show this product is associative.

Recall that for a fiber square
\begin{align*}
  \xymatrix{ X \ar[r]^{f^\prime} \ar[d]^{g^\prime} & Y \ar[d]^g \\ Z \ar[r]^f & W} 
\end{align*}
of locally special geometric stacks with vertical morphisms strongly of finite type, $g^\ast \circ  f_! \cong f_!^\prime \circ g^{\prime \ast}$ (as morphisms between $\sH(Z)$  to $\sH(Y)$) .  
By Proposition \ref{so_special}(4), we can apply this to see
\begin{align*}
  (\sH(\sMtor) \ast \sH(\sMtor)) \ast \sH(\sMtor) & \cong d_{2!}^1\circ (d_2^2 \times d_2^0)^\ast \circ (d_2^1 \times id)_! \circ (d_2^2 \times d_2^0 \times id)^\ast \\ & \cong  d_{2!}^1 \circ d_{3!}^1 \circ (d_3^3 \times d_2^0 \circ d_3^1)^\ast \circ (d_2^2 \times d_2^0 \times id)^\ast 
\end{align*}
Likewise, 
\begin{align*}
 \sH(\sMtor) \ast (\sH(\sMtor) \ast \sH(\sMtor)) & \cong d_{2!}^1\circ (d_2^2 \times d_2^0)^\ast \circ (id \times d_2^1)_! \circ (id \times d_2^2 \times d_2^0)^\ast\\ & \cong  d_{2!}^1 \circ d_{3!}^2 \circ (d_2^2 \circ d_3^2 \times d_3^0)^\ast \circ (id \times d_2^2 \times d_2^0)^\ast
\end{align*}
From the remark following diagrams (\ref{itcommutes}), the bottom equations are the same morphism, and thus associativity is shown.

\end{proof}
\appendix
\section{Key properties of psuedo-coherence}

We have included this material as an appendix since many of these proofs are obtained by porting arguments from \cite{TT}.    As mentioned in \S2, depending on the context we will use either $H^i$ or $\pi_{-i}$ to represent the cohomology groups of a element in $\QC(X)$.  Given an filtered object $H$, we will use $V_{H, (k, j)}$ for a choice of $\cofib(H(k) \to H(j))$ , where $k < j$, and $V_{H, j}$ when $k = j-1$. Further notation can be found in \S \ref{notation}.

\begin{proposition}
\label{conversion}
  If $\sF$ is strict $n$-pseudo-coherent and $F$ is a choice of filtration as in Definition \ref{nspc}, then the natural morphism $F(m) \rightarrow \sF$ is a $-m$-quasi-isomorphism for all $m \leq n$.   
\end{proposition}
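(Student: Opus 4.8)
The reading I will adopt---matching the way the notion is used in the proof of Proposition~\ref{aligned}---is that a morphism $g$ is a $-m$-quasi-isomorphism precisely when $\fib(g)$ is cohomologically bounded above by $-m$, i.e.\ $\fib(g) \in \QC^{(-\infty,-m]}(X)$. Since $\fib(g) \cong \cofib(g)[-1]$ in a stable $\infty$-category, this is equivalent to showing $\cofib(F(m) \to \sF) \in \QC^{(-\infty,-m-1]}(X)$. Thus the entire statement reduces to bounding the cohomology of the cofiber of the structure map from above, and the plan is to compute this cofiber through the filtration $F$.

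First I would write $\sF \cong \varinjlim_j F(j)$ and use condition~(1) of Definition~\ref{nspc} together with cofinality to present $\sF$ as $\varinjlim_{j \geq m} F(j)$. Because forming $\cofib(F(m) \to -)$ is a pushout against the fixed object $F(m)$, it commutes with the filtered colimit in the target, giving
\[
\cofib(F(m) \to \sF) \;\cong\; \varinjlim_{j \geq m} V_{F,(m,j)}.
\]
Each $V_{F,(m,j)} = \cofib(F(m) \to F(j))$ carries a finite filtration---obtained from the composite $F(m) \to F(m+1) \to \cdots \to F(j)$ by iterated application of the octahedral axiom---whose associated graded pieces are exactly the one-step cofibers $V_{F,i}$ for $m < i \leq j$.

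The next step bounds each graded piece. For $m < i \leq n$, condition~(3) gives $V_{F,i}$ Tor-amplitude $-i$, while for $i > n$ condition~(4) gives Tor-amplitude in $(-\infty,-i]$; in either case $V_{F,i}$ has Tor-amplitude bounded above by $-i$. Invoking Proposition~\ref{absolute_tor}(2) to reduce to $\pi_0$, together with the classical fact that over a discrete ring Tor-amplitude bounded above by $c$ forces cohomology to vanish in degrees $>c$, I obtain $V_{F,i} \in \QC^{(-\infty,-i]}(X)$. Since $i > m$ gives $-i \leq -m-1$, every graded piece lies in $\QC^{(-\infty,-m-1]}(X)$. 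As this subcategory is closed under extensions, each finite extension $V_{F,(m,j)}$ lies in $\QC^{(-\infty,-m-1]}(X)$ as well; and as it is closed under filtered colimits, so does $\cofib(F(m)\to\sF)$. Shifting down by one then yields $\fib(F(m)\to\sF) \in \QC^{(-\infty,-m]}(X)$, which is the assertion.

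The step I expect to demand the most care is the closure of $\QC^{(-\infty,-m-1]}(X)$ under filtered colimits when $X$ is a stack rather than an affine, since that is exactly where the global t-structure on $\QC(X)$ enters. I would dispatch it by recalling that for a geometric stack with affine diagonal one has $\QC(X) \cong \varprojlim_\Delta \QC(U_n)$ for the \v{C}ech nerve of an atlas, and that the good-truncation functors are computed termwise on this description, so $\tau^{>-m-1}$ commutes with filtered colimits just as in the affine case. The other mildly delicate point is justifying that Tor-amplitude merely bounded above by $-i$ (as in condition~(4), with no lower bound) suffices to place the possibly non-perfect pieces $V_{F,i}$ with $i>n$ in $\QC^{(-\infty,-i]}(X)$; this follows from the same $\pi_0$-reduction applied to the top nonzero cohomology degree and does not require perfectness.
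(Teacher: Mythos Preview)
Your proof is correct and takes a genuinely different route from the paper's. The paper invokes the spectral sequence of a filtered object from \cite[\S 11]{DAGI}: it observes that the $E_1$-page satisfies $E_1^{p,q} = \pi_{p+q}(V_{F,p}) = 0$ for $q<0$, then introduces the truncated filtration $F^m$ (with $F^m(j)=F(j)$ for $j\le m$ and $F^m(j)=F(m)$ for $j>m$) and compares the two spectral sequences to conclude. You instead compute $\cofib(F(m)\to\sF)$ directly as $\varinjlim_{j\ge m} V_{F,(m,j)}$, bound each finite stage via closure of $\QC^{(-\infty,-m-1]}$ under extensions, and pass to the colimit. Both arguments rest on the identical input---that the Tor-amplitude condition in Definition~\ref{nspc} forces $V_{F,i}\in\QC^{(-\infty,-i]}$---which the paper simply asserts ``by Definition~\ref{nspc}'' at the $E_1$-page and which you spell out via the $\pi_0$-reduction. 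Your route avoids spectral sequences entirely and makes the mechanism (extension- and colimit-closure of the coconnective part of a $t$-structure) fully explicit; the paper's route is terser and leans on standard machinery, but is otherwise doing the same bookkeeping in disguise.

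Two small remarks. First, your caution about filtered colimits in $\QC^{(-\infty,-m-1]}(X)$ for non-affine $X$ is unnecessary: for any $t$-structure compatible with filtered colimits (as is the case here, since cohomology is computed affine-locally and pullback preserves colimits) the coconnective part is automatically closed under them. Second, your handling of the non-perfect pieces $V_{F,i}$ for $i>n$ is fine, but the cleanest justification is the one implicit in the proof of Lemma~\ref{itpsuedo}: if $V$ has Tor-amplitude bounded above by $-i$, then inductively $V\otimes_A \tau_{\le n}(A)\in\QC^{(-\infty,-i]}$ for every Postnikov stage (using the cofiber sequences with graded pieces $\pi_n(A)[n]$, each discrete after shift), and one reads off $H^k(V)=H^k(V\otimes_A\pi_0(A))$ at the top degree. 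This is exactly what the paper is silently using when it writes ``by Definition~\ref{nspc}.''
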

\begin{proof}
Let $m \leq n$.  The standard t-structure satisfies the conditions of \cite[\S11]{DAGI}.  Thus we have the converging spectral sequence  
\begin{displaymath}
E_r^{pq} = \operatorname{image}(\pi_{p+q}V_{F, (p -r, p)} \rightarrow \pi_{p+q}V_{F, (p -1, p+r-1)}) \rightarrow \pi_{p+q}(\s{F})
\end{displaymath}
Setting $r = 1$, by Definition \ref{nspc}, $\pi_{p + q} V_{F, (p-1, p)} \cong 0$ for $q < 0$.   If 
\begin{displaymath}
  F^m(j) := \begin{cases} F(j) & j \leq m \\ F(k) & j >  m \end{cases}
\end{displaymath}
then $F^m$ is a filtered object with $\varinjlim_\mZ F^m \cong F(m)$ and there exists a natural morphism of filtered objects $F^k \to F$.   The above observation shows
\begin{displaymath}
  (E_r^{p,q})_{F^m} \cong (E_r^{p,q})_F \textrm{ for } \begin{cases} r \in \mathbb{N} & p + q < m \\ r = 1 & p + q = m \end{cases} 
\end{displaymath}
This is enough to show the desired result.
\end{proof}

\begin{corollary}
\label{spcspcalt}
If $\sF$ is strict pseudo-coherent on $\spec A$, then there exists a $P \in \Perf(A)$ such that $P$ has Tor-amplitude with lower bound $n$ and a  $n$-quasi-isomorphism $\phi: P \rightarrow \sF$, i.e., such that $\pi_k(P) \rightarrow \pi_k(\sF)$ is an isomorphism for $k < n$ and surjective for $k = n$.
\end{corollary}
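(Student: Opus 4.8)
The plan is to deduce this immediately from Proposition \ref{conversion}; the only genuine content is the bookkeeping observation that a strict pseudo-coherent object is strict $n$-pseudo-coherent for \emph{every} $n$ using one fixed filtration, after which the corollary is a direct reading of \ref{conversion}.

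First I would fix a filtration $F$ of $\Perf(A)$ witnessing strict pseudo-coherence of $\sF$ as in Definition \ref{spc}, so that $\sF \cong \varinjlim F$, one has $F(i)=0$ for $i \ll 0$, and each $\cofib(F(i-1)\to F(i))$ has Tor-amplitude exactly $-i$. I claim this same $F$ exhibits $\sF$ as strict $n$-pseudo-coherent in the sense of Definition \ref{nspc} for every $n$: conditions (1) and (2) hold because $F(i)=0$ for $i\ll 0$ and every $F(i)$ already lies in $\Perf(A)$; condition (3) is immediate from Definition \ref{spc}; and condition (4) holds vacuously, since Tor-amplitude exactly $-i$ is in particular contained in $(-\infty,-i]$. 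With this in hand, Proposition \ref{conversion} applies for every $n$.

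Next I would set $P := F(n)$ and take $\phi$ to be the structure morphism $F(n)\to\sF$. Then $P$ is perfect by construction (it is one of the terms $F(i)\in\Perf(A)$), and Proposition \ref{conversion} says precisely that $\phi$ is a quasi-isomorphism in the asserted range. For the Tor-amplitude bound I would observe that $P=F(n)$ is a \emph{finite} extension (finite because $F(i)=0$ for $i\ll 0$) of the successive cofibers $\cofib(F(i-1)\to F(i))$ with $i\le n$, each of Tor-amplitude $-i\ge -n$. Applying the two-out-of-three stability of finite Tor-amplitude from Proposition \ref{absolute_tor}(3) up the filtration then bounds the Tor-amplitude of $P$ below by $-n$, which is the claimed lower bound.

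I expect the only subtlety to be reconciling the cohomological indexing of ``$-m$-quasi-isomorphism'' used in Proposition \ref{conversion} with the homotopy indexing by $\pi_k$ used in the corollary. Tracing the identification $H^k=\pi_{-k}$ shows that the ``$-n$-quasi-isomorphism'' of \ref{conversion} is exactly an isomorphism on $\pi_k$ for $k<n$ together with surjectivity at $k=n$, matching the statement verbatim; this is a pure relabelling ($k\leftrightarrow -k$) and carries no further mathematical weight. Since $n$ is arbitrary, the construction produces such a perfect approximation $P$ in every degree, as required.
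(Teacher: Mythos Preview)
Your proof is correct and follows the same approach as the paper, which simply says ``This is just the previous proposition with the observation that each $F(n)$ is perfect (being built by a finite number of steps with a finite number of projectives).'' You are in fact more careful than the paper: you explicitly verify that the strict pseudo-coherent filtration also witnesses strict $n$-pseudo-coherence, you check the Tor-amplitude lower bound (which the paper's proof omits), and you reconcile the cohomological/homotopical indexing. One small remark: Proposition~\ref{absolute_tor}(3) as stated only gives \emph{finiteness} of Tor-amplitude, not the explicit lower bound $-n$; for that you want the quantitative version in Proposition~\ref{relative_tor_prop}(4)(b) (or simply the underlying long exact sequence argument), but this is a citation nicety rather than a gap.
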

\begin{proof}
This is just the previous proposition with the observation that each $F(n)$ is perfect (being built by a finite number of steps with a finite number of projectives), this gives the result. 
\end{proof}

\begin{lemma}
\label{pullbacka}
If $\sF$ is pseudo-coherent on $X$, then $\iota_X^\ast(\sF)$ is pseudo-coherent on $t_0(X)$.
\end{lemma}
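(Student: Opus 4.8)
The plan is to reduce to the affine, strict pseudo-coherent case using the locality built into the definition of pseudo-coherence, and then to push a realizing filtration through $\iota_X^\ast$. First I would choose an open affine cover $\{U_\alpha = \spec A_\alpha\}$ of $X$ on which each $\sF|_{U_\alpha}$ is strict pseudo-coherent. Since $\iota_X$ is the counit $t_0(X) \to X$, the truncations $t_0(U_\alpha) = \spec \pi_0(A_\alpha)$ are open affines covering $t_0(X)$, and pullback commutes with restriction to opens; hence $\iota_X^\ast(\sF)|_{t_0(U_\alpha)} \cong \iota_{A_\alpha}^\ast(\sF|_{U_\alpha})$, where on $\spec A_\alpha$ the functor $\iota_{A_\alpha}^\ast$ is simply $(-)\otimes_{A_\alpha}\pi_0(A_\alpha)$. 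It therefore suffices to show that, for affine $\spec A$, the functor $\iota_A^\ast$ sends a strict pseudo-coherent object to a strict pseudo-coherent object on $\spec \pi_0(A)$.

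So fix a filtered object $F$ of $\Perf(\spec A)$ realizing strict pseudo-coherence as in Definition \ref{spc}, and consider the termwise pullback $\iota_A^\ast F$, a filtered object of $\QC(\spec \pi_0(A))$. I would then verify the two conditions of Definition \ref{spc} for $\iota_A^\ast F$. The vanishing $\iota_A^\ast F(i) = 0$ for $i \ll 0$ is immediate, and each $\iota_A^\ast F(i)$ lies in $\Perf(\spec \pi_0(A))$ because pullback preserves perfect objects (as recalled in the proof of Lemma \ref{local_prop}). Moreover $\iota_A^\ast$ is a symmetric monoidal left adjoint between stable $\infty$-categories, hence colimit-preserving and exact; this yields both $\iota_A^\ast \sF \cong \varinjlim \iota_A^\ast F$ and the identification $\cofib(\iota_A^\ast F(i-1) \to \iota_A^\ast F(i)) \cong \iota_A^\ast \cofib(F(i-1)\to F(i))$.

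The only point requiring care — and the sole place where the non-flatness of $\iota_X$ could cause trouble — is that these cofibers retain Tor-amplitude exactly $-i$. Here I would use that a perfect object of Tor-amplitude $[-i,-i]$ is, by the Example following Lemma \ref{perfect_finite_tor}, a locally free sheaf placed in cohomological degree $-i$, together with the base-change stability of local freeness (the observation ``the pullback of a vector bundle is a vector bundle'' invoked in Lemma \ref{local_prop}); equivalently, Proposition \ref{absolute_tor}(2) equates the Tor-amplitude of $V$ over $A$ with that of $V\otimes_A \pi_0(A)$ over $\pi_0(A)$, after shifting to reduce to the $[0,0]$ case. Either way $\iota_A^\ast \cofib(F(i-1)\to F(i))$ again has Tor-amplitude $-i$, so $\iota_A^\ast F$ satisfies Definition \ref{spc} and $\iota_{A_\alpha}^\ast(\sF|_{U_\alpha})$ is strict pseudo-coherent. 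As $\alpha$ ranges over the cover, this exhibits $\iota_X^\ast(\sF)$ as pseudo-coherent on $t_0(X)$. The argument is thus entirely formal once the base-change stability of vector bundles is in hand, which is exactly the expected subtlety given that $\iota_X$ is a (non-flat) closed immersion.
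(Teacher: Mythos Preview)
Your proposal is correct and follows essentially the same approach as the paper: reduce to the affine strict pseudo-coherent case, push the filtration through $\iota_A^\ast = (-)\otimes_A \pi_0(A)$, and invoke Proposition~\ref{absolute_tor}(2) to see the cofibers retain the required Tor-amplitude. Your write-up is more explicit about the cover of $t_0(X)$ and about why pullback commutes with the colimit and the cofibers, but the argument is the same.
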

\begin{proof}
It suffices to work locally, and thus with strict pseudo-coherent objects on affine schemes.  In this case, for any $A \in \kalg$, $\iota_A^\ast A \cong \pi_0(A)$ and $\iota_A^\ast$ maps $Perf(A)$ to $Perf(\pi_0)$.    If $\sF$ is a strict pseudo-coherent object, let $F$ be a associated filtered object.  We can form a filtered object associated to $\iota_A^\ast (\sF) \cong \sF\otimes_A \pi_0(A)$ by $F^\prime(n) = F(n)\otimes_A \pi_0(A)$. By Proposition \ref{absolute_tor}, $F(n)$ and $\iota_A^\ast F(n)$ have the same Tor-amplitude.  Since pullback commutes with colimits, $F^\prime$ satisfies all the conditions necessary for $\iota_A^\ast(\sF) \in \pscoh(\pi_0(A))$.
\end{proof}

\begin{lemma}\mbox{}
\label{portingTT}
  \begin{enumerate}
  \item Let $\sF \to \sG \to \sH$ form a distinguished triangle, then the following are true
    \begin{enumerate}
    \item If $\sF$ is $n+1$-psuedo-coherent and $\sG$ is $n$-psuedo-coherent, then $\sH$ is $n$-psuedo-coherent.
    \item If $\sG$ and $\sH$ are $n$-psuedo-coherent, then so is $\sG$
    \item If $\sG$ is $n+1$-psuedo-coherent and $\sH$ is $n$-psuedo-coherent, then $\sF$ is $n+1$-psuedo-coherent.
    \end{enumerate}
  \item $\sF \oplus \sG$ is $n$-psuedo-coherent if and only if $\sF$ and $\sG$ are.
  \end{enumerate}
\end{lemma}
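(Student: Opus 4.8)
The plan is to port the corresponding statements of \cite{TT} (cf.\ \cite{SGA6}), working throughout with the cohomological long exact sequence and the resolution description of $n$-pseudo-coherence. Since $n$-pseudo-coherence is defined locally, I would first reduce to the case $X = \spec A$ and to \emph{strict} $n$-pseudo-coherent objects. On an affine the convenient reformulation is the one extracted from Proposition \ref{conversion} and Corollary \ref{spcspcalt}: a strict $n$-pseudo-coherent $\sF$ is precisely an object which is cohomologically bounded above and which, for every $m \le n$, admits a perfect $P_m \in \Perf(A)$ together with a $-m$-quasi-isomorphism $P_m \to \sF$ (that is, $H^j(P_m) \to H^j(\sF)$ is an isomorphism for $j > m$ and a surjection for $j = m$); the converse is the cell-by-cell construction carried out in the proof of Proposition \ref{aligned}. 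Thus in each case of part (1) it suffices to produce, for every relevant $m$, a perfect approximation of the output object of the prescribed connectivity.

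For part (1) I would construct these approximations by the same octahedral induction as in Proposition \ref{aligned}, rather than by naively taking a (co)fiber of chosen approximations of the hypothesized objects. Given the triangle $\sF \to \sG \to \sH$, one feeds the approximations of the two hypothesized terms into the inductive procedure and attaches finitely many copies of $\sO_X$ (suitably shifted) to correct cohomology one degree at a time, from the top down; the attaching data is finite at each stage because the relevant cohomology groups are finite type over $\pi_0(A)$, which holds by Lemma \ref{pullbacka} together with Proposition \ref{absolute_tor}. The only arithmetic to check is the degree at which the construction stabilizes, and this is dictated entirely by the position of the connecting homomorphism in the cohomology long exact sequence. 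For $\sH \cong \cofib(\sF \to \sG)$ the boundary map is $H^{j}(\sH) \to H^{j+1}(\sF)$, so pinning down $H^n(\sH)$ requires control of $H^{n+1}(\sF)$; this is exactly why $\sF$ must be $(n+1)$-pseudo-coherent in (a). For the fiber $\sF \cong \fib(\sG \to \sH)$ the boundary is $H^{j-1}(\sH) \to H^{j}(\sF)$, so $\sH$ is the term that must be one level better, giving (c), while for the middle term $\sG$ no shift is needed, giving (b) (here the intended hypotheses are that the two \emph{outer} terms $\sF$ and $\sH$ are $n$-pseudo-coherent). In each case the claimed connectivity of the resulting map is confirmed by a five-lemma on the evident ladder of long exact sequences, the one non-formal point being the surjectivity and injectivity at the boundary degree, which is a short diagram chase using that the relevant neighboring term is controlled exactly one degree deeper.

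For part (2), the forward implication is immediate: if $F, G$ are filtrations witnessing strict $n$-pseudo-coherence of $\sF, \sG$, then $F \oplus G$ witnesses it for $\sF \oplus \sG$, since conditions (1)--(4) of Definition \ref{nspc} only involve perfectness and the Tor-amplitude of the successive cofibers, both preserved by finite direct sums (Proposition \ref{absolute_tor}). For the converse I would use that $\sF$ is a retract of $\sF \oplus \sG$ via the split triangle $\sF \to \sF \oplus \sG \to \sG$: splitting off the summand from a perfect approximation of $\sF \oplus \sG$ yields a perfect approximation of $\sF$ of the same connectivity, using that $\Perf(A)$ is idempotent-complete and that passage to a direct summand is exact, hence preserves the isomorphism and surjectivity ranges on cohomology. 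Alternatively one applies part (1) to the split triangle directly.

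The step I expect to be the genuine obstacle is the coherent realization of the maps in the triangle at the level of perfect approximations. Perfect approximations are not functorial, and a map $\sG \to \sH$ need not lift to chosen approximations $P_\sG \to P_\sH$: the obstruction lies in $\mr{Hom}(P_\sG, C[1])$, where $C = \cofib(P_\sH \to \sH)$ is cohomologically bounded above, and it need not vanish. My way around this is precisely to avoid forming the output as a (co)fiber and instead to build it by the octahedral cell-attachment induction of Proposition \ref{aligned}, where compactness and perfectness of the cells together with finite-typeness of the cohomology guarantee that the construction terminates with finite data in each degree; the homotopy-coherent bookkeeping of the successive triangles is then handled exactly as in that proposition.
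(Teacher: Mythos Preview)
For part (1), the paper takes precisely the route you explicitly rule out: it \emph{does} lift the map $\sF \to \sG$ to a map of filtrations. The obstruction you identify is real, but it is dissolved rather than bypassed, by replacing the given filtrations with cellular ones via \cite[Lemma~2.2]{EKMM} or \cite[Lemma~1.9.4]{TT}. Once $F(k) \to G(k)$ exists compatibly with $\sF \to \sG$, one sets $H(k) := \cofib(F(k-1) \to G(k))$; the successive cofiber $V_{H,k+1}$ is then $\cofib(V_{F,k} \to V_{G,k+1})$, and since $V_{F,k}$ is projective of Tor-amplitude $-k$ while $V_{G,k+1}$ has Tor-amplitude $-k-1$, the map is null and $V_{H,k+1} \cong V_{F,k}[1] \oplus V_{G,k+1}$, which has the required Tor-amplitude. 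Your cell-attachment alternative is not obviously wrong, but the step ``the attaching data is finite at each stage because the relevant cohomology groups are finite type over $\pi_0(A)$'' is not justified as written: at stage $m$ you need finite generation of the top cohomology of $\fib(F(m) \to \sH)$, and over a non-coherent $\pi_0(A)$ this does not follow from finite-typeness of the cohomology of $\sH$ alone (kernels of maps between finite modules need not be finite). To make your argument go you would have to invoke the classical lemma on $t_0(X)$ first, which is essentially importing what you are proving.

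For part (2) your retraction argument has a genuine gap. Given a perfect $P$ with a $(-n)$-quasi-isomorphism $P \to \sF \oplus \sG$, the idempotent on $\sF \oplus \sG$ has no reason to lift to $P$, so idempotent-completeness of $\Perf(A)$ is not applicable. The naive composite $P \to \sF \oplus \sG \to \sF$ is surjective on $H^j$ for $j \ge -n$ but is \emph{not} injective for $j > -n$ (its kernel there is $H^j(\sG)$), so it is not a $(-n)$-quasi-isomorphism. Your fallback ``apply part (1) to the split triangle'' also fails: none of (a)--(c) fires when only the middle term is known to be $n$-pseudo-coherent. The paper's argument is genuinely more delicate: first handle the case where $\sF,\sG$ are already in the correct cohomological range by splicing the filtration $H$ of $\sF\oplus\sG$ below level $n$ onto the constant value $\sF$ above (the map $H(n)\to\sF$ coming from $H(n)\to\sH\to\sF$), and then reach the general case by a descending induction that invokes part (1) at each step to peel off one degree.
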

\begin{proof}
{\it 1)} By shifting triangles, it suffices to prove the first assertion.  The question is local, so we can assume $X$ is affine and $\sF, \sG$ are strict pseudo-coherent.  Using the methods outlined in \cite[Lemma 2.2]{EKMM}, or using an extension of \cite[Lemma 1.9.4]{TT} we can assume that the morphism $\sF \to \sG$ is cellular: i.e., there exists a filtrations $F, G$ of $\sF$ and $\sG$(respecitively) such that we have the following commutative diagram for all $k$.
\begin{displaymath}
  \xymatrix{ F(k) \ar[r] \ar[d] & G(k) \ar[d] \\ \sF \ar[r]& \sG}
\end{displaymath}
We then set $H(k) = \cofib (F(k-1) \to G(k))$.  It is clear that for $k \leq n$, $H(k)$ is a perfect object. Further, since we have the diagram
\begin{displaymath}
  \xymatrix{ F(k-1) \ar[d] \ar[r] & G(k) \ar[d] \ar[r] & H(k) \ar@{-->}[d] \\ F(k) \ar[r]& G(k+1) \ar[r]& H(k+1)} 
\end{displaymath}
we can imply the existence of the dotted arrow, making $H$ into a filtered object.   We will show that this choice of $H$ makes $\sH$ $n$-psuedo-coherent.  The first two conditions of Definition \ref{nspc} are clearly satisfied.  For the third, by playing with fiber/cofiber squares, 
$V_{H, k+1} \cong \cofib(V_{F,k} \to V_{G, k+1})$ (this follows easily in the dg-world from the explicit description of the cone). Since we are working locally, and $V_{F, k}$ is a projective module, the morphism $V_{F, k} \to V_{G, k+1} \cong 0$, thus $V_{H, k+1} \cong V_{F,k}[1] \oplus V_{G, k+1}$ and has Tor-amplitude of $-k-1$.  

Lastly, reindexing $F$: $F^\prime(i) := F(i-1)$ we have $\varinjlim F^\prime \cong \sF$ and a morphism of filtered objects $F^\prime \to G$.  Levelwise, $\sH(k) \cong \cofib(F^\prime(k) \to G(k))$.  By interchanging limits $\varinjlim H(k) \cong \varinjlim \cofib(F^\prime(k) \to G(k)) \cong \cofib (\varinjlim F(k) \to \varinjlim G(k)) \cong \cofib (\sF \to \sG)$, and thus the result.

{\it 2)} Let $\sH := \sF \oplus \sG$.  Working locally, we may assume that $\sH$ is strict $n$-psuedo-coherent with filtration $H$.  We first assume that $\sF, \sG \in \QC^{(-\infty, n]}(X)$.  We claim that setting
\begin{displaymath}
  F(i) = \begin{cases} H(i)  & i \leq n \\ \s{F} & i > n \end{cases}
\end{displaymath}
will make $\sF$ $n$-psuedo-coherent.  The morphism $F(n) \to F(n+1)$ is induced from $H(n) \to \sF\oplus\sG \to \sF$.  Clearly $\varinjlim F \cong \sF$ and Definition \ref{nspc}(1-3) hold.  Definition \ref{nspc}(4) holds since $\cofib(F(i-1) \to F(i)) \cong 0$ for $i > n+1$, while $\cofib (F(n) \to F(n+1)) \cong \cofib (H(n) \to \sF) \in \QC^{(-\infty, -1]}(X)$ (since $H(n) \to \sH$ is a $n$-quasi-isomorphism by Proposition \ref{conversion}).

To prove the general statement, we work by descending induction.  Working locally, we can assume $\sF \oplus \sG$ is bounded above.  Applying our above case, $\sF$ and $\sG$ are $N$-psuedo-coherent for some $N > n$. Assume that we have shown for $n < k \leq N$.  Let $F_k$ and $G_k$ be a choice of accompanying filtration for $\sF$ and $\sG$.  We have a triangle $F_k(k) \oplus G_k(k) \xrightarrow{i} \sF \oplus \sG$.  Since both of these objects are $(k-1)$-pseudo-coherent, by Proposition \ref{conversion} and Proposition \ref{portingTT}(1), $\cofib(i) \in \QC^{(-\infty, k-1)}(X)$, is $k-1$ psuedo-coherent, and decomposes as $\cofib(F_k(k) \to \sF) \oplus \cofib(G_k(k) \to \sG)$.  Applying our case above, we see $\cofib F_k \to \sF$ is $k-1$-psuedo-coherent.  Applying Proposition \ref{portingTT}(1) again, $\sF$ is $k-1$-psuedo-coherent, and induction gives the statement.  
\end{proof}

\begin{lemma}
\label{perfect_finite_tora}
Let $X$ be a geometric $D^-$ stack.
\begin{enumerate}
\item $\Perf(X) \subset \pscoh(X)$ and is defined by the property of locally finite Tor-amplitude.
\item $\sF \in \pscoh$ then $\tau^k(\sP)$ is $-k$-pseudo-coherent for all $k$. 
\end{enumerate}
\end{lemma}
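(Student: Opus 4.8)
The two parts are essentially independent, and throughout I would reduce to the affine case $X=\spec A$, since pseudo-coherence, local finite Tor-amplitude, and membership in $\Perf$ are all local; the arguments are ports of \cite{TT} and \cite{SGA6}. For part (1) the inclusion and one implication are formal. Since $\sO_X$ is flat it has Tor-amplitude $[0,0]$, and by Proposition \ref{absolute_tor}(3) objects of locally finite Tor-amplitude are closed under cofibers, shifts, and retracts; as $\Perf(X)$ is the thick subcategory generated by $\sO_X$, every perfect object has locally finite Tor-amplitude. For $\Perf(X)\subseteq\pscoh(X)$ I would present a perfect object locally as a retract of a finite cell $A$-module (cf. \cite{EKMM}); its skeletal filtration $F$ has graded pieces $\cofib(F(i-1)\to F(i))$ equal to finite sums of shifts $\bigoplus A[i]$, of Tor-amplitude exactly $-i$, so $F$ witnesses strict pseudo-coherence in the sense of Definition \ref{spc}, and Lemma \ref{portingTT}(2) passes this to retracts.

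The substance of part (1) is the converse: a pseudo-coherent $\sF$ of local finite Tor-amplitude $[a,b]$ is perfect. By Proposition \ref{absolute_tor}(2) the Tor-amplitude may be tested after $\otimes\,\pi_0(A)$, and pseudo-coherence gives locally the perfect filtration $F$ of Definition \ref{spc}, with $F(m)$ realizing the cohomology of $\sF$ above a degree depending on $m$ (Corollary \ref{spcspcalt}). The key point, ported from \cite{TT} and \cite{SGA6}, is that finite Tor-amplitude bounded below by $a$ forces the bottom syzygy, i.e. the good truncation of $F$ at the level realizing degree $a$, to be flat; being finitely presented by pseudo-coherence, it is then a finitely generated projective module, hence perfect. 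Splicing this flat cokernel back into the top (finite, perfect) part of the filtration yields a finite filtration by perfects with only finitely many nonzero graded pieces, so $\sF$ is perfect.

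For part (2), fix $k$ and work locally with a strict filtration $F$ of $\sF$ (available by Proposition \ref{aligned}, or directly from the definition). Let $m_0$ be the level at which the brutal truncation $F(m_0)$ realizes the full cohomology of $\sF$ in degrees $\geq k$; by Proposition \ref{conversion} the composite $F(m_0)\to\sF\to\tau^{k}\sF$ is then an isomorphism on cohomology in degrees $>k$ and a surjection in degree $k$, so its cofiber $C$ lies in cohomological degrees $<k$. I would then define a new filtration $G$ by $G(i)=F(i)$ for $i\leq m_0$ and $G(i)=\tau^{k}\sF$ for $i>m_0$, with structure map $F(m_0)\to\sF\to\tau^{k}\sF$ at level $m_0+1$ and identities thereafter. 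The pieces for $i\leq m_0$ are perfect with $\cofib(G(i-1)\to G(i))$ of Tor-amplitude $-i$, inherited from $F$; the single new piece $C$ is cohomologically bounded above by the degree required in Definition \ref{nspc}(4); and the remaining pieces vanish. This exhibits $\tau^{k}\sF$ as $(-k)$-pseudo-coherent.

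The main obstacle is the flatness of the bottom syzygy in the converse of part (1): this is the one step that is genuine mathematics rather than bookkeeping. It is precisely where Proposition \ref{absolute_tor}(2) earns its keep, reducing the flatness of a finitely presented module of finite Tor-amplitude to the classical discrete statement that finite flat dimension together with the appropriate Tor-vanishing makes a syzygy flat; everything else is a matter of reorganizing the given filtrations.
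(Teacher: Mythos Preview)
Your proof is correct and follows essentially the same route as the paper. Part (2) is identical: the paper defines the same filtration $F^k(i)=F(i)$ for $i\le -k$ and $F^k(i)=\tau^k\sF$ for $i>-k$, citing Proposition~\ref{conversion} for the quasi-isomorphism at the splice. For part (1), both of you localize to the affine case, obtain $\Perf\subset\pscoh$ from cellular filtrations (the paper likewise points to \cite{EKMM}), and for the converse reduce via the syzygy argument (the paper cites \cite[2.2.12]{TT}) to Tor-amplitude $[0,0]$.

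The one place the paper is more explicit than your sketch is the endgame of that reduction. You write that the bottom syzygy is ``flat and finitely presented, hence projective,'' appealing to Proposition~\ref{absolute_tor}(2) to pass to $\pi_0(A)$. The paper actually carries this out: it uses the equivalence between projective $A$-modules and projective $\pi_0(A)$-modules to lift a projective $P$ with $\pi_0(P)\cong\pi_0(A)\otimes_A\sF$, lifts the identification to a map $P\to\sF$, and then argues the cofiber vanishes because it vanishes after $\otimes\,\pi_0(A)$ (by Tor-amplitude $[0,0]$) and nothing nonzero can have trivial $\pi_0(A)$-base change. This lifting-and-cofiber step is the genuinely derived content you correctly flagged as the main obstacle; your invocation of Proposition~\ref{absolute_tor}(2) gestures at it but does not by itself close the argument.
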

\begin{proof}

{\it 1.}  Let $\sF$ be pseudo-coherent and locally finite Tor-amplitude. By refining the cover given for locally finite Tor-amplitude and pseudo-coherence, one can assume the existence of a cover $U : = \coprod \spec A_i$ with $\sF \big|_{\spec A_i}$ strict pseudo-coherent and finite Tor-amplitude.  The general case then will follow from the affine case. We must show that for $\sF \in \QC(A)$  strict pseudo-coherent and finite Tor-amplitude is necessary and sufficient for $\sF$ to be perfect.  In the case that $A$ is discrete, this is \cite[I 5.8.1]{SGA6}.  

Using standard homological methods, e.g., \cite[Proposition 2.2.12]{TT}, we can reduce to the case that $\sF$ has Tor-amplitude in $[0,0]$.  From \cite{TT} and Lemma \ref{pullback} , we know that $\pi_0(A)\otimes_A \sF$ is projective.  
Let $P$ be a projective $A$-module such that $\pi_0(P) \cong \pi_0(A)\otimes \sF$. The existence of this follows from the well known equivalence between projective $A$-modules and projective $\pi_0(A)$ modules.  Using the projectivity of $P$, we can lift the map $\pi_0(A)\otimes P \rightarrow \pi_0(A) \otimes \sF$ to a map $P \rightarrow \sF$.
We claim this is an equivalence.  For if not, then the homotopy cofiber will be non-trivial.  However, the Tor-amplitude of $\sF$ ensures that tensoring the homotopy fiber by $\pi_0(A)$ is trivial (since $\pi_0(A)\otimes \sF \cong \pi_0(A) \otimes P$).  However, $A$ is a unital algebra, thus this is impossible since then the identity would act trivially.         

For the converse direction, locally bounded Tor-amplitude and pseudo-coherence are local conditions, so we can assume that we are on an affine scheme.  The result then follows from finding a projective resolution.  For the discrete case, this is well known.  For the $\mE_\infty$ case, \cite{EKMM} can be used to obtain the result.

{\it 2.} If $\sF$ is strict pseudo-coherent, then let $F$ be a filtered object associated to $\sF$.  From Proposition \ref{conversion} the morphism $F(-m) \rightarrow \sF$ is a $m$-quasi-isomorphism.  Thus the filtered object
\begin{displaymath}
  F^k(i) = \begin{cases} F(i) & i \leq -k \\ \tau^k(\s{F}) & i > -k \end{cases}
\end{displaymath}
has the right Tor-amplitude properties.  Clearly $\varinjlim_{i} F^k(i) \cong \tau^k \sF$.
\end{proof}

\section{The biCartesian fibration $\QCF$}
\label{bicart}
We begin by implicitly choosing a universe $\mathbb{U}$ for which to work in and let $k$ be a commutative simplicial $\mZ$-algebra. The category of simplicial $\mZ$-modules,  denoted $\mZ\textrm{-mod}_{\Delta}$, is a simplicial symmetric monodial model category.  It is naturally identified with the full subcategory of connective objects in $Sp^\Sigma(s\s{A}b)$ (the symmetric monodial model category of symmetric sequences of simplicial $\mZ$-modules).   By \cite[Proposition 4.3]{shipley} this latter category is strongly monodial Quillen equivalent to $H\mZ$-mod, the symmetric monodial simplicial category of symmetric $H\mZ$-module spectra.  The functor $Sp^\Sigma(s\s{A}b) \to H\mZ$-mod is induced by the forgetful morphism $Ab \to Set_\ast$.  Clearly this preserves the smash product, so one gets a morphism between the model categories of commutative algebra objects: $\mr{CAlg}(Sp^{\Sigma}(\s{A}b)) \to \mr{CAlg}(H\mZ\textrm{-mod})$ (this is not a Quillen equivalence).  

Applying \cite[Theorem 4.4.4.7]{higher_algebra}\footnote{similar to Example 4.4.4.9, loc. cit.} one has an equivalence of $\infty$-categories:
\begin{displaymath}
N((\mr{CAlg}(H\mZ\textrm{-mod}))^{cf}) \cong \mr{CAlg}(N(H\mZ\textrm{-mod}^{cf})^\otimes)
\end{displaymath}
where $N(H\mZ\textrm{-mod}^{cf})^\otimes$ is the natural symmetric monodial $\infty$-category associated to $H\mZ$-mod and $\mr{CAlg}(N(H\mZ\textrm{-mod}^{cf})^\otimes)$ is the $\infty$-category of ring objects associated to the $\infty$-operad $N(\mr{Fin}_\ast)$, i.e the $\infty$-category of $\mathbb{E}_\infty$-$H\mZ$-algebras.  Since 
\begin{displaymath}
\mZ\textrm{-Alg}_{\Delta} := N(\mr{CAlg}(\mZ\textrm{-mod}_{\Delta})^{cf}) \subset  N(\mr{CAlg}(Sp^{\Sigma}(s\s{A}b))^{cf})
\end{displaymath}
by definition (using the notation in \S \ref{parlance}), composition results in a functor from the $\infty$-category of simplicial commutative $\mZ$-algebras to the $\infty$-category of $\mathbb{E}_\infty$-$H\mZ$-algebras.


If we denote the (large) category of symmetric monodial $\infty$-categories by $\widehat{Cat_\infty^\otimes}$, then using  \cite[Section 2.7]{DAGVIII} and \cite[Theorem 4.4.3.1]{higher_algebra}, one defines a functor
\begin{displaymath}
\mr{Mod}: \mr{CAlg}(N(H\mZ\textrm{-mod})^{cf,\otimes}) \to \widehat{Cat_{\infty}^{\otimes}}
\end{displaymath}
which on objects is the assignment $A \to Mod(A)$.  We have a composable sequence of functors 
\begin{align*}
  \kAlg \to \mZ\textrm{-Alg}_{\Delta} \to \mr{CAlg}(N(H\mZ\textrm{-mod})^{cf,\otimes}) \to \widehat{Cat_{\infty}^{\otimes}}
\end{align*}
The composite assigns to $A \in \kAlg$ its symmetric monodial stable $\infty$-category of $A$-modules.  For reasons similar to \cite[Section 2.7]{DAGVIII}, we Kan extend this functor to a limit preserving 
\begin{align*}
  \widehat{QC}^{cart}(k): \mr{Fun}(\kAlg, \s{S})^{\mr{op}} \to \widehat{Cat_{\infty}}
\end{align*}
which associates to a functor $X$, its symmetric monodial $\infty$-category of quasi-coherent sheaves (technically, the monodial structure is extra data that can be added).  To a morphism $f: X \to Y$, the resulting morphism $f^\ast$ is a symmetric monodial functor. 
Passing to the associated Cartesian fibrations and restricting to stacks, we get a Cartesian fibration $\widehat \QC^{cart}(k) \to dSt_{k}$.  
  As mentioned in \cite[Section 2.7]{DAGVIII}, the fibers of this fibration are naturally symmetric monodial $\infty$-categories and the Cartesian lifts of morphisms are symmetric monodial.


Now, let $X \in dSt_{k}$ be a geometric stack.  Using the Cartesian monodial structure on $dSt_{k}$ we obtain a natural functor $dAff_k \xrightarrow{X \times -} dSt_k$ which on $0$-simplices assigns $\spec A \to X \times \spec A$.  Formally, one considers the composition of functors $dAff  \to dAff \times dSt_{k}$ given by $\times X$.  This latter category is equivalent to a subcategory of $(dSt_k^\otimes)_{2}$.  Composition with the the active morphism $(dSt_k^\otimes)_{2} \xrightarrow{\phi_{2,1}} (dSt_k^\otimes)_1$ is the desired functor.  We let $\widetilde{\QC}^{cart}(X)$ be the pullback Cartesian fibration  of $\widehat{\QC}^{cart}(k)$ under this functor.  

We are left with promoting this Cartesian fibration to a biCartesian fibration, i.e., finding a biCartesian fibration $\QCF$ equivalent (as Cartesian fibrations) to $\widetilde{\QC}^{cart}(X)$.  By \cite[Proposition 6.2.3.17]{higher_algebra} it is enough to show that for any $f: A \to B$ of $k$-algebras, $f^\ast: \QC(X \times \spec A) \to \QC(X \times \spec B)$ has a right adjoint.  However, this is clear since $X \times \spec B \to X \times \spec A$ is an affine morphism, i.e., it follows from the affine case.   

Lastly, it will be more convenient to work over $\kAlg$ than $dAff_{k}$.  Since there is a natural correspondence between Cartesian (coCartesian) fibrations over S and coCartesian (Cartesian, respectively) fibrations over $S^{op}$, we will generally think of $\QCF$ as a biCartesian fibration over $\kAlg$.

\bibliography{./biblio}
\bibliographystyle{halpha}
\end{document}